\documentclass[a4paper, 11pt]{article}

\usepackage{amssymb}
\usepackage{amssymb}

\usepackage{amsmath,amssymb,amsthm}
\usepackage[latin1]{inputenc}
\usepackage{version,tabularx,multicol}
\usepackage{graphicx,float,psfrag}
\usepackage{stmaryrd}
\usepackage{color}
\usepackage{mathtools}
\mathtoolsset{showonlyrefs}
\usepackage[pdftex]{hyperref}
\usepackage{amsfonts}
\usepackage{mathrsfs}
\usepackage{geometry}
\usepackage{longtable}
\usepackage[numbers,sort&compress]{natbib}

\newcommand{\Pp}{\mathbb P}

\theoremstyle{plain}
\newtheorem{theorem}{Theorem}[section]

\newtheorem{proposition}[theorem]{Proposition}

\newtheorem{lemma}[theorem]{Lemma}

\newtheorem{definition}[theorem]{Definition}

\newtheorem{remark}{Remark}[section]

\newtheorem{example}{Example}[section]
\newtheorem{assumption}{Assumption}[section]

\makeatletter
\@addtoreset{equation}{section}
\makeatother

\def\beqlb{\begin{eqnarray}}\def\eeqlb{\end{eqnarray}}
\def\beqnn{\begin{eqnarray*}}\def\eeqnn{\end{eqnarray*}}

\newcommand{\bcen}{\begin{center}}
	\newcommand{\ecen}{\end{center}}
\newcommand{\bgeqn}{\begin{equation}}
	\newcommand{\edeqn}{\end{equation}}

\begin{document}
	\title{A large-deviation principle for the empirical distribution of a regular branching random walk}
	\author{Shuxiong Zhang, Yaping Zhu}
	\maketitle
	\renewcommand{\thefootnote}{\fnsymbol{footnote}}

\begin{abstract}
Let \(\{Z_n\}_{n\geq0}\) be a supercritical branching
random walk with deterministic rooted
\(b\)-ary tree, \(b\geq2\), and symmetric displacements satisfying
\(\lim_{x\to+\infty}x^{-\alpha}\log\Pp(X>x)=-\lambda \) with \(\alpha,\lambda>0\). Set $\overline Z_n(\cdot):=Z_n(\cdot)/Z_n(\mathbb{R}),~n\geq0.$ For a
finite union \(A\) of intervals, we establish the following full large-deviation principle: for every Borel set
\(\Gamma\subset[0,1]\),
\begin{align*}
  -\inf_{q\in\Gamma^\circ}Q_A(q)
  &\leq
  \liminf_{n\to\infty}n^{-\alpha/2}
  \log\Pp\!\left(\overline Z_n(\sqrt n\,A)\in\Gamma\right)\\
  &\leq
  \limsup_{n\to\infty}n^{-\alpha/2}
  \log\Pp\!\left(\overline Z_n(\sqrt n\,A)\in\Gamma\right)\\
  &\leq
  -\inf_{q\in\overline\Gamma}Q_A(q),
\end{align*}
where $Q_A$ is a good rate function on $[0,1]$. Meanwhile, we obtain large deviation probabilities for $\{\overline Z_n(\sqrt n\,A)\}_{n\geq1}.$ This strengthens the nonmatching upper and lower
bounds obtained by Chen and He  [Probab. Theory
Related Fields 175 (2019) 255-307]  for regular trees with Weibull
displacements. Our method combines a large-deviation principle for rescaled
displacement tree fields and exponential equivalence.
\end{abstract}
\bigskip
	\noindent Mathematics Subject Classification (2020): Primary 60F10; Secondary 60J80.
	
	\bigskip
	\noindent\textit{Keywords}: Branching random walk; empirical distribution; large deviation principle; tree energy.
\maketitle

\section{Introduction and main results}
\label{sec:introduction}

\subsection{Introduction}
The branching random walk (BRW) \(\{Z_n\}_{n\geq0}\) is a
measure-valued Markov process whose law is determined by an offspring
distribution \(\{p_k\}_{k\geq0}\) and an \(\mathbb{R}\)-valued random variable
\(X\) representing the step size or displacement, which is described formally below.
\par
At generation \(0\), there is one particle located at the origin, so that
\(Z_0=\delta_0\). At generation \(1\), this particle first produces \(N\)
offspring, where $N$ is a random variable with distribution \(\{p_k\}_{k\geq0}\).
Label these children by \(1,\ldots,N\). Child \(i\) then makes a
displacement \(X_i\), where \((X_i)_{i\geq1}\) are independent copies of
\(X\) and are independent of \(N\). This forms the point process
\[
  Z_1:=\sum_{i=1}^{N}\delta_{X_i}.
\]
At every subsequent generation, all particles alive in the preceding
generation repeat this procedure independently from their current positions:
each particle first produces its offspring, and then each child makes an
independent displacement from the position of its parent.

Let \(\mathbb T\) be a
Galton-Watson tree with offspring distribution
\(\{p_k\}_{k\geq0}\) embedded in the Ulam--Harris tree
\(\bigcup_{n\geq0}{\mathbb{N}}^n\), with root \(\varnothing\). For \(u,v\in\mathbb T\), write \(v\prec u\) if
\(v\) is a strict ancestor of \(u\), and let \(|u|\) denote the generation
of \(u\).  Write \(v\preceq u\) if \(v\prec u\) or $v=u$. The variables \(\{X_v:v\in\mathbb T\setminus
\{\varnothing\}\}\) are independent copies of \(X\), and independent of the
genealogical tree. The position of a particle \(u\in\mathbb T\) is
\[
  S_\varnothing:=0,
  \qquad
  S_u:=\sum_{\varnothing\prec v\preceq u}X_v,
  \qquad u\neq\varnothing.
\]
Then, for $n\geq0$,
\[
  Z_n=\sum_{\substack{u\in\mathbb T\\|u|=n}}\delta_{S_u}.
\]
We refer to
Shi~\cite{Shi2015} for a more detailed overview of branching random walks.
\par
Assume $p_0=0$. For $n\geq0$, let
$$\overline Z_n(\cdot)=\frac{Z_n(\cdot)}{Z_n(\mathbb{R})}$$
be the corresponding empirical distribution of $Z_n$.
Biggins~\cite{Biggins1990} proved that if \(\mathbb{E}[X]=0\), \(\mathbb{E}[X^2]=\sigma^2\), $p_0=0$, $p_1<1$ and $\sum_{k\geq1}(k\log k)p_k<\infty$, then for any $A=(-\infty,x],~x\in\mathbb{R}$,
$$
\lim_{n\to\infty}\overline Z_n(\sqrt {n\sigma^2}A)=\nu(A),~\text{a.s.},
$$
where $\nu$ is the standard Gaussian measure. Later, the almost sure convergence rates were obtained by Chen~\cite{Chen2001} and Gao and Liu \cite{GaoLiu2016}.
\par
The natural large-deviation question is therefore to determine the decay rate of
\[
  \mathbb{P}\!\left(\overline Z_n(\sqrt n\,A)\geq p\right),
  \qquad p\in(\nu(A),1).
  \label{eq:intro-event}
\]
Louidor and Perkins~\cite{LouidorPerkins2015} initiated the systematic study
of such empirical-distribution deviations for branching random walks.  If
$X$ is a symmetric two-point random variable and $p_0=p_1=0$, they identified double-exponential decay and distinguished shift
and dilation strategies.  Later, Louidor and Tsairi~\cite{LouidorTsairi2017}
extended this problem to general branching random walks with point-process reproduction laws with bounded spatial support.
\par
Afterwards, Chen and He~\cite{ChenHe2019} treated unbounded displacements and showed that
the tail of the motion interacts decisively with the offspring law. They
obtained results both in the Schr\"oder regime \(p_1>0\) and in the
B\"ottcher regime \(p_0=p_1=0\), for Weibull and other displacement tails. In the B\"ottcher--Weibull setting, let
\[
  b:=\min\{k\geq0:p_k>0\},
  \qquad
  B:=\sup\{k\geq0:p_k>0\}.
\]
They \cite[Theorem~1.3]{ChenHe2019} gave exact large deviation probabilities when \(B>b\), exploiting
offspring randomness so that the descendants of a selected particle can
dominate the terminal population.  However, when \(B=b\), this strategy is unavailable. In that case, \cite[Theorem~1.3(3)]{ChenHe2019} states that if
\begin{align}\label{4he6hyh62}
I_A(p):=\inf\{|x|:\nu(A-x)\geq p\}
\end{align}
is finite and $I_A$ is continuous at $p$ then there exist constants
\(0<c_{\alpha,p,A}<C_{\alpha,p,A}<\infty\) such that
\begin{align}
  -C_{\alpha,p,A}
  &\leq
  \liminf_{n\to\infty}n^{-\alpha/2}
  \log\mathbb{P}\!\left(\overline Z_n(\sqrt n\,A)\geq p\right)
  \notag\\
  &\leq
  \limsup_{n\to\infty}n^{-\alpha/2}
  \log\mathbb{P}\!\left(\overline Z_n(\sqrt n\,A)\geq p\right)
  \leq-c_{\alpha,p,A}.
  \label{eq:CH-regular-bounds}
\end{align}
More recently, Zhang~\cite{Zhang2022} considered
heavy-tailed offspring or displacements, where a single exceptional family
or jump may dominate the event. Above works \cite{ChenHe2019,LouidorPerkins2015,LouidorTsairi2017,Zhang2022} always deal with large-deviation probabilities rather than the large-deviation principle; see Definition \ref{def:LDP} below for its definition.
\par

The present paper aims to address the $b$-regular tree gap (i.e. $B=b$) in \eqref{eq:CH-regular-bounds} and give a large-deviation principle for this problem. The main contributions are as follows.
\begin{enumerate}
  \item We prove a full large-deviation principle (LDP) for the rescaled displacement fields in a
  separable weighted \(\ell^1\)-space (see Theorem \ref{thm:tree-field-LDP} below). Then, by contraction and exponential
  equivalence, a full LDP for \(\{\overline Z_n(\sqrt n\,A)\}_{n\geq1}\) is established (see Theorem \ref{thm:empirical-LDP} below). We also identify the effective domain and regularity of its rate function.
  \item We convert the large-deviation principle into upper and lower
  bounds of the large-deviation probabilities in \eqref{eq:CH-regular-bounds}, and identify the exact logarithmic constant at every continuity point of the rate function. This refines the results of Chen and He \cite[Theorem 1.3(3)]{ChenHe2019}. Moreover, for the rate function, the continuity criterion, explicit limit constants, concrete continuity classes, and jump examples are given in this paper.
\end{enumerate}

\subsection{Main results}
Before stating the theorem, we make the following assumptions.
\begin{assumption}\label{45twzg5yy43}
In the rest of the paper, we always assume the following:\\
1. There exists some \(b\geq2\) such that $p_b=1$ (i.e. every particle has exactly $b$ children);\\
2. \(X\) is symmetric, \(\mathbb{E}[X]=0\), and \(\mathbb{E}[X^2]=1\);\\
3. There exist \(\alpha,\lambda>0\) such that
  \begin{equation}\label{eq:tail}
    \lim_{x\to\infty}x^{-\alpha}\log\mathbb{P}(X>x)=-\lambda;\\
  \end{equation}
 4. There exists some \(r_A\geq1\) such that the set
  \[
    A=\bigcup_{j=1}^{r_A}I_j,
    \qquad I_j\text{ has endpoints }-\infty\leq a_j<b_j\leq\infty,
  \]
  where the intervals \(I_j,~1\leq j\leq r_A,\) are pairwise disjoint and \(A\notin\{\emptyset,\mathbb{R}\}\). The choice of open or closed endpoints does not affect the function $g_A(\cdot):=\nu(A-\cdot)$, and hence does not affect the rate function $Q_A$ defined in \eqref{eq:QA} below.
\end{assumption}
\par
Set
\[
\begin{aligned}
  m_A:=\inf_{x\in\mathbb{R}}g_A(x),~M_A:=\sup_{x\in\mathbb{R}}g_A(x),~D_A:=\{g_A(x):x\in\mathbb{R}\}.
\end{aligned}
\]
Since $g_A$ is continuous, it is simple to see that
\begin{align}\label{4gr5yhy6yr4}
(m_A,M_A)\subset D_A\subset[m_A,M_A].
\end{align}
 \par
Let $\mathbb{T}_b=\cup_{k\geq0}\{1,2,...,b\}^k$ be a $b$-regular Ulam-Harris tree with root \(\varnothing\). Denote by
\[
  \mathcal{H}_b
  :=
  \left\{
    h=(h_v)_{v\in\mathbb{T}_b}:h_\varnothing=0,\quad
    \|h\|_b:=
    \sum_{k\geq1}b^{-k}\sum_{|v|=k}|h_v|<\infty
  \right\}
\]
the {\em weighted tree space}, where we call $h=(h_v)_{v\in\mathbb{T}_b}$ a {\em tree field } or {\em field } in this paper. Let \(\mu_b\) be the probability measure on
\(\bigl(\{1,\ldots,b\},2^{\{1,\ldots,b\}}\bigr)\) defined by
$
\mu_b(\{i\})=b^{-1},~i\in\{1,\ldots,b\}.
$
Set the infinite product probability space
\begin{align}\label{54gt24g5gt}
(\partial\mathbb T_b,\mathcal F_{\partial},\mathbf m_b)
:=
\Big(
\{1,\ldots,b\}^{\mathbb N^+},
\prod_{k\geq1}2^{\{1,\ldots,b\}},
\prod_{k\geq1}\mu_b
\Big),
\end{align}
where $2^{\{1,\ldots,b\}}$ stands for the $\sigma$-algebra $\{A:A\subset\{1,2,...,b\}\}$ and $\mathbb N^+:=\{1,2,...\}$. For \(\xi\in\partial\mathbb T_b\), denote by $\xi|k$ the particle at generation $k$ in $\xi$. For \(h\in\mathcal{H}_b\), define
\begin{align}\label{5j7u7ugt5}
  H_h(\xi):=\sum_{k\geq1}h_{\xi|k},
\end{align}
which is well-defined for $\mathbf m_b$-a.s. $\xi$; see (\ref{eq:path-L1}) below.

\par
For \(h\in\mathcal{H}_b\), set
\begin{align}\label{5yhyr7u5}
  \Psi_A(h)
  :=
  \int_{\partial\mathbb{T}_b}g_A(H_h(\xi))\,\mathbf m_b(\mathrm{d}\xi),
  \qquad
  \mathcal{I_{\alpha}}(h):=\lambda\sum_{v\neq\varnothing}|h_v|^\alpha,
\end{align}
with \(\mathcal{I_{\alpha}}(h)=\infty\) when the series diverges. Hereafter, we call $\mathcal{I_{\alpha}}(h)$  {\em the tree energy} of $h$. Define the rate function
\begin{equation}\label{eq:QA}
  Q_A(q)
  :=
  \inf\{\mathcal{I_{\alpha}}(h):h\in\mathcal{H}_b,\ \Psi_A(h)=q\},
  \qquad q\in[0,1].
\end{equation}
Here and below, the infimum over the empty set is \(\infty\). We point out that if
\(q\in D_A\),  then $\Psi_A(h)=q$ has uncountably many feasible fields; see \eqref{rsgfdgty5y3} below. If \(q\notin D_A\), there is no feasible field; see Proposition~\ref{prop:QA-regularity} below.
\par
Now, we are ready to present our main theorems.

\begin{theorem}\label{thm:empirical-LDP}
\(\{\overline Z_n(\sqrt n\,A)\}_{n\geq1}\) satisfies a full LDP on \([0,1]\), with speed \(n^{\alpha/2}\) and good rate function \(Q_A\). More precisely, for every Borel set
\(\Gamma\subset[0,1]\),
\begin{align*}
  -\inf_{q\in\Gamma^\circ}Q_A(q)
  &\leq
  \liminf_{n\to\infty}n^{-\alpha/2}
  \log\mathbb{P}\!\left(\overline Z_n(\sqrt n\,A)\in\Gamma\right)\\
  &\leq
  \limsup_{n\to\infty}n^{-\alpha/2}
  \log\mathbb{P}\!\left(\overline Z_n(\sqrt n\,A)\in\Gamma\right)\\
  &\leq
  -\inf_{q\in\overline\Gamma}Q_A(q),
\end{align*}
where $\Gamma^\circ$ ($\overline\Gamma$) stands for the interior (closure) of $\Gamma$. Whenever \(Q_A(q)<\infty\), the infimum in \eqref{eq:QA} is attained.
\end{theorem}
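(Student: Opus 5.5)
The strategy is the one announced in the introduction: first prove an LDP for rescaled displacement fields on the weighted tree space $\mathcal H_b$ (Theorem \ref{thm:tree-field-LDP}), then push it forward through the map $\Psi_A$ and control the error by exponential equivalence.

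\textbf{Step 1: the approximating field.} For fixed $n$ and a depth $K$ (eventually $K=K_n\to\infty$ slowly, or a fixed $K$ that is then sent to $\infty$), define the tree field $h^{(n)}$ by $h^{(n)}_v:=X_v/\sqrt n$ for $1\le|v|\le K$ and $h^{(n)}_v:=0$ otherwise, embedding $\mathbb T_b$ into the Galton--Watson tree, which is deterministic here. The large-deviation cost of $h^{(n)}$ is computed from the tail \eqref{eq:tail}. Since the finitely many $X_v$ are independent, $\Pp(X_v/\sqrt n\approx h_v)\approx \exp(-\lambda n^{\alpha/2}|h_v|^\alpha)$ by symmetry. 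This gives speed $n^{\alpha/2}$ and rate $\mathcal I_\alpha$. Goodness of $\mathcal I_\alpha$ in $\|\cdot\|_b$ follows because sublevel sets are compact: a bound on $\sum|h_v|^\alpha$ bounds every coordinate, and the tails $\sum_{k>K}b^{-k}\sum_{|v|=k}|h_v|$ are uniformly small. For $\alpha\le1$ this uses $|h_v|\le C$; for $\alpha>1$ it uses H\"older, with $b^{-k}\cdot b^{k(1-1/\alpha)}$ summable. I take this field LDP as given by Theorem \ref{thm:tree-field-LDP}, with exponential tightness supplied by the Weibull tails.

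\textbf{Step 2: the contraction.} Conditionally on the first $K$ generations, the descendants of each $u$ with $|u|=K$ form an independent BRW of $n-K$ generations. By Biggins' theorem, or more precisely a quantitative concentration statement, the fraction of each subtree's mass in $\sqrt n A$ is close to $g_A(S_u/\sqrt n)$ with probability $1-o(1)$ at a superexponential scale. Averaging over the $b^K$ subtrees, each of equal weight $b^{-K}$ since $p_b=1$, gives
\[
\overline Z_n(\sqrt nA)\approx \int g_A\big(H_{h^{(n)}}(\xi)\big)\,\mathbf m_b(d\xi)=\Psi_A(h^{(n)}).
\]
I would check that $\Psi_A$ is continuous on $(\mathcal H_b,\|\cdot\|_b)$. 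Since $g_A$ is Lipschitz (it is a finite sum of Gaussian distribution differences), we get $|\Psi_A(h)-\Psi_A(h')|\le C\int|H_h-H_{h'}|\,d\mathbf m_b\le C\|h-h'\|_b$. The contraction principle then yields an LDP for $\Psi_A(h^{(n)})$ with rate $Q_A$. It also shows $Q_A$ is good and that the infimum is attained whenever finite, because $\{\mathcal I_\alpha\le c\}\cap\Psi_A^{-1}(q)$ is compact and nonempty.

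\textbf{Step 3: exponential equivalence, the main obstacle.} I must show that for every $\delta>0$,
\[
\limsup_n n^{-\alpha/2}\log\Pp\big(|\overline Z_n(\sqrt nA)-\Psi_A(\tilde h^{(n)})|>\delta\big)=-\infty
\]
for a suitable field $\tilde h^{(n)}$, possibly using all $n$ generations with weights $X_v/\sqrt n$. Two effects must be ruled out at a cost exceeding every multiple of $n^{\alpha/2}$.

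The first effect is deep generations contributing macroscopically. A displacement at depth $k$ affects only a fraction $b^{-k}$ of the mass. To shift a $\delta$ fraction, one would need about $\delta b^{k}$ independent jumps of order $\sqrt n$, costing roughly $\delta b^k n^{\alpha/2}$. Choosing $K$ large makes this arbitrarily costly, and this is where the $b$-regularity (no offspring randomness) is essential. The many small non-Gaussian fluctuations are handled by a truncation at level $\varepsilon\sqrt n$ plus a Bernstein/Hoeffding bound over the $b^n$ paths. The exponential number of particles gives concentration at double-exponential speed, which is far beyond $n^{\alpha/2}$.

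The second effect is the CLT error within subtrees. It is uniform in the starting point because the Berry--Esseen-type error for $g_A$ is uniform, and its deviations are again double-exponentially unlikely.

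Putting the pieces together as an exponentially good approximation (Dembo--Zeitouni, Theorem 4.2.16), with $K\to\infty$ after $n\to\infty$, transfers the LDP to $\overline Z_n(\sqrt nA)$. The rate is the limit of the depth-$K$ rates, which equals $Q_A$ because finite-support fields are dense in energy and $\Psi_A$ is continuous.
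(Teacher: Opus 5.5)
There is a genuine gap in Step 3, and in the concentration claim of Step 2. You flag this step as the main obstacle but do not resolve it.

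\textbf{The fixed-$K$ concentration claim is false.} For fixed $K$, each subtree's fraction in $\sqrt nA$ does not stay within $\delta$ of $g_A(S_u/\sqrt n)$ off an event that is superexponentially small at speed $n^{\alpha/2}$. Shifting the $b$ children of one generation-$K$ particle by $x\sqrt n$ has probability at least $e^{-Cn^{\alpha/2}}$. So each subtree, being itself a $b$-ary branching random walk, exhibits exactly the deviation the theorem is about.

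\textbf{What the $K\to\infty$ framework still requires.} Inside the exponentially-good-approximation framework you only need the fixed-$K$ error rate to blow up as $K\to\infty$. To prove even that, you must separate jumps at generations just beyond $K$ from the fluctuations of the deep generations. For the latter, the available superexponential tool is Lemma~\ref{lem:finite-configuration-concentration}. Its bound $C_1\exp\{-C_2\delta^2|\zeta|\}$ depends only on the number $|\zeta|$ of independent subtrees at the conditioning generation. A Hoeffding bound over the $b^n$ terminal paths is not available, since those paths share ancestors. Double-exponential speed would require linear conditioning depth.

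\textbf{The depth is constrained from both sides.} You need $b^{k_n}/n^{\alpha/2}\to\infty$, i.e.\ $k_n\geq c\log n$ with $c>\alpha/(2\log b)$; a ``slowly growing'' $K_n$ goes in the wrong direction. The depth cannot be linear either. The field must then carry all $k_n$ levels, and $\|Y_n-P_MY_n\|_b$ is typically about $(k_n-M)\mathbb E|X|/\sqrt{n-k_n}$, which diverges for $k_n=\epsilon n$. Using all $n$ generations is wrong outright, because $g_A$ already encodes the Gaussian smoothing of the generations after the cut. Indeed $b^{-n}\sum_{|u|=n}g_A(S_u/\sqrt n)$ typically tends to the mass that the centered Gaussian law of variance $2$ gives to $A$, not to $\nu(A)$.

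\textbf{How the paper handles it.} The missing idea is the logarithmic intermediate scale $k_n=\min\{\lfloor c\log n\rfloor,n-1\}$, with $c$ chosen as in the paper.
\begin{itemize}
\item Conditioning on $\mathcal F_{k_n}$ gives $\mathbb P(|\overline Z_n(\sqrt nA)-R_n|>\delta)\leq C_1e^{-C_2\delta^2b^{k_n}}$ (Lemma~\ref{lem:conditional-concentration}).
\item The uniform CLT replaces $R_n$ by $\Psi_A(Y_n)$ deterministically, with normalization $\sqrt{m_n}=\sqrt{n-k_n}$ (Lemma~\ref{lem:kernel-replacement}).
\item The reduction from $k_n$ levels to finitely many is done on the field side, by an exponential Chebyshev bound using $\mathbb E e^{\theta|X|^\alpha}<\infty$ (Proposition~\ref{prop:exp-approx}). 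For $\alpha<1$ this additionally needs $b^{(1-\alpha)k_n}=o(n^{\alpha/2})$.
\end{itemize}
Your heuristic that a depth-$k$ jump moves only a $b^{-k}$ fraction of the mass is the right intuition for that last estimate. The truncation-plus-Hoeffding sketch does not replace it, though.

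Note also that Theorem~\ref{thm:tree-field-LDP}, which you invoke, is an LDP for exactly this depth-$k_n$ field $Y_n$, not for your fixed-depth field $h^{(n)}$ with normalization $\sqrt n$. Had you used $Y_n$, Step 3 would reduce to the two lemmas just cited. The rest of your Step 2 matches the paper: Lipschitz continuity of $\Psi_A$, contraction, and attainment via compact energy sublevels.
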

\begin{remark}The rate function $Q_A$ is non-negative and $\{p\in[0,1]: Q_A(p)<\infty\}=\{\nu(A-x):x\in\mathbb{R}\}$.  Moreover, $Q_A(\nu(A))=0$ and is strictly decreasing on $(m_A,\nu(A)]$ and strictly increasing on $[\nu(A),M_A)$; for more properties of $Q_A$ see Proposition \ref{prop:QA-regularity} below.
\end{remark}
For \(p\in(\nu(A),M_A)\), since $Q_A$ is strictly increasing on $[\nu(A),M_A)$ (see Proposition \ref{prop:QA-regularity} (ii)), write
\begin{equation}
  Q_A(p+):=\lim_{q\to p+}Q_A(q).
\end{equation}
Next, we present the large deviation probabilities of the empirical distribution.

\begin{theorem}
\label{thm:main-one-sided}
For every $p\in(\nu(A),M_A)$, we have
\begin{align}
  -Q_A(p+)
  &\leq
  \liminf_{n\to\infty}n^{-\alpha/2}
  \log\mathbb{P}\!\left(\overline Z_n(\sqrt n\,A)\geq p\right)
  \notag\\
  &\leq
  \limsup_{n\to\infty}n^{-\alpha/2}
  \log\mathbb{P}\!\left(\overline Z_n(\sqrt n\,A)\geq p\right)
  \leq -Q_A(p).\nonumber
\end{align}
Consequently, if \(Q_A(p+)=Q_A(p)\), then
\[
  \lim_{n\to\infty}n^{-\alpha/2}
  \log\mathbb{P}\!\left(\overline Z_n(\sqrt n\,A)\geq p\right)
  =-Q_A(p).
\]
\end{theorem}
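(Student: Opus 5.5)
Theorem~\ref{thm:main-one-sided} follows from the full LDP in Theorem~\ref{thm:empirical-LDP}, applied with $\Gamma=[p,1]$ for the upper bound and $\Gamma=(q,1]$ with $q>p$ for the lower bound, together with the monotonicity of $Q_A$ recorded in the Remark and in Proposition~\ref{prop:QA-regularity}.

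\emph{Upper bound.} Take $\Gamma=[p,1]$, which is closed in $[0,1]$. The upper bound in Theorem~\ref{thm:empirical-LDP} gives
\[
\limsup_{n\to\infty}n^{-\alpha/2}\log\mathbb{P}\big(\overline Z_n(\sqrt n A)\ge p\big)\le-\inf_{q\in[p,1]}Q_A(q).
\]
We must show that this infimum equals $Q_A(p)$. Split $[p,1]$ into three pieces:
\begin{itemize}
\item On $[p,M_A)$, $Q_A$ is strictly increasing by Proposition~\ref{prop:QA-regularity}(ii), since $p>\nu(A)$. Hence $Q_A\ge Q_A(p)$ there.
\item On $(M_A,1]$, the points lie outside $D_A\subset[m_A,M_A]$, so $Q_A=\infty$ by the effective-domain statement in the Remark.
\item At $q=M_A$, if $M_A\in D_A$, we need $Q_A(M_A)\ge Q_A(p)$. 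Use lower semicontinuity of the good rate function: $Q_A(M_A)\ge \liminf_{q\uparrow M_A}Q_A(q)\ge Q_A(p)$, the last step because $Q_A$ is increasing on $[p,M_A)$.
\end{itemize}
So the infimum is $Q_A(p)$, which gives the upper bound.

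\emph{Lower bound.} For any $q\in(p,M_A)$, the set $\Gamma=(q,1]$ is open in $[0,1]$ and
\[
\{\overline Z_n(\sqrt n A)\ge p\}\supset\{\overline Z_n(\sqrt n A)\in(q,1]\}.
\]
The lower bound in Theorem~\ref{thm:empirical-LDP} gives
\[
\liminf_{n\to\infty} n^{-\alpha/2}\log\mathbb{P}\big(\overline Z_n(\sqrt n A)\ge p\big)\ge-\inf_{r\in(q,1]}Q_A(r)\ge -Q_A(r_0)\quad\text{for any } r_0\in(q,M_A).
\]
Such an $r_0$ has $Q_A(r_0)<\infty$ because $(m_A,M_A)\subset D_A$ by \eqref{4gr5yhy6yr4}. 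Now let $r_0\downarrow p$ (with $q\downarrow p$ as well). By strict monotonicity, the right limit $Q_A(p+)=\lim_{r\downarrow p}Q_A(r)$ exists and is finite, so we obtain $-Q_A(p+)$ as the lower bound.

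\emph{Conclusion.} If $Q_A(p+)=Q_A(p)$, the two bounds coincide and the limit exists and equals $-Q_A(p)$.

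The one delicate step is the endpoint $q=M_A$ in the upper bound. It is handled by lower semicontinuity, which holds because $Q_A$ is a good rate function; alternatively one can use the direct identification of $Q_A$ at $M_A$ in Proposition~\ref{prop:QA-regularity}.
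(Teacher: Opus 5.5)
Your proof is correct and follows the paper's argument: apply Theorem~\ref{thm:empirical-LDP} with $\Gamma=[p,1]$ and then use the monotonicity of $Q_A$ from Proposition~\ref{prop:QA-regularity}. The differences are minor. The paper gets the lower bound in one step from the interior $(p,1]$ of $[p,1]$ in $[0,1]$, instead of passing through $(q,1]$ and letting $q\downarrow p$. Your explicit check of the points in $[M_A,1]$ fills in a detail the paper skips, since it cites monotonicity only on $[\nu(A),M_A)$.
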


\begin{remark}[Continuity and possible jumps of $Q_A(p)$]
\label{rem:intro-continuity}
Example~\ref{cor:concrete-continuity} gives three elementary classes of sets $A$:
a single interval, the complement of a bounded interval, and two symmetric
disjoint intervals of equal length. For each class,
\(Q_A(p+)=Q_A(p)\) for every $p\in(\nu(A),M_A)$, so
Theorem~\ref{thm:main-one-sided} gives an exact logarithmic asymptotic. For general finite unions of intervals,
Theorem~\ref{thm:optimizer-escape} gives necessary and sufficient criteria for \(Q_A(p+)=Q_A(p)\),
while Proposition~\ref{cor:no-suboptimal-maxima} gives a simpler sufficient
condition. We also show that there exists a set $A$ such that \(Q_A(p)\neq Q_A(p+)\); see Example~\ref{prop:genuine-jump} below.
\end{remark}
The definition of the rate function $Q_A$ (see \eqref{eq:QA}) may seem abstract. However, under certain conditions, it has explicit forms; see Theorem~\ref{thm:dimension-reduction} below. For convenience, we restate it as follows.
\begin{remark}
Suppose that, for some \(\sigma\in\{-1,1\}\), the map
\(t\mapsto g_A(\sigma t)\) is strictly increasing on
 \(\mathbb{R}\). Fix $p\in(\nu(A),M_A)$. Then \(Q_A(p+)=Q_A(p)\), and the following assertions hold.
\begin{enumerate}
\item If \(0<\alpha\le1\), then
\begin{equation}
  Q_A(p)
  =\lambda\min\left\{
      \sum_{i=1}^b t_i^\alpha:
      t_i\ge0,\quad
      \frac1b\sum_{i=1}^b g_A(\sigma t_i)\ge p
    \right\}.
\end{equation}
This constraint problem can be solved under certain conditions; see Example \ref{5tre6yy6yu}.

\item If $\alpha\geq1$ and \(t\mapsto g_A(\sigma t)\) is concave on
\([0,\infty)\), then
\[
  Q_A(p)=
  \begin{cases}
    \lambda b I_A(p), & \alpha=1,\\[3pt]
    \lambda\bigl(b^{1/(\alpha-1)}-1\bigr)^{\alpha-1}I_A(p)^\alpha,
      & \alpha>1.
  \end{cases}
\]
\end{enumerate}

\end{remark}

The remainder of the paper is organized as follows. Section \ref{sec:preliminaries}
develops the analytic framework needed for proving the main theorems.
In Section \ref{sec:proofs}, by using an exponentially equivalent representation of $\{\overline Z_n(\sqrt n A)\}_{n\geq1}$,
we establish the LDP through a variational representation of the tree energy, and Theorem \ref{thm:empirical-LDP} is proved there.
Theorem \ref{thm:main-one-sided} is a direct consequence of Theorem \ref{thm:empirical-LDP} and the monotonicity of \(Q_A\).
Section \ref{5yehye6t5t} is devoted to giving several criteria for determining the continuity of $Q_A$. Section \ref{rgrthrt45vg} discusses
the resulting strategies and possible extensions.

\section{Preliminaries}\label{sec:preliminaries}
To prove the main results, we need a state space in which
the tree field can be approximated by finitely many generations.
To this end, we introduce the finite-generation truncations in subsection \ref{4berhhyh6r}. In subsection
\ref{rfvgrte6yr4}, we establish the continuity of \(\Psi_A\)
and the compactness of sublevel sets of \(\mathcal{I_{\alpha}}\). Then, we use these
properties to obtain bounds, lower semicontinuity, and attainment for \(Q_A\).
Finally, we collect standard large-deviation results in subsection \ref{4gtgetg4r}.

\subsection{The finite-generation
truncation}\label{4berhhyh6r}

For every integer \(M\geq1\), define the truncation \(P_M:\mathcal{H}_b\to\mathcal{H}_b\) by
\[
  (P_Mh)_v:=h_v\mathbf{1}_{\{|v|\leq M\}}, {v\in\mathbb{T}_b}
\]
and the finite-dimensional subspace
\begin{equation}\label{eq:EM}
  E_M:=P_M\mathcal{H}_b
  =
  \{h\in\mathcal{H}_b:h_v=0\text{ whenever }|v|>M\}
\end{equation}
of \((\mathcal{H}_b,\|\cdot\|_b)\). As usual, the operator norm is defined as $\|P_M\|:=\sup_{\|h\|_{b}\neq 0}\frac{\|P_Mh\|_b}{\|h\|_b}.$
\par
\begin{proposition}\label{prop:Hb-Banach}
The normed space \((\mathcal{H}_b,\|\cdot\|_b)\) is a separable Banach space.
Moreover, \(\|P_M\|\leq1\) and \(\lim_{M\to\infty}P_Mh=h\) in \(\mathcal{H}_b\) for every
\(h\in\mathcal{H}_b\).
\end{proposition}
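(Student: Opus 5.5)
The plan is to identify \((\mathcal{H}_b,\|\cdot\|_b)\) isometrically with a weighted \(\ell^1\)-space over a countable index set, and then read off completeness, separability and the truncation properties from standard \(\ell^1\) facts. Let \(V:=\mathbb{T}_b\setminus\{\varnothing\}\), which is countable, and put the weight \(w_v:=b^{-|v|}>0\) on \(v\in V\). The map \(h\mapsto (w_vh_v)_{v\in V}\) is a linear bijection from \(\mathcal{H}_b\) onto \(\ell^1(V)\). The inverse multiplies by \(b^{|v|}\) and sets \(h_\varnothing=0\). Because \(\|h\|_b=\sum_{v\in V}w_v|h_v|\), this map is an isometry, so the constraint \(h_\varnothing=0\) causes no trouble.

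\emph{Completeness.} Since \(\ell^1(V)\) is a Banach space and completeness is preserved under isometric isomorphism, \(\mathcal{H}_b\) is complete. If one prefers a direct argument: take a Cauchy sequence \(h^{(n)}\). Each coordinate is Cauchy, because \(|h^{(n)}_v-h^{(m)}_v|\le b^{|v|}\|h^{(n)}-h^{(m)}\|_b\), so it has a coordinatewise limit \(h\). Applying Fatou's lemma to the weighted sums then gives \(h\in\mathcal{H}_b\) and \(\|h^{(n)}-h\|_b\to0\).

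\emph{Separability.} The set \(\mathcal{D}\) of fields with finitely many nonzero coordinates, all rational, is countable. This is because \(\bigcup_M E_M\) is a countable union of finite-dimensional spaces; note that \(E_M\) has dimension \(\sum_{k=1}^M b^k\). Density of \(\mathcal{D}\) follows from the approximation statement below: given \(h\), first choose \(M\) with \(\|P_Mh-h\|_b<\varepsilon\). Then approximate the finitely many coordinates of \(P_Mh\) by rationals, within \(\varepsilon\) in the finite-dimensional norm restricted to \(E_M\).

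\emph{Truncation.} Since \(|(P_Mh)_v|\le|h_v|\) coordinatewise, \(\|P_Mh\|_b\le\|h\|_b\), which gives \(\|P_M\|\le1\). The map \(P_M\) is linear and lands in \(\mathcal{H}_b\), so it is well defined. For convergence,
\[
\|h-P_Mh\|_b=\sum_{k>M}b^{-k}\sum_{|v|=k}|h_v|,
\]
which is the tail of a convergent series and hence tends to \(0\) as \(M\to\infty\).

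No step here is a real obstacle. The only point needing a little care is the completeness argument, namely making sure the limit field lies in \(\mathcal{H}_b\), and the isometry with \(\ell^1(V)\) removes even that.
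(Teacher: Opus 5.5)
Your proof is correct and follows essentially the paper's route. The paper also maps $\mathcal{H}_b$ isometrically onto $\ell^1(\mathbb{T}_b^\circ)$ via $(Th)_v=b^{-|v|}h_v$ to get completeness and separability, then uses $\|P_Mh\|_b\le\|h\|_b$ and the tail of a convergent series for $P_Mh\to h$; the only difference is that you check separability directly with finitely supported rational fields (countable because the rational points of each $E_M$ are countable) rather than quoting it for $\ell^1$.
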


\begin{proof}
The vertex set
$
  \mathbb{T}_b^\circ:=\mathbb{T}_b\setminus\{\varnothing\}=\cup_{k\geq1}\{1,2,...,b\}^k
$
is countable. Set the space
\[
  \ell^1(\mathbb{T}_b^\circ)
  :=\Big\{y=(y_v)_{v\in\mathbb{T}_b^\circ}:
    \|y\|_{\ell^1(\mathbb{T}_b^\circ)}:=\sum_{v\in\mathbb{T}_b^\circ}|y_v|<\infty\Big\},
\]
and the operator
\[
  T:\mathcal{H}_b\longrightarrow\ell^1(\mathbb{T}_b^\circ),
  \qquad
  (Th)_v:=b^{-|v|}h_v.
\]
Then \(T\) is linear and
\[
  \|Th\|_{\ell^1(\mathbb{T}_b^\circ)}
  =
  \sum_{v\in\mathbb{T}_b^\circ}b^{-|v|}|h_v|
  =
  \|h\|_b.
\]
Conversely, for any \(y\in\ell^1(\mathbb{T}_b^\circ)\), the coordinates
\[
  h_v=b^{|v|}y_v,~v\in\mathbb{T}_b^\circ
\]
define an element \(h\in\mathcal{H}_b\) satisfying \(Th=y\).  Thus \(T\) is a
linear isometric bijection.
\par
It is well known that any countably infinite-dimensional $\ell_1$ space is complete and separable. Since \(\mathbb{T}_b^\circ\) is countable, \(\ell^1(\mathbb{T}_b^\circ)\) is complete and separable. The isometry \(T\) transfers completeness and separability to
\(\mathcal{H}_b\).

Finally, by the definition of $P_M$, we have $\|P_Mh\|_b\leq\|h\|_b$, thus  \(\|P_M\|\leq1\). Since $\|h\|_b=\sum_{k\geq1}b^{-k}\sum_{|v|=k}|h_v|<\infty$, it follows that

\[
  \lim_{M\to\infty}\|h-P_Mh\|_b
  =
   \lim_{M\to\infty}\sum_{k>M}b^{-k}\sum_{|v|=k}|h_v|
  =0.
\]

\end{proof}
\par
We point out that completeness of \((\mathcal{H}_b,\|\cdot\|_b)\) is used in Proposition~\ref{prop:energy-good} to turn closed
and totally bounded tree energy sublevels into compact sets.

\subsection{The regularity of $\Psi_A$ and $\mathcal{I_{\alpha}}$}\label{rfvgrte6yr4}

From Assumption \ref{45twzg5yy43} (4),
\[
  A=\bigcup_{j=1}^{r_A}I_j,
\]
where $I_j$ has endpoints $a_j$ and $b_j$ allowing \(a_j=-\infty\) or \(b_j=\infty\). Then,
\begin{equation}\label{eq:gA-endpoints}
  g_A(x)
  =
  \sum_{j=1}^{r_A}
  \bigl[\Phi(b_j-x)-\Phi(a_j-x)\bigr],
\end{equation}
where \(\Phi(x): =\int^{x}_{-\infty}\frac{1}{\sqrt{2\pi}}e^{-\frac{y^2}{2}}\mathrm{d} y,~x\in\mathbb{R}\). Let \(\varphi(x):=\frac{1}{\sqrt{2\pi}}e^{-\frac{x^2}{2}},~x\in\mathbb{R}\), and
\(\varphi(\pm\infty):=0\).  Hence,
\begin{equation}\label{eq:gA-explicit-Lipschitz}
  |g_A'(x)|
  =
  |\sum_{j=1}^{r_A}
  \bigl[\varphi(a_j-x)-\varphi(b_j-x)\bigr]|
  \leq
  2r_A\sup_{y\in \mathbb{R}}\varphi(y)
  =
  \sqrt{\frac{2}{\pi}}\,r_A,
  \quad x\in \mathbb{R}.
\end{equation}
The mean value theorem yields that for any $x,~y\in\mathbb{R}$,
\begin{equation}\label{eq:gA-Lipschitz}
  |g_A(x)-g_A(y)|
  \leq
  \sqrt{\frac{2}{\pi}}\,r_A|x-y|.
\end{equation}
\par

For \(\xi\in\partial\mathbb T_b\) and \(u=(u_1,\ldots,u_k)\in\{1,\ldots,b\}^k\subset\mathbb T^{\circ}_b\), $k\geq1$, define the
tree cylinder
$$
  [u]
  :=
  \left\{
    \xi=(\xi_1,\xi_2,...)\in\partial\mathbb T_b:
    \xi_j=u_j,\ 1\leq j\leq k
  \right\}
  =
  \left\{
    \xi\in\partial\mathbb T_b:\xi|k=u
  \right\}.
$$
By the definition of the product measure $\mathbf m_b$, it follows that
\begin{align}
  \mathbf m_b([u])=
  \prod_{j=1}^{k}\mu_b(\{u_j\})
  \prod_{j>k}\mu_b(\{1,\ldots,b\})
  =
  \left(\frac{1}{b}\right)^k=
  b^{-|u|}.\nonumber
\end{align}
A particle \(u\) at generation
\(|u|=k\) has exactly \(b^{n-k}\) descendants at generation \(n\), where $n>k$. Therefore,
its descendants always represent the fixed fraction \(b^{-k}\) of the total
population. The identity \(\mathbf m_b([u])=b^{-|u|}\) merely records this
deterministic fraction. Since
\(\mathbf m_b([u])=b^{-|u|}\), it follows that
\begin{align}\label{eq:path-L1}
  \int_{\partial\mathbb{T}_b}
  \sum_{k\geq1}|h_{\xi|k}|\,\mathbf m_b(\mathrm{d}\xi)
  &=
  \sum_{k\geq1}\int_{\partial\mathbb{T}_b}
  |h_{\xi|k}|\,\mathbf m_b(\mathrm{d}\xi)\cr
   &=
  \sum_{k\geq1}\sum_{|v|=k}\int_{[v]}
  |h_{v}|\,\mathbf m_b(\mathrm{d}\xi)\cr
  &=
  \sum_{k\geq1}\sum_{|v|=k}
  |h_v|\mathbf m_b([v])\cr
  &=\sum_{k\geq1}b^{-k}\sum_{|v|=k}|h_v|=\|h\|_b.
\end{align}
Thus, by \eqref{5j7u7ugt5}, \(H_h(\xi)\) is absolutely convergent for
\(\mathbf m_b\)-almost every $\xi$ and
\begin{equation}\label{eq:path-map-bound}
  \|H_h\|_{{\ell}^1(\mathbf m_b)}\leq\|h\|_b.
\end{equation}
\par

The following lemma shows that $\Psi_A$ is Lipschitz continuous. Recall from \eqref{5yhyr7u5} that, for \(h\in\mathcal{H}_b\),
\[
  \Psi_A(h)
  =
  \int_{\partial\mathbb{T}_b}g_A(H_h(\xi))\,\mathbf m_b(\mathrm{d}\xi).
\]

\begin{lemma}\label{lem:Psi-Lipschitz}
The functional \(\Psi_A:\mathcal{H}_b\to[0,1]\) is globally Lipschitz: for any $h,~\widetilde h\in\mathcal{H}_b$,
\begin{equation}\label{eq:Psi-Lipschitz}
  |\Psi_A(h)-\Psi_A(\widetilde h)|
  \leq
  \sqrt{\frac{2}{\pi}}\,r_A
  \|h-\widetilde h\|_b.
\end{equation}
\end{lemma}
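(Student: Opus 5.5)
The plan is to combine the pointwise Lipschitz bound \eqref{eq:gA-Lipschitz} for $g_A$ with the path-integral estimate \eqref{eq:path-map-bound}, using linearity of the map $h\mapsto H_h$. First we check that $\Psi_A$ indeed maps into $[0,1]$ and is well defined. By \eqref{eq:path-L1}, for every $h\in\mathcal H_b$ the series $H_h(\xi)$ converges absolutely for $\mathbf m_b$-a.e.\ $\xi$. Moreover $\xi\mapsto H_h(\xi)$ is measurable, being an a.e.\ limit of finite sums of functions that are constant on cylinders. Since $0\le g_A\le 1$ and $g_A$ is continuous, the integral defining $\Psi_A(h)$ exists and lies in $[0,1]$.

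Next, fix $h,\widetilde h\in\mathcal H_b$. On the full-measure set where both series $H_h(\xi)$ and $H_{\widetilde h}(\xi)$ converge absolutely, we can subtract them termwise. This gives
\[
H_h(\xi)-H_{\widetilde h}(\xi)=\sum_{k\ge1}(h-\widetilde h)_{\xi|k}=H_{h-\widetilde h}(\xi).
\]
Since $\mathcal H_b$ is a vector space by Proposition~\ref{prop:Hb-Banach}, the difference $h-\widetilde h$ lies in $\mathcal H_b$, so \eqref{eq:path-map-bound} applies to it.

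Now we estimate:
\[
|\Psi_A(h)-\Psi_A(\widetilde h)|
\le \int_{\partial\mathbb T_b}\bigl|g_A(H_h(\xi))-g_A(H_{\widetilde h}(\xi))\bigr|\,\mathbf m_b(\mathrm d\xi)
\le \sqrt{\tfrac{2}{\pi}}\,r_A\int_{\partial\mathbb T_b}|H_{h-\widetilde h}(\xi)|\,\mathbf m_b(\mathrm d\xi).
\]
The first inequality is the triangle inequality for integrals, and the second is \eqref{eq:gA-Lipschitz} applied pointwise. By \eqref{eq:path-map-bound}, or directly by \eqref{eq:path-L1} after bounding $|H_{h-\widetilde h}(\xi)|\le\sum_{k\ge1}|(h-\widetilde h)_{\xi|k}|$, the last integral is at most $\|h-\widetilde h\|_b$. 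This yields \eqref{eq:Psi-Lipschitz}.

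There is no real obstacle here. The only point needing care is justifying the termwise subtraction $H_h-H_{\widetilde h}=H_{h-\widetilde h}$ almost everywhere. This follows from absolute convergence on a common full-measure set, namely the intersection of the two full-measure sets provided by \eqref{eq:path-L1}.
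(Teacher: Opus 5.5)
Your proposal is correct and follows the paper's own one-line argument: apply \eqref{eq:gA-Lipschitz} pointwise inside the integral, then use \eqref{eq:path-map-bound} for $h-\widetilde h$. Your extra care about measurability and the a.e.\ identity $H_h-H_{\widetilde h}=H_{h-\widetilde h}$ is left implicit in the paper, but it is exactly what that argument relies on.
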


\begin{proof}
By \eqref{eq:gA-Lipschitz} and \eqref{eq:path-map-bound},
\[
  |\Psi_A(h)-\Psi_A(\widetilde h)|
  \leq
  \sqrt{\frac{2}{\pi}}\,r_A
  \int_{\partial\mathbb{T}_b}|H_h(\xi)-H_{\widetilde h}(\xi)|
  \,\mathbf m_b(\mathrm{d}\xi)
  \leq
  \sqrt{\frac{2}{\pi}}\,r_A\|h-\widetilde h\|_b.
\]
\end{proof}
The following definition comes from \cite[p. 4]{DemboZeitouni2010}.
\begin{definition}\label{54thytr7gt5}
A function \(I:E\to[0,\infty]\) on a metric space \(E\) is {\em lower
semicontinuous} if the sublevel set \(\{x\in E:I(x)\leq L\}\) is closed for every $L\geq0$.
Equivalently, $I$ is lower semicontinuous if and only if for every sequence $\{x_n\}_{n\geq1}\subset E$ converging to $x\in E$, it follows that $\liminf_{n\to\infty}I(x_n)\geq I(x).$ We call $I$ a {\em rate function} if it is lower semicontinuous.
Moreover, the rate function is {\em good} if \(\{x\in E:I(x)\leq L\}\) is compact for every $L\geq0$.
\end{definition}
Recall that for $h\in\mathcal{H}_b$,
\[
\mathcal{I_{\alpha}}(h):=\lambda\sum_{v\neq\varnothing}|h_v|^\alpha.
\]
The following proposition shows that $\mathcal{I_{\alpha}}$ is a good rate function, and therefore its sublevels are compact.
\begin{proposition}
\label{prop:energy-good}
 \(\mathcal{I_{\alpha}}:\mathcal{H}_b\to[0,\infty]\) is a good rate function on \((\mathcal{H}_b,\|\cdot\|_b)\).
\end{proposition}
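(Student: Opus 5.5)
Since $(\mathcal H_b,\|\cdot\|_b)$ is a Banach space by Proposition~\ref{prop:Hb-Banach}, compactness of a sublevel set $K_L:=\{h\in\mathcal H_b:\mathcal I_\alpha(h)\le L\}$ follows once $K_L$ is closed and totally bounded. Closedness gives lower semicontinuity, so goodness follows as well. The plan is therefore to prove these two properties separately for every fixed $L\ge0$.

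\emph{Closedness.} Let $h^{(n)}\in K_L$ with $\|h^{(n)}-h\|_b\to0$. For each fixed vertex $v\neq\varnothing$ we have $|h^{(n)}_v-h_v|\le b^{|v|}\|h^{(n)}-h\|_b\to0$, so the convergence is coordinatewise. For any finite set $F\subset\mathbb T_b^\circ$ this gives
\[
\lambda\sum_{v\in F}|h_v|^\alpha=\lim_{n\to\infty}\lambda\sum_{v\in F}|h^{(n)}_v|^\alpha\le L,
\]
since $x\mapsto|x|^\alpha$ is continuous. Taking the supremum over $F$ yields $\mathcal I_\alpha(h)\le L$. This is just Fatou's lemma for counting measure.

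\emph{Total boundedness.} Fix $\varepsilon>0$. The argument splits $\|h\|_b$ into a finite part and a uniformly small tail.

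\textbf{Coordinate bound.} For $h\in K_L$, every coordinate satisfies $|h_v|^\alpha\le L/\lambda$, so $|h_v|\le (L/\lambda)^{1/\alpha}=:R$.

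\textbf{Tail bound.} This is the step that uses the tree energy rather than only coordinate bounds, and it is the main obstacle. Generation $k$ has $b^k$ vertices, so
\[
b^{-k}\sum_{|v|=k}|h_v|\le b^{-k}\cdot b^k\cdot R=R,
\]
which is not summable in $k$. The fix is to split each coordinate at a small threshold $\delta>0$. Large values are controlled by the energy: $|h_v|\le R^{1-\alpha}|h_v|^\alpha$ when $\alpha\le1$, and $|h_v|\le\delta^{1-\alpha}|h_v|^\alpha$ when $\alpha>1$ and $|h_v|\ge\delta$. Small values are controlled by the weights: those with $|h_v|<\delta$ contribute at most $b^{-k}\cdot b^k\delta=\delta$ per generation, which is still not summable over infinitely many generations.

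\textbf{Handling $\alpha>1$.} In this case one needs $\sum_{k>M}b^{-k}\sum_{|v|=k}|h_v|$ small uniformly on $K_L$. Apply Hölder to each generation:
\[
b^{-k}\sum_{|v|=k}|h_v|\le \Big(b^{-k}\sum_{|v|=k}|h_v|^\alpha\Big)^{1/\alpha}\le\big(b^{-k}L/\lambda\big)^{1/\alpha},
\]
which is summable in $k$. For $\alpha\le1$ the estimate is simpler: $\sum_{|v|=k}|h_v|\le\big(\sum_{|v|=k}|h_v|^\alpha\big)^{1/\alpha}\le(L/\lambda)^{1/\alpha}$, because the $\ell^\alpha$ quasi-norm dominates the $\ell^1$ norm when $\alpha\le1$. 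Hence $b^{-k}\sum_{|v|=k}|h_v|\le b^{-k}(L/\lambda)^{1/\alpha}$.

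\textbf{Conclusion.} In both cases $\sup_{h\in K_L}\|h-P_Mh\|_b\to0$ as $M\to\infty$, at a geometric rate. Choose $M$ with this supremum below $\varepsilon/2$. The set $P_MK_L$ lies in the finite-dimensional space $E_M$ and is bounded there, since every coordinate is at most $R$. It can therefore be covered by finitely many $\varepsilon/2$-balls. Together with the tail bound, this gives a finite $\varepsilon$-net for $K_L$, so $K_L$ is totally bounded and hence compact.
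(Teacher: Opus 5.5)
Your proof is correct and follows essentially the paper's route: closedness of $K_L$ via coordinatewise (finite-truncation) limits, a uniform tail bound $\sup_{h\in K_L}\|h-P_Mh\|_b\to0$ from the energy, total boundedness of the bounded finite-dimensional set $P_MK_L$, and completeness of $\mathcal H_b$. The differences are cosmetic: for $\alpha>1$ you use a per-generation power-mean inequality where the paper applies H\"older once over $\{|v|>M\}$. Also, the threshold-$\delta$ digression in your tail paragraph is a dead end and can simply be deleted.
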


\begin{proof}
Fix $L\geq0$. For \(M\geq1\), the finite-coordinate function
\[
  h\longmapsto
  \lambda\sum_{1\leq|v|\leq M}|h_v|^\alpha
\]
is continuous on \(\mathcal{H}_b\). Thus,
\begin{align}\label{regeythy4}
\Big\{h\in\mathcal{H}_b: \lambda\sum_{1\leq|v|\leq M}|h_v|^\alpha\leq L\Big\}
\end{align}
 is closed. Since
\begin{align}\label{regwwrth4}
\{h\in\mathcal{H}_b:\mathcal{I_{\alpha}}(h)\leq L\}=\bigcap^{\infty}_{M=1}\Big\{h\in\mathcal{H}_b: \lambda\sum_{1\leq|v|\leq M}|h_v|^\alpha\leq L\Big\},
\end{align}
we get that $\{h:\mathcal{I_{\alpha}}(h)\leq L\}$ is also closed. Thus, by Definition \ref{54thytr7gt5}, $\mathcal{I_{\alpha}}$ is lower semicontinuous.
\par
 It remains to show \(K_L:=\{h\in\mathcal{H}_b:\mathcal{I_{\alpha}}(h)\leq L\}\) is compact.  We claim that
 $$\lim_{M\to\infty} \|h-P_Mh\|_b=0$$
uniformly for \(h\in K_L\). In fact, if
\(0<\alpha\leq1\), then for \(h\in K_L\),
\begin{align}\label{they6yt5t}
  \|h-P_Mh\|_b
  &\leq
  b^{-(M+1)}
  \sum_{|v|>M}|h_v|\cr
  &\leq  b^{-(M+1)}\left(\sum_{v\neq \varnothing}|h_v|^{\alpha}\right)^{1/\alpha}
  \leq
  b^{-(M+1)}(L/\lambda)^{1/\alpha}.
\end{align}
For \(\alpha>1\), by H\"older's inequality, it follows that
\begin{align}\label{ookioq}
  \|h-P_Mh\|_b
  &\leq
  \left(\sum_{|v|>M}|h_v|^\alpha\right)^{1/\alpha}
  \left(\sum_{|v|>M}b^{-\alpha'|v|}\right)^{1/\alpha'}\cr
  &\leq
  (L/\lambda)^{1/\alpha}
  \left(\sum_{k>M}b^{-k/(\alpha-1)}\right)^{(\alpha-1)/\alpha},
\end{align}
where \(\alpha'=\alpha/(\alpha-1)\). Both bounds in \eqref{they6yt5t} and \eqref{ookioq} converge to zero as \(M\to\infty\), uniformly in \(h\in K_L\). Fix \(\varepsilon>0\). Thus, there exists \(M\geq1\) such that
\begin{align}\label{ergfbfg6y}
\sup_{h\in K_L}\|h-P_Mh\|_b<\frac{\varepsilon}{2}.
\end{align}
Recall that the finite-dimensional space \(E_M=\{h\in\mathcal{H}_b:h_v=0\text{ whenever }|v|>M\}\). Similar to \eqref{regeythy4},
$$P_MK_L=\Bigg\{g\in E_M: \lambda\sum_{1\leq|v|\leq M}|g_v|^{\alpha}\leq L\Bigg\}$$
is closed in \(E_M\). Moreover, observe that $P_MK_L\subset\{g\in E_M: \|g\|_b\leq M(\frac{L}{\lambda})^{1/\alpha}\}$. Thus, $P_MK_L$ is totally bounded and closed in \(E_M\), and hence compact.  Therefore, there exist \(g_1,\ldots,g_N\in E_M\) such that
\[
P_MK_L
\subset
\bigcup_{j=1}^{N}
B_b\left(g_j,\frac{\varepsilon}{2}\right),
\]
where
\[
B_b(g,r):=\{f\in\mathcal H_b:\|f-g\|_b<r\}.
\]
For every \(h\in K_L\), choose \(j\in\{1,\ldots,N\}\) such that
\[
\|P_Mh-g_j\|_b<\frac{\varepsilon}{2}.
\]
This, combined with \eqref{ergfbfg6y}, yields that
\[
\|h-g_j\|_b
\leq
\|h-P_Mh\|_b+\|P_Mh-g_j\|_b
<\varepsilon.
\]
Therefore,
\[
K_L\subset\bigcup_{j=1}^{N}B_b(g_j,\varepsilon),
\]
and \(K_L\) is totally bounded. Moreover, \(K_L\) is closed by \eqref{regwwrth4}, and \(\mathcal H_b\) is complete by Proposition~2.1. Thus \(K_L\) is compact.
\end{proof}
\par
Set
\begin{align}\label{rggbtht5}
  \kappa_{\alpha,b}
  :=
  \begin{cases}
    b^\alpha,&0<\alpha\leq1,\\[0.3em]
    \bigl(b^{1/(\alpha-1)}-1\bigr)^{\alpha-1},&\alpha>1.
  \end{cases}
\end{align}
The following proposition gives explicit bounds for $Q_A$ and justifies its attainment. Recall that
\begin{align}\label{5tgtwg5gyh}
  Q_A(p)
  =
  \inf\Big\{\lambda\sum_{v\neq\varnothing}|h_v|^\alpha:h\in\mathcal{H}_b,\ \Psi_A(h)=p\Big\},
  \qquad p\in[0,1].
\end{align}
\begin{proposition}
\label{prop:cost-bounds}
(i) For every \(p\in(\nu(A),M_A)\),
\begin{equation}\label{eq:explicit-Q-bounds}
  \lambda\kappa_{\alpha,b}
  \left(
    \frac{p-\nu(A)}
    {\sqrt{2/\pi}\,r_A}
  \right)^\alpha
  \leq
  Q_A(p)
  \leq
  \lambda b\,I_A(p)^\alpha
  <\infty.
\end{equation}
(ii) $Q_A$ is lower semicontinuous on $[0,1]$.\\
(iii) For $q\in[0,1]$, if \(Q_A(q)<\infty\), then the infimum defining \(Q_A(q)\) is
attained. Moreover, for \(p\in(\nu(A),M_A)\), the closed-constraint problem
\[
  \inf\{\mathcal{I_{\alpha}}(h):\Psi_A(h)\geq p\}
\]
is also attained.
\end{proposition}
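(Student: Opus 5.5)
The plan is to treat the three parts separately. Throughout, the two earlier results we use are the Lipschitz bound of Lemma~\ref{lem:Psi-Lipschitz} and the compactness of the sublevel sets $K_L=\{\mathcal{I_{\alpha}}\le L\}$ from Proposition~\ref{prop:energy-good}.

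\emph{Upper bound in (i).} Fix $p\in(\nu(A),M_A)$. Since $p<M_A$, some $x$ has $g_A(x)>p$, so $I_A(p)<\infty$. By continuity of $g_A$, the set $\{x:g_A(x)\ge p\}$ is closed, so the infimum in \eqref{4he6hyh62} is attained at some $x^*$ with $|x^*|=I_A(p)$. Note $x^*\neq0$ because $g_A(0)=\nu(A)<p$. Apply the intermediate value theorem on the segment between $0$ and $x^*$: it gives a point $y$ with $g_A(y)=p$ and $|y|\le|x^*|$. Minimality of $|x^*|$ then forces $|y|=I_A(p)$. Define the field $h_v:=y$ for $|v|=1$ and $h_v:=0$ otherwise. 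Then $H_h\equiv y$, so $\Psi_A(h)=g_A(y)=p$ and $\mathcal{I_{\alpha}}(h)=\lambda b|y|^\alpha=\lambda bI_A(p)^\alpha$.

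\emph{Lower bound in (i).} Take any $h$ with $\Psi_A(h)=p$. Since $\Psi_A(0)=g_A(0)=\nu(A)$, Lemma~\ref{lem:Psi-Lipschitz} gives
$$p-\nu(A)\le\sqrt{2/\pi}\,r_A\|h\|_b.$$
It therefore suffices to show $\sum_{v\ne\varnothing}|h_v|^\alpha\ge\kappa_{\alpha,b}\|h\|_b^\alpha$.
\begin{itemize}
\item For $0<\alpha\le1$: $\|h\|_b\le b^{-1}\sum|h_v|\le b^{-1}\bigl(\sum|h_v|^\alpha\bigr)^{1/\alpha}$, the last step by subadditivity of $t\mapsto t^\alpha$. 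This gives the constant $b^\alpha$.
\item For $\alpha>1$: apply H\"older exactly as in \eqref{ookioq}, but summing over all generations $k\ge1$. This gives $\|h\|_b\le\bigl(\sum|h_v|^\alpha\bigr)^{1/\alpha}S^{1/\alpha'}$ with
$$S=\sum_{k\ge1}b^k b^{-k\alpha'}=\sum_{k\ge1}b^{-k/(\alpha-1)}=\bigl(b^{1/(\alpha-1)}-1\bigr)^{-1}.$$
Raising to the power $\alpha$ yields the constant $\bigl(b^{1/(\alpha-1)}-1\bigr)^{\alpha-1}$.
\end{itemize}
Taking the infimum over feasible $h$ gives the left inequality in \eqref{eq:explicit-Q-bounds}.

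\emph{Parts (ii) and (iii).} Both follow from one compactness argument.
\begin{itemize}
\item For (ii), let $q_n\to q$ with $\liminf_n Q_A(q_n)<L$; pass to a subsequence along which $Q_A(q_n)<L$. Choose $h_n$ with $\Psi_A(h_n)=q_n$ and $\mathcal{I_{\alpha}}(h_n)\le L$. These lie in the compact set $K_L$, so a further subsequence converges to some $h$. Lower semicontinuity of $\mathcal{I_{\alpha}}$ gives $\mathcal{I_{\alpha}}(h)\le L$, and continuity of $\Psi_A$ gives $\Psi_A(h)=q$. Hence $Q_A(q)\le L$, and since $L$ is arbitrary, $Q_A(q)\le\liminf_n Q_A(q_n)$.
\item For the first claim of (iii), take the constant sequence $q_n=q$ and a minimizing sequence $h_n$. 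The same compactness and lower semicontinuity argument produces a minimizer.
\item For the closed-constraint problem, the set $\{\Psi_A\ge p\}$ is closed by Lemma~\ref{lem:Psi-Lipschitz}, and it is nonempty by the construction in (i). Its intersection with a sublevel set $K_L$ (for $L$ above the infimum) is compact and nonempty. The lower semicontinuous function $\mathcal{I_{\alpha}}$ attains its minimum on this compact set.
\end{itemize}

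The only mildly delicate step is the upper bound. The constraint in \eqref{eq:QA} is the equality $\Psi_A(h)=p$, so we need a point with $g_A(y)=p$ exactly and $|y|=I_A(p)$. This is handled by the intermediate value argument above, which uses $g_A(0)=\nu(A)<p$.
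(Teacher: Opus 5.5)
Your proof is correct and follows essentially the same route as the paper. The lower bound combines the Lipschitz bound for $\Psi_A$ with the comparison between $\sum_{v\neq\varnothing}|h_v|^\alpha$ and $\|h\|_b$ (subadditivity for $\alpha\le 1$, H\"older for $\alpha>1$), the upper bound uses a constant first-generation field, and (ii)--(iii) come from compactness of the energy sublevel sets plus continuity of $\Psi_A$; your intermediate-value argument just makes explicit the existence of $y$ with $g_A(y)=p$ and $|y|=I_A(p)$, which the paper asserts directly from continuity of $g_A$.
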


\begin{proof}We first prove the lower bound in \eqref{eq:explicit-Q-bounds}.
If $h\in\mathcal{H}_b$ and \(\Psi_A(h)=p\), Lemma~\ref{lem:Psi-Lipschitz} gives
\begin{align}\label{5yhyh6t5}
  |p-\nu(A)|=|\Psi_A(h)-\Psi_A(\mathbf{0})|
  \leq
  \sqrt{\frac{2}{\pi}}\,r_A\|h\|_b,
\end{align}
where $\mathbf{0}$ stands for the zero field (i.e. $\mathbf{0}_v=0$ for any $v\in\mathbb{T}_b$).
If \(0<\alpha\leq1\), then
\[
  \|h\|_b
  \leq
  b^{-1}\sum_{v\neq\varnothing}|h_v|
  \leq
  b^{-1}
  \left(\sum_{v\neq\varnothing}|h_v|^\alpha\right)^{1/\alpha}.
\]
This, together with \eqref{5yhyh6t5} and \eqref{rggbtht5}, entails that if \(0<\alpha\leq1\), then for $h\in\mathcal{H}_b$ and \(\Psi_A(h)=p\),
\begin{align}\label{4gtre55t}
\lambda\left(\sum_{v\neq\varnothing}|h_v|^\alpha\right)\geq \lambda b^{\alpha}(\|h\|_b)^{\alpha}\geq \lambda\kappa_{\alpha,b}
  \left(
    \frac{p-\nu(A)}
    {\sqrt{2/\pi}\,r_A}
  \right)^\alpha.
\end{align}
Then, by the definition of $Q_A$ defined in \eqref{5tgtwg5gyh}, we get that for \(0<\alpha\leq1\),
\begin{align}\label{retgbtr5gth}
Q_A(p)\geq\lambda\kappa_{\alpha,b}
  \left(
    \frac{p-\nu(A)}
    {\sqrt{2/\pi}\,r_A}
  \right)^\alpha.
\end{align}
If \(\alpha>1\), H\"older's inequality gives
\[
  \|h\|_b
  \leq
  \left(\sum_{v\neq\varnothing}|h_v|^\alpha\right)^{1/\alpha}
  \left(\sum_{k\geq1}b^{-k/(\alpha-1)}\right)^{(\alpha-1)/\alpha}.
\]
Then, using similar arguments to \eqref{4gtre55t}-\eqref{retgbtr5gth}, we obtain the first inequality in
\eqref{eq:explicit-Q-bounds} for \(\alpha>1\).
\par
We proceed to deal with the upper bound in (i). For every \(p\in(\nu(A),M_A)\), by \eqref{4gr5yhy6yr4}, the continuity of $g_A$ and \eqref{4he6hyh62}, we can choose \(x\) such that \(g_A(x)=p\) and $|x|=I_A(p)$. Set $h_v=x$ for $|v|=1$ and $h_v=0$ for $|v|>1$. In this setting, we have $\Psi_A(h)=p$ and $\mathcal{I_{\alpha}}(h)=\lambda b|x|^\alpha$.  This entails that
\begin{align}\label{tryj66tyj}
  Q_A(p)\leq\lambda b I_A(p)^\alpha.
\end{align}
The latter quantity is finite because \(p<M_A\).
\par

Next, we show that $Q_A$ is lower semicontinuous. By Definition \ref{54thytr7gt5}, it suffices to show that for any $L\geq0$,
\[
 \{q\in[0,1]:Q_A(q)\leq L\}
\]
is closed. Let $q_n\in\{q\in[0,1]:Q_A(q)\leq L\},~n\geq1$ and $\lim_{n\to\infty}q_n=q^*$.
By the definition of \(Q_A(q_n)\), there exists
\(h_n\in\mathcal{H}_b\) such that
\[
  \Psi_A(h_n)=q_n,
  \qquad
  \mathcal{I_{\alpha}}(h_n)\leq Q_A(q_n)+\frac1n
  \leq L+\frac1n.
\]
Hence, for any $n\geq1$,
\[
  h_n\in\{h\in\mathcal{H}_b:\mathcal{I_{\alpha}}(h)\leq L+1\}.
\]
This set is compact by Proposition~\ref{prop:energy-good}. Thus, there exists $h^*\text{in }\mathcal{H}_b$ such that along a subsequence $\{n_j\}_{j\geq1}$,
\begin{align}\label{4tgty6gr4tyr4}
  \lim_{j\to\infty}h_{n_j}=h^*.
\end{align}
The continuity of \(\Psi_A\) (see Lemma \ref{lem:Psi-Lipschitz}) and the lower semicontinuity of \(\mathcal{I_{\alpha}}\)
give
\[
  \Psi_A(h^*)
  =\lim_{j\to\infty}\Psi_A(h_{n_j})
  =\lim_{j\to\infty}q_{n_j}
  =q^*
\]
and
\[
  \mathcal{I_{\alpha}}(h^*)
  \leq\liminf_{j\to\infty}\mathcal{I_{\alpha}}(h_{n_j})
  \leq L.
\]
Consequently,
\[
  Q_A(q^*)\leq\mathcal{I_{\alpha}}(h^*)\leq L.
\]
This shows that $q^*\in\{q\in[0,1]:Q_A(q)\leq L\}$, which means $\{q\in[0,1]:Q_A(q)\leq L\}$ is closed.
\par
We now justify attainment in (iii). If \(Q_A(q)<\infty\), then there exists \((h_n)_{n\geq1}\subset\{h\in\mathcal{H}_b:\ \Psi_A(h)=q\}\) such that $\lim_{n\to\infty}\mathcal{I_{\alpha}}(h_n)=Q_A(q)$ . Thus, there exists $N$ such that for $n>N$, $(h_n)_{n\geq N}\subset\{h\in\mathcal{H}_b: \mathcal{I_{\alpha}}(h)\leq Q_A(q)+1\}$.
By Proposition~\ref{prop:energy-good}, $\{h\in\mathcal{H}_b: \mathcal{I_{\alpha}}(h)\leq Q_A(q)+1\}$ is compact. Hence, there exists a subsequence \((h_{n_k})_{k\geq1}\) that converges in \(\mathcal{H}_b\) to some \(h\).  Continuity of \(\Psi_A\)
gives \(\Psi_A(h)=q\), and lower semicontinuity of \(\mathcal{I_{\alpha}}\) gives
\[
  \mathcal{I_{\alpha}}(h)\leq\liminf_{k\to\infty}\mathcal{I_{\alpha}}(h_{n_k})=Q_A(q).
\]
Thus the infimum defining $Q_A(q)$ is attained at $h$. The attainment of $\inf\{\mathcal{I_{\alpha}}(h):\Psi_A(h)\geq p\}$ follows from a similar argument.
\end{proof}

\subsection{Standard large-deviation tools}\label{4gtgetg4r}

We collect several standard large-deviation tools from Dembo and
Zeitouni~\cite{DemboZeitouni2010}. They are recorded here to fix the precise
forms needed below. Let \((E,d)\) be a metric space. The following definition comes from \cite[p. 5, p. 7]{DemboZeitouni2010}.
\begin{definition}[Large-deviation principles]
\label{def:LDP}
Let \(\{V_n\}_{n\geq1}\) be \(E\)-valued Borel
random elements and $\{a_n\}_{n\geq1}$ be a sequence tending to infinity. A sequence
\(\{V_n\}_{n\geq1}\) satisfies a \emph{(full) large-deviation principle}
on \(E\), with speed \(a_n\) and rate function \(I\), if, for every
Borel set \(\Gamma\subset E\),
\begin{align}
  -\inf_{x\in\Gamma^\circ}I(x)
  &\leq
  \liminf_{n\to\infty}a_n^{-1}
  \log\mathbb{P}(V_n\in\Gamma)
  \notag\\
  &\leq
  \limsup_{n\to\infty}a_n^{-1}
  \log\mathbb{P}(V_n\in\Gamma)
  \leq
  -\inf_{x\in\overline\Gamma}I(x).\nonumber
\end{align}
It satisfies a \emph{weak large-deviation principle} if
\begin{align}
  \liminf_{n\to\infty}a_n^{-1}\log\mathbb{P}(V_n\in G)
  &\geq-\inf_{x\in G}I(x)
  \label{eq:weak-LDP-lower}
\end{align}
for every open set \(G\subset E\), and
\begin{align}
  \limsup_{n\to\infty}a_n^{-1}\log\mathbb{P}(V_n\in K)
  &\leq-\inf_{x\in K}I(x)
  \label{eq:weak-LDP-upper}
\end{align}
for every compact set \(K\subset E\).
\end{definition}
The following definition comes from \cite[p. 8]{DemboZeitouni2010}.
\begin{definition}
The sequence \(\{V_n\}_{n\geq1}\) is \emph{exponentially tight} at
speed \(a_n\) if, for every \(L>0\), there exists a compact set
\(K_L\subset E\) such that
\begin{equation}\label{eq:exponential-tightness}
  \limsup_{n\to\infty}a_n^{-1}
  \log\mathbb{P}(V_n\notin K_L)
  \leq-L.
\end{equation}
\end{definition}

In the following, we recall the standard upgrade from a weak LDP to a full LDP;
see \cite[Lemma~1.2.18]{DemboZeitouni2010}.

\begin{lemma}[Weak LDP and exponential tightness]
\label{lem:weak-to-full}
If a sequence satisfies a weak LDP at speed \(a_n\) and is exponentially
tight at that speed, then it satisfies a full LDP and its rate function is
good.
\end{lemma}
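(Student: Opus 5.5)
This lemma is the standard result \cite[Lemma~1.2.18]{DemboZeitouni2010}, and I would follow its proof. Let $\{V_n\}_{n\geq1}$ satisfy a weak LDP with rate function $I$ at speed $a_n$, and be exponentially tight. The lower bound for open sets is part of the weak LDP, and for a general Borel $\Gamma$ it follows by applying it to $\Gamma^\circ\subset\Gamma$. Two things remain: the upper bound for closed sets, which gives the upper bound for Borel $\Gamma$ via $\Gamma\subset\overline\Gamma$, and goodness of $I$.

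\textbf{Closed upper bound.} Fix a closed $F\subset E$ and $L>0$, and let $K_L$ be the compact set from \eqref{eq:exponential-tightness}. Then
\[
\mathbb P(V_n\in F)\leq \mathbb P(V_n\in F\cap K_L)+\mathbb P(V_n\notin K_L).
\]
The set $F\cap K_L$ is compact, so \eqref{eq:weak-LDP-upper} gives a decay rate $-\inf_{F\cap K_L}I\leq-\inf_F I$. The standard bound $\limsup a_n^{-1}\log(x_n+y_n)=\max(\limsup a_n^{-1}\log x_n,\limsup a_n^{-1}\log y_n)$ then yields
\[
\limsup_{n\to\infty} a_n^{-1}\log\mathbb P(V_n\in F)\leq -\min\Big(\inf_F I,\,L\Big).
\]
Letting $L\to\infty$ gives the upper bound.

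\textbf{Goodness of $I$.} This is the only mildly delicate point. I will show that for every $\alpha\geq0$ the sublevel set $\{I\leq\alpha\}$ lies inside $K_L$ for some $L>\alpha$. Since the sublevel set is closed by lower semicontinuity, it is then a closed subset of a compact set, hence compact.

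Take $L>\alpha$ and apply the weak lower bound \eqref{eq:weak-LDP-lower} to the open set $K_L^c$:
\[
-\inf_{K_L^c}I\leq\liminf_{n\to\infty} a_n^{-1}\log\mathbb P(V_n\in K_L^c)\leq -L.
\]
Hence $I>\alpha$ on $K_L^c$, that is, $\{I\leq\alpha\}\subset K_L$, as required.
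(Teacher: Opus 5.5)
Your proof is correct, and it is exactly the standard argument of \cite[Lemma~1.2.18]{DemboZeitouni2010}, which the paper cites without reproducing: you get the closed upper bound by splitting on the exponentially tight compact $K_L$, and goodness by applying the weak lower bound to the open set $K_L^c$, which traps every sublevel set inside a compact set. The only point to tidy is cosmetic: rename your sublevel parameter, since $\alpha$ is already the Weibull exponent in this paper.
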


The following open-ball criterion is copied from
\cite[Theorem~4.1.11]{DemboZeitouni2010}. Let $B(x,\delta):=\{y\in E: d(y,x)<\delta\}$ be the open-ball centered at $x\in E$ with radius $\delta>0$.

\begin{lemma}[Local LDP criterion]\label{lem:local-LDP}
Let \(\{V_n\}_{n\geq1}\) be random variables in a metric space \(E\). If for every
\(x\in E\),
\begin{align*}
  -I(x)
  &=
  \lim_{\delta\to0+}\liminf_{n\to\infty}
  a_n^{-1}\log\mathbb{P}(V_n\in B(x,\delta))\\
  &=
  \lim_{\delta\to0+}\limsup_{n\to\infty}
  a_n^{-1}\log\mathbb{P}(V_n\in B(x,\delta)),
\end{align*}
then \(\{V_n\}_{n\geq1}\) satisfies a weak LDP with rate function \(I\).
\end{lemma}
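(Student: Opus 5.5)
The plan is to verify the two halves of the weak LDP in Definition~\ref{def:LDP} directly from the hypothesis, and separately to check that \(I\) is lower semicontinuous, so that it qualifies as a rate function in the sense of Definition~\ref{54thytr7gt5}. One observation is used throughout. For fixed \(x\), both maps
\[
\delta\mapsto\liminf_{n\to\infty}a_n^{-1}\log\mathbb{P}(V_n\in B(x,\delta)),\qquad \delta\mapsto\limsup_{n\to\infty}a_n^{-1}\log\mathbb{P}(V_n\in B(x,\delta))
\]
are nondecreasing in \(\delta\), because the balls are nested. Hence the limits as \(\delta\to0+\) exist and are infima over \(\delta>0\). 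In particular, for every \(\delta>0\),
\[
\liminf_{n\to\infty} a_n^{-1}\log\mathbb{P}(V_n\in B(x,\delta))\ge -I(x),
\]
and for every \(\eta>0\) there is \(\delta_x>0\) with
\[
\limsup_{n\to\infty} a_n^{-1}\log\mathbb{P}(V_n\in B(x,\delta_x))\le -I(x)+\eta
\]
when \(I(x)<\infty\), or \(\le -1/\eta\) when \(I(x)=\infty\).

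\textbf{Lower semicontinuity.} Let \(x_k\to x\) and fix \(\delta>0\). For \(k\) large we have \(B(x_k,\delta/2)\subset B(x,\delta)\), so
\[
-I(x_k)\le\limsup_{n\to\infty}a_n^{-1}\log\mathbb{P}(V_n\in B(x_k,\delta/2))\le\limsup_{n\to\infty}a_n^{-1}\log\mathbb{P}(V_n\in B(x,\delta)).
\]
Taking \(\limsup_{k\to\infty}\) on the left and then letting \(\delta\to0+\) on the right gives \(-\liminf_{k}I(x_k)\le -I(x)\). This is the sequential criterion for lower semicontinuity.

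\textbf{Lower bound on open sets.} Let \(G\) be open and \(x\in G\). Choose \(\delta\) with \(B(x,\delta)\subset G\). By the monotonicity remark,
\[
\liminf_{n\to\infty}a_n^{-1}\log\mathbb{P}(V_n\in G)\ge\liminf_{n\to\infty}a_n^{-1}\log\mathbb{P}(V_n\in B(x,\delta))\ge -I(x).
\]
Taking the supremum over \(x\in G\) yields \eqref{eq:weak-LDP-lower}.

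\textbf{Upper bound on compact sets.} This is the only step requiring an argument rather than unwinding definitions, and the main care is in the case \(\inf_K I=\infty\). Fix a compact \(K\) and \(\eta>0\), and set
\[
I^\eta(x):=\min\{I(x)-\eta,\,1/\eta\}.
\]
For each \(x\in K\), pick \(\delta_x\) as in the observation, so that
\[
\limsup_{n\to\infty}a_n^{-1}\log\mathbb{P}(V_n\in B(x,\delta_x))\le -I^\eta(x).
\]
By compactness, finitely many balls \(B(x_1,\delta_{x_1}),\dots,B(x_N,\delta_{x_N})\) cover \(K\). Using the union bound together with the elementary fact
\[
\limsup_{n\to\infty} a_n^{-1}\log\sum_{i=1}^N c_{n,i}=\max_{1\le i\le N}\,\limsup_{n\to\infty} a_n^{-1}\log c_{n,i},
\]
valid for finitely many nonnegative sequences since \(a_n\to\infty\), we obtain
\[
\limsup_{n\to\infty}a_n^{-1}\log\mathbb{P}(V_n\in K)\le-\min_{1\le i\le N} I^\eta(x_i)\le-\inf_{x\in K}I^\eta(x).
\]
Finally, \(\inf_{x\in K} I^\eta(x)=\min\{\inf_{x\in K} I(x)-\eta,\,1/\eta\}\), and letting \(\eta\to0+\) gives \eqref{eq:weak-LDP-upper}, including the case \(\inf_K I=\infty\).
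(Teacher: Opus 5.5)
Your argument is correct. It is essentially the standard proof of Dembo--Zeitouni's Theorem~4.1.11 (specialized to the base of open balls), which the paper cites rather than proves: a ball inside $G$ gives the lower bound, a finite subcover of $K$ with the truncation $\min\{I-\eta,1/\eta\}$ gives the upper bound, and nested balls give lower semicontinuity. The only point you leave implicit, $I\ge 0$, follows at once from $\mathbb{P}(\cdot)\le 1$.
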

The following contraction principle is borrowed from \cite[Theorem~4.2.1]{DemboZeitouni2010}. Let $(E',d')$ be some metric space.
\begin{lemma}[Contraction principle]\label{lem:contraction}
If \(\{V_n\}_{n\geq1}\) satisfies a full LDP on \(E\), with speed \(a_n\) and good rate function
\(I\), and \(F:E\to E'\) is continuous, then \(\{F(V_n)\}_{n\geq1}\) satisfies a full
LDP with good rate function
\[
  I_F(y):=\inf\{I(x):F(x)=y,~x\in E \}.
\]
\end{lemma}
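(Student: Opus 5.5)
The plan is to reduce everything to the LDP for \(\{V_n\}_{n\geq1}\) by pulling sets back through \(F\), and to use the goodness of \(I\) to show that \(I_F\) is a good rate function. The one identity that does all the work is
\[
  \inf_{y\in\Gamma'}I_F(y)=\inf_{x\in F^{-1}(\Gamma')}I(x),
  \qquad \Gamma'\subset E',
\]
which follows directly from the definition of \(I_F\) as an infimum over fibres: the fibres \(F^{-1}(\{y\})\), \(y\in\Gamma'\), partition \(F^{-1}(\Gamma')\). The LDP bounds for \(\{F(V_n)\}_{n\geq1}\) then come from those for \(\{V_n\}_{n\geq1}\), applied to preimages. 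The proof follows Dembo and Zeitouni~\cite[Theorem~4.2.1]{DemboZeitouni2010}.

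\textbf{Step 1 (goodness of \(I_F\)).} This is where some care is needed, and it is the only step that uses goodness of \(I\).

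\emph{Attainment on fibres.} Fix \(y\) with \(I_F(y)<\infty\). The fibre \(F^{-1}(\{y\})\) is closed because \(F\) is continuous. Its intersection with the compact sublevel set \(\{x:I(x)\leq I_F(y)+1\}\) is therefore compact and nonempty. On a compact set the lower semicontinuous \(I\) attains its minimum, so the infimum defining \(I_F(y)\) is attained.

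\emph{Sublevel sets.} By attainment, for every \(L\geq0\),
\[
  \{y\in E':I_F(y)\leq L\}=F\bigl(\{x\in E:I(x)\leq L\}\bigr).
\]
The inclusion ``\(\supset\)'' is immediate from the definition. The inclusion ``\(\subset\)'' uses a minimiser \(x\) with \(F(x)=y\) and \(I(x)=I_F(y)\leq L\). The right-hand side is the continuous image of a compact set, hence compact. In a metric space compact sets are closed, so \(I_F\) is lower semicontinuous and good in the sense of Definition~\ref{54thytr7gt5}.

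\textbf{Step 2 (the two LDP bounds).} Fix a Borel set \(\Gamma'\subset E'\). Since \(F\) is continuous, \(F^{-1}(\Gamma')\) is Borel and
\[
  \mathbb P(F(V_n)\in\Gamma')=\mathbb P\bigl(V_n\in F^{-1}(\Gamma')\bigr).
\]

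\emph{Upper bound.} Continuity gives \(\overline{F^{-1}(\Gamma')}\subset F^{-1}(\overline{\Gamma'})\). Since the infimum of \(I\) can only decrease on a larger set, the upper bound for \(V_n\) yields
\[
  \limsup_{n\to\infty}a_n^{-1}\log\mathbb P(F(V_n)\in\Gamma')
  \leq-\inf_{x\in\overline{F^{-1}(\Gamma')}}I(x)
  \leq-\inf_{x\in F^{-1}(\overline{\Gamma'})}I(x)
  =-\inf_{y\in\overline{\Gamma'}}I_F(y).
\]

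\emph{Lower bound.} Continuity gives \(F^{-1}(\Gamma'^\circ)\subset(F^{-1}(\Gamma'))^\circ\), because the preimage of an open set is open. Shrinking the set can only increase the infimum, so
\[
  \liminf_{n\to\infty}a_n^{-1}\log\mathbb P(F(V_n)\in\Gamma')
  \geq-\inf_{x\in(F^{-1}(\Gamma'))^\circ}I(x)
  \geq-\inf_{x\in F^{-1}(\Gamma'^\circ)}I(x)
  =-\inf_{y\in\Gamma'^\circ}I_F(y).
\]

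Together with Step 1, these two bounds are exactly the full LDP for \(\{F(V_n)\}_{n\geq1}\) with good rate function \(I_F\).
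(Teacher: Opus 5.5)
Your proof is correct. It is essentially the standard argument of Dembo and Zeitouni~\cite[Theorem~4.2.1]{DemboZeitouni2010}, which the paper cites rather than reproving: attainment of the infimum on fibres gives $\{I_F\leq L\}=F(\{I\leq L\})$ and hence goodness of $I_F$, and the two LDP bounds follow by pulling closures and interiors back through the continuous map $F$.
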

The following definition can be found in \cite[Definition 4.2.10]{DemboZeitouni2010}.
\begin{definition}\label{4grtet6y6yf}
Two sequences \(\{V_n\}_{n\geq1}\) and \(\{W_n\}_{n\geq1}\) in the same metric space are exponentially
equivalent at speed \(a_n\) if, for every \(\delta>0\),
\[
  \limsup_{n\to\infty}a_n^{-1}
  \log\mathbb{P}(d(V_n,W_n)>\delta)=-\infty.
\]
\end{definition}

The transfer results for exponentially equivalent sequences can be found in
\cite[Theorem~4.2.13]{DemboZeitouni2010}.

\begin{lemma}[Exponential equivalence]\label{lem:exp-equivalence-transfer}
Assume \(\{V_n\}_{n\geq1}\) and \(\{W_n\}_{n\geq1}\) are exponentially
equivalent at speed \(a_n\), and \(\{V_n\}_{n\geq1}\) satisfies a full LDP with speed \(a_n\) and good rate function
\(I\). Then, \(\{W_n\}_{n\geq1}\) also satisfies a full LDP with speed \(a_n\) and good rate function
\(I\).
\end{lemma}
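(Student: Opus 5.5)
The plan is to verify the two bounds of the full LDP for \(\{W_n\}_{n\geq1}\) directly. I would transfer estimates from \(\{V_n\}_{n\geq1}\) by enlarging or shrinking sets by \(\delta\), and absorb the error events \(\{d(V_n,W_n)>\delta\}\) using Definition~\ref{4grtet6y6yf}. The rate function stays the same \(I\), so goodness needs no separate proof. Throughout I assume \(d(V_n,W_n)\) is measurable; otherwise one works with outer probabilities, which changes nothing below. I also use the elementary fact that \(\limsup a_n^{-1}\log(x_n+y_n)=\max\{\limsup a_n^{-1}\log x_n,\limsup a_n^{-1}\log y_n\}\).

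\emph{Lower bound.} Let \(G\) be open, \(x\in G\) with \(I(x)<\infty\), and choose \(\delta>0\) such that \(B(x,2\delta)\subset G\). Then
\[
  \mathbb{P}(W_n\in G)\geq\mathbb{P}(V_n\in B(x,\delta))-\mathbb{P}(d(V_n,W_n)>\delta).
\]
By the LDP lower bound for \(V_n\) applied to the open ball, the first term is at least \(e^{-a_n(I(x)+\varepsilon)}\) for all large \(n\). By exponential equivalence, the second term is eventually at most \(e^{-a_n(I(x)+2\varepsilon)}\), hence negligible. This gives \(\liminf_n a_n^{-1}\log\mathbb{P}(W_n\in G)\geq-I(x)-\varepsilon\). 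Letting \(\varepsilon\to0\) and optimizing over \(x\in G\) yields \(-\inf_{G}I\), and applying this with \(G=\Gamma^\circ\) gives the lower bound.

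\emph{Upper bound.} Let \(F\) be closed and set \(F^\delta:=\{y:d(y,F)\leq\delta\}\), which is closed. Then
\[
  \mathbb{P}(W_n\in F)\leq\mathbb{P}(V_n\in F^\delta)+\mathbb{P}(d(V_n,W_n)>\delta),
\]
so \(\limsup_n a_n^{-1}\log\mathbb{P}(W_n\in F)\leq-\inf_{F^\delta}I\) for every \(\delta>0\).

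The main step, and the only place goodness of \(I\) is used, is to show \(\lim_{\delta\to0}\inf_{F^\delta}I=\inf_F I\). Suppose instead that \(\inf_{F^{\delta}}I\leq L<\inf_F I\) for all \(\delta>0\). Then for each \(k\) pick \(x_k\in F^{1/k}\) with \(I(x_k)\leq L+1/k\). All \(x_k\) lie in the sublevel set \(\{I\leq L+1\}\), which is compact by goodness. Hence a subsequence converges to some \(x\). Since \(d(x_k,F)\to0\) and \(F\) is closed, \(x\in F\). Lower semicontinuity gives \(I(x)\leq L\), contradicting \(L<\inf_F I\). The case \(\inf_F I=\infty\) is handled the same way, with an arbitrary finite \(L\).

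Taking \(F=\overline\Gamma\) then completes the full LDP for \(\{W_n\}_{n\geq1}\) with the good rate function \(I\).
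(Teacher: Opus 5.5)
Your proof is correct and is essentially the standard argument of Dembo--Zeitouni, Theorem~4.2.13, which the paper cites without reproving. That argument uses closed $\delta$-enlargements $F^\delta$ together with the goodness fact $\lim_{\delta\to0}\inf_{F^\delta}I=\inf_F I$ for the upper bound, and $\delta$-balls with the error event $\{d(V_n,W_n)>\delta\}$ absorbed for the lower bound.
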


The approximation from finite-depth fields to infinite fields will use the following
exponentially-good-approximation lemma
\cite[Theorem~4.2.16]{DemboZeitouni2010}.

\begin{lemma}[Exponentially good approximations]
\label{lem:good-approximation-DZ}
Suppose that, for every fixed \(M\), \(\{V_n^{(M)}\}_{n\geq1}\) satisfies an LDP with
rate function \(I_M\), and
\[
  \lim_{M\to\infty}\limsup_{n\to\infty}a_n^{-1}
  \log\mathbb{P}(d(V_n^{(M)},V_n)>\delta)=-\infty
\]
for every \(\delta>0\). Then \(\{V_n\}_{n\geq1}\) satisfies a weak LDP with candidate
rate
\[
  \widetilde I(x)
  =
  \sup_{\delta>0}\liminf_{M\to\infty}
  \inf_{y\in B(x,\delta)}I_M(y).
\]
For a closed set \(F\subset E\), put
\(F^\delta:=\{y\in E:d(y,F)<\delta\}\). If \(\widetilde I\) is good and
for every closed \(F\subset E\),
\[
  \lim_{\delta\to0+}\liminf_{M\to\infty}
  \inf_{y\in F^\delta}I_M(y)
  \geq \inf_{x\in F}\widetilde I(x),
\]
then the LDP is full.
\end{lemma}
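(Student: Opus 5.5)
The plan is to run the standard Dembo--Zeitouni argument: transfer ball estimates from $V_n^{(M)}$ to $V_n$ through the elementary inclusions
\[
\{V_n^{(M)}\in B(x,\delta)\}\subset\{V_n\in B(x,2\delta)\}\cup\{d(V_n^{(M)},V_n)>\delta\},
\qquad
\{V_n\in B(x,\delta)\}\subset\{V_n^{(M)}\in B(x,2\delta)\}\cup\{d(V_n^{(M)},V_n)>\delta\}.
\]
We then let $n\to\infty$, next $M\to\infty$, and finally $\delta\to0+$. The weak LDP will follow from Lemma~\ref{lem:local-LDP}, and the full LDP from a separate closed-set upper bound.

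\textbf{Step 1 (ball upper bound).} Write $\varepsilon_M(\delta):=\limsup_{n\to\infty}a_n^{-1}\log\mathbb{P}(d(V_n^{(M)},V_n)>\delta)$; by hypothesis $\varepsilon_M(\delta)\to-\infty$ as $M\to\infty$. From the second inclusion, $\log(u+v)\le\log2+\max(\log u,\log v)$, and the LDP upper bound for $V_n^{(M)}$ on the closed ball $\overline B(x,2\delta)$, we get
\[
\limsup_{n\to\infty}a_n^{-1}\log\mathbb{P}(V_n\in B(x,\delta))\le\max\Big(-\inf_{y\in\overline B(x,2\delta)}I_M(y),\ \varepsilon_M(\delta)\Big).
\]
Letting $M\to\infty$ gives the bound $-\liminf_{M}\inf_{B(x,3\delta)}I_M$. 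Letting $\delta\to0+$ yields $\lim_{\delta\to0+}\limsup_n a_n^{-1}\log\mathbb{P}(V_n\in B(x,\delta))\le-\widetilde I(x)$, since the quantity $\liminf_M\inf_{B(x,\delta)}I_M$ is monotone in $\delta$.

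\textbf{Step 2 (ball lower bound), the delicate step.} The first inclusion gives
\[
\mathbb{P}(V_n\in B(x,2\delta))\ge\mathbb{P}(V_n^{(M)}\in B(x,\delta))-\mathbb{P}(d>\delta),
\]
and subtracting probabilities is harmless only when the subtracted term is exponentially negligible. Fix $\delta$ and assume $\widetilde I(x)<\infty$; otherwise there is nothing to prove. Then along a subsequence of $M$ we have $\inf_{B(x,\delta)}I_M\le\widetilde I(x)+1$. Choose $M$ in that subsequence so large that $\varepsilon_M(\delta)<-\widetilde I(x)-2$. The LDP lower bound for $V_n^{(M)}$ on the open ball then dominates the error term, so
\[
\liminf_n a_n^{-1}\log\mathbb{P}(V_n\in B(x,2\delta))\ge-\inf_{B(x,\delta)}I_M
\]
for all such $M$. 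Taking $\liminf_M$ along the subsequence realizing the liminf and then $\delta\to0+$ gives $\ge-\widetilde I(x)$. Combined with Step 1, the hypothesis of Lemma~\ref{lem:local-LDP} holds, and the weak LDP with rate $\widetilde I$ follows. Lower semicontinuity of $\widetilde I$ is automatic from its definition as a supremum over $\delta$ of quantities that are monotone on balls.

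\textbf{Step 3 (full LDP).} The lower bound for open sets is already part of the weak LDP. For closed $F$, use
\[
\{V_n\in F\}\subset\{V_n^{(M)}\in\overline{F^{\delta}}\}\cup\{d>\delta\}\subset\{V_n^{(M)}\in F^{2\delta}\}\cup\{d>\delta\}.
\]
The closed-set upper bound for $V_n^{(M)}$ gives
\[
\limsup_n a_n^{-1}\log\mathbb{P}(V_n\in F)\le\max\Big(-\inf_{F^{2\delta}}I_M,\ \varepsilon_M(\delta)\Big).
\]
Let $M\to\infty$ (a $\liminf$ in $M$ of the infimum) and then $\delta\to0+$. The extra hypothesis converts the resulting bound into $-\inf_F\widetilde I$. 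Goodness of $\widetilde I$ is assumed, so this completes the full LDP.

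I expect the main obstacle to be Step 2, namely managing the order of limits when subtracting the approximation error. The rest is bookkeeping with monotonicity in $\delta$.
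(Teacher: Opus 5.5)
Your proposal is correct and is essentially the Dembo--Zeitouni proof of Theorem~4.2.16, which the paper cites instead of proving. Steps~1--2 are their ball estimates fed into Lemma~\ref{lem:local-LDP}; your subsequence-and-subtraction device in Step~2 is a valid substitute for their bound $\liminf_n a_n^{-1}\log(u_n+v_n)\le\max\{\liminf_n a_n^{-1}\log u_n,\limsup_n a_n^{-1}\log v_n\}$ followed by letting $M\to\infty$. Step~3 handles the blown-up closed-set hypothesis in exactly the form the paper states it, which differs from Dembo--Zeitouni's unblown condition $\inf_F\widetilde I\le\limsup_M\inf_F I_M$ (there goodness enters via their Lemma~4.1.6); with the paper's form, your argument does not even need goodness for the closed-set upper bound.

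One small repair in Step~3. On $\{V_n\in F\}\cap\{d(V_n^{(M)},V_n)\le\delta\}$ you only know $d(V_n^{(M)},F)\le\delta$, and in a general metric space this need not place $V_n^{(M)}$ in $\overline{F^\delta}$. Apply the closed-set bound instead to the closed set $\{y:d(y,F)\le\delta\}$, which still lies in $F^{2\delta}$.
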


\section{Proofs of the main theorems and properties of the rate function}
\label{sec:proofs}

The main ideas of the proof are as follows. To prove Theorem \ref{thm:empirical-LDP}, we first establish an LDP for the truncated
rescaled displacement fields $\{P_MY_n\}_{n\geq1}$ in Lemma \ref{54gtrehr6r}, and show that $\{P_MY_n\}_{n\geq1}$ is an
exponentially good approximation for $\{Y_n\}_{n\geq1}$; see Proposition \ref{prop:exp-approx}. Hence, we can obtain a full LDP for $\{Y_n\}_{n\geq1}$; see Theorem \ref{thm:tree-field-LDP}. After that, we found $\{Z_n(\sqrt n\,A)\}_{n\geq1}$ can be well approximated by the Gaussian functions $\{R_n
\}_{n\geq1}$; see Lemma \ref{lem:conditional-concentration}. Furthermore, in Lemma \ref{lem:kernel-replacement}, we show that \(\{R_n\}_{n\geq1}\) and \(\{\Psi_A(Y_n)\}_{n\geq1}\) are exponentially equivalent by the continuity of the Gaussian measure. Hence, $\{Z_n(\sqrt n\,A)\}_{n\geq1}$ and \(\{\Psi_A(Y_n)\}_{n\geq1}\) satisfy the same LDP, which can be obtained by the contraction principle and the LDP of $\{Y_n\}_{n\geq1}$ proved in Theorem \ref{thm:tree-field-LDP}. This completes the proof of Theorem \ref{thm:empirical-LDP}. Theorem~\ref{thm:main-one-sided} is a direct application of Theorem \ref{thm:empirical-LDP}. Finally, we give several properties of the resulting variational rate function $Q_A$.

\subsection{The LDP for rescaled displacement fields}

For $n\geq1$, let
\begin{equation}\label{eq:kn-mn}
  k_n:=\min\{\lfloor c\log n\rfloor, n-1\},
  \qquad
  m_n:=n-k_n>0,
\end{equation}
where
\begin{equation}\label{eq:c-choice}
  \begin{cases}
    \displaystyle
    \frac{\alpha}{2\log b}
    <c<
    \frac{\alpha}{2(1-\alpha)\log b},
      &0<\alpha<1,\\[0.9em]
    \displaystyle
    c>\frac{\alpha}{2\log b},
      &\alpha\geq1.
  \end{cases}
\end{equation}
The interval $[\frac{\alpha}{2\log b},\frac{\alpha}{2(1-\alpha)\log b}]$ is nonempty.  In all cases,
\begin{equation}\label{eq:kn-properties}
  \lim_{n\to\infty}\frac{b^{k_n}}{n^{\alpha/2}}=\infty,
\end{equation}
and, when \(0<\alpha<1\),
\begin{equation}\label{eq:kn-sublinear-property}
  b^{(1-\alpha)k_n}=o(n^{\alpha/2}).
\end{equation}
For $n\geq1$, define the {\em rescaled displacement field} $Y_n$ as follows:
\begin{equation}\label{eq:Yn}
  Y_n(v):=
  \frac{X_v}{\sqrt{m_n}}\,
  \mathbf{1}_{\{|v|\leq k_n\}},
  \qquad v\in\mathbb{T}_b^\circ,
\end{equation}
which is \(\mathcal{H}_b\)-valued.
\par
We first give an LDP for a sequence of rescaled Weibull random variables.
\begin{lemma}\label{lem:one-dimensional-LDP}
The variables \(X/\sqrt{m_n}, n\geq1\) satisfy a full LDP on \(\mathbb{R}\), with speed
\(n^{\alpha/2}\) and good rate function
\[
  I_\alpha(x):=\lambda|x|^\alpha,~x\in\mathbb{ R}.
\]
\end{lemma}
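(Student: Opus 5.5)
We prove the full LDP by the local criterion, Lemma~\ref{lem:local-LDP}, together with exponential tightness and Lemma~\ref{lem:weak-to-full}. The whole computation rests on the scaling relation $m_n=n-k_n\sim n$, because $k_n=O(\log n)$. Hence $m_n^{\alpha/2}/n^{\alpha/2}\to1$, and for every fixed $x>0$,
\[
n^{-\alpha/2}\log\mathbb{P}\bigl(X>x\sqrt{m_n}\bigr)
=\frac{(x\sqrt{m_n})^\alpha}{n^{\alpha/2}}\cdot (x\sqrt{m_n})^{-\alpha}\log\mathbb{P}\bigl(X>x\sqrt{m_n}\bigr)\longrightarrow -\lambda x^\alpha
\]
by \eqref{eq:tail}. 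Since $X$ is symmetric, the same limit holds for $\mathbb{P}(X<-x\sqrt{m_n})$.

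\emph{Exponential tightness.} Take $K_L=[-x_L,x_L]$ with $\lambda x_L^\alpha>L$. Then
\[
\mathbb{P}\bigl(|X|/\sqrt{m_n}>x_L\bigr)=2\,\mathbb{P}\bigl(X>x_L\sqrt{m_n}\bigr),
\]
and by the display its $n^{-\alpha/2}\log$ tends to $-\lambda x_L^\alpha<-L$. The factor $2$ is harmless at this speed.

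\emph{Local estimates.} Fix $x>0$ and $0<\delta<x$. For the upper bound, use
\[
\mathbb{P}\bigl(X/\sqrt{m_n}\in B(x,\delta)\bigr)\le \mathbb{P}\bigl(X>(x-\delta)\sqrt{m_n}\bigr),
\]
which gives $\limsup\le-\lambda(x-\delta)^\alpha$. For the lower bound, write
\[
\mathbb{P}\bigl(X/\sqrt{m_n}\in B(x,\delta)\bigr)\ge \mathbb{P}\bigl(X>(x-\delta/2)\sqrt{m_n}\bigr)-\mathbb{P}\bigl(X\ge (x+\delta)\sqrt{m_n}\bigr).
\]
The exponential rates of the two terms are $-\lambda(x-\delta/2)^\alpha$ and $-\lambda(x+\delta)^\alpha$, and the first strictly dominates. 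So the difference is at least half the first term for large $n$, giving $\liminf\ge-\lambda(x-\delta/2)^\alpha$. Letting $\delta\to0$, both bounds converge to $-\lambda x^\alpha$ by continuity of $t\mapsto t^\alpha$. The case $x<0$ follows by symmetry.

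\emph{The point $x=0$.} The upper bound is trivially $\le 0$. For the lower bound, $\mathbb{P}(|X|<\delta\sqrt{m_n})\to1$ because $m_n\to\infty$, so the rate is $0=-I_\alpha(0)$.

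Lemma~\ref{lem:local-LDP} then yields the weak LDP, and Lemma~\ref{lem:weak-to-full} upgrades it to a full LDP with a good rate function. Goodness is also immediate directly, since the sublevel sets of $I_\alpha$ are compact intervals.

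The only mildly delicate step is the lower bound at $x\neq0$. The tail assumption controls $\mathbb{P}(X>t)$ only logarithmically, so a small-interval probability has to be obtained as a difference of two tails. The strict gap between their rates is what makes this work, and it relies on $\lambda>0$ and on the strict monotonicity of $t\mapsto t^\alpha$.
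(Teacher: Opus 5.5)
Your proof is correct and follows the paper's route: you apply the local criterion (Lemma~\ref{lem:local-LDP}) at every point, get exponential tightness from the intervals $[-x_L,x_L]$, and finish with Lemma~\ref{lem:weak-to-full}, using $m_n/n\to1$ to handle the speed (the paper works at speed $m_n^{\alpha/2}$ and converts only at the end). Your difference-of-tails lower bound for balls around $x\neq0$ simply supplies the justification that the paper compresses into the bare assertion that the ball probability has exact rate $-\lambda(|x|-\delta)^\alpha$.
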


\begin{proof}
By Assumption \ref{45twzg5yy43} (2) and (3), for any \(t>0\), it follows that
\[
  \lim_{n\to\infty}m_n^{-\alpha/2}
  \log\mathbb{P}\!\Big(\frac{X}{\sqrt{m_n}}>t\Big)
  =
  \lim_{n\to\infty}m_n^{-\alpha/2}
  \log\mathbb{P}\!\Big(\frac{X}{\sqrt{m_n}}<-t\Big)
  =-\lambda t^\alpha .
\]
For \(x\neq0\) and \(0<\delta<|x|\), above yields that
\[
  \lim_{n\to\infty}m_n^{-\alpha/2}
  \log\mathbb{P}\!\Big(
    \frac{X}{\sqrt{m_n}}\in(x-\delta,x+\delta)
  \Big)
  =-\lambda(|x|-\delta)^\alpha .
\]
For $x=0$, since $X$ is a finite random variable and $\lim_{n\to\infty}m_n=\infty$, we have
\[
  \lim_{n\to\infty}m_n^{-\alpha/2}
  \log\mathbb{P}\!\Big(
    \frac{X}{\sqrt{m_n}}\in(-\delta,\delta)
  \Big)
  =0 .
\]
With this in hand, it is easy to check that \(X/\sqrt{m_n}, n\geq1\) satisfies the condition of Lemma~\ref{lem:local-LDP}. Hence, we obtain a weak LDP for \(X/\sqrt{m_n}, n\geq1\) with speed $m_n^{\alpha/2}$ and rate function $I_{\alpha}$. Finally, for any $L>0$, we have
\[
  \limsup_{n\to\infty}m_n^{-\alpha/2}
  \log\mathbb{P}\!\Big(\frac{|X|}{\sqrt{m_n}}>L\Big)
  =-\lambda L^\alpha ,
\]
which means the sequence is exponentially tight; see Definition \ref{eq:exponential-tightness}. Since $\lim_{n\to\infty}\frac{m_n}{n}=1$, applying Lemma~\ref{lem:weak-to-full}, the lemma follows.
\end{proof}

Recall that for $n\geq1$ and $v\in\mathbb{T}_b^\circ$,
\begin{align}\label{45t46yy5t}
k_n:=\lfloor c\log n\rfloor,\quad
  Y_n(v):=
  \frac{X_v}{\sqrt{m_n}}\,
  \mathbf{1}_{\{|v|\leq k_n\}},\quad
  P_Mh_v:=h_v\mathbf{1}_{\{|v|\leq M\}}.
\end{align}
The following proposition shows that $\{(P_MY_n)_{n\geq1}\}_{M\geq1}$ is an exponentially good approximation for $\{Y_n\}_{n\geq1}$.
\begin{proposition}
\label{prop:exp-approx}
For every \(\delta>0\),
\begin{equation}\label{eq:exp-approx}
  \lim_{M\to\infty}\limsup_{n\to\infty}
  n^{-\alpha/2}
  \log\mathbb{P}\!\left(\|Y_n-P_MY_n\|_b>\delta\right)
  =-\infty.
\end{equation}
\end{proposition}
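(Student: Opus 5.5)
The plan is to reduce \eqref{eq:exp-approx} to tail estimates for i.i.d.\ sums, one generation at a time. By definition of $\|\cdot\|_b$ and \eqref{eq:Yn},
\[
  \|Y_n-P_MY_n\|_b=\frac{1}{\sqrt{m_n}}\sum_{k=M+1}^{k_n}b^{-k}S_k,
  \qquad S_k:=\sum_{|v|=k}|X_v|,
\]
where $S_k$ is a sum of $N_k:=b^k$ i.i.d.\ copies of $|X|$. I will fix weights $\delta_k>0$ with $\sum_{k>M}\delta_k\leq\delta$. The event $\{\|Y_n-P_MY_n\|_b>\delta\}$ is then contained in $\bigcup_{k=M+1}^{k_n}\{S_k>N_k\delta_k\sqrt{m_n}\}$. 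Since there are at most $k_n=O(\log n)$ events in the union, and $\log\log n=o(n^{\alpha/2})$, it suffices to show
\[
  \limsup_{n\to\infty}n^{-\alpha/2}\max_{M<k\leq k_n}\log\mathbb{P}\big(S_k>N_k\delta_k\sqrt{m_n}\big)\leq -R(M),
\]
with $R(M)\to\infty$ as $M\to\infty$. Throughout I use $m_n/n\to1$.

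\emph{Case $0<\alpha\leq1$.} I will take $\delta_k=c_0\delta k^{-2}$ for a suitable constant $c_0$. Fix $s\in(0,\lambda)$. By \eqref{eq:tail}, $C_s:=\mathbb{E}e^{s|X|^\alpha}<\infty$. Subadditivity of $x\mapsto x^\alpha$ gives $S_k^\alpha\leq\sum_{|v|=k}|X_v|^\alpha$, and Chernoff's bound then yields
\[
  \mathbb{P}(S_k>y)\leq \exp\big(-sy^\alpha+N_k\log C_s\big),
  \qquad y=N_k\delta_k\sqrt{m_n}.
\]
The main term is $s\,b^{k\alpha}\delta_k^\alpha m_n^{\alpha/2}$. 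The error term $b^k\log C_s$ is negligible relative to it uniformly in $k\leq k_n$ exactly when $b^{(1-\alpha)k}k^{2\alpha}=o(n^{\alpha/2})$. This holds by \eqref{eq:kn-sublinear-property}, because $c$ is strictly below $\alpha/(2(1-\alpha)\log b)$, which gives a polynomial margin that absorbs the factor $k^{2\alpha}=O((\log n)^{2\alpha})$. When $\alpha=1$ there is nothing to check, since $b^{(1-\alpha)k}=1$. Since $b^{k\alpha}k^{-2\alpha}$ is eventually increasing in $k$, the maximum over $k>M$ is governed by $k=M+1$, giving $R(M)\gtrsim s(c_0\delta)^\alpha b^{\alpha M}M^{-2\alpha}\to\infty$.

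\emph{Case $\alpha>1$.} Subadditivity fails here, so I will use exponential Chernoff bounds instead. From \eqref{eq:tail} with $\alpha>1$, a Laplace-type estimate gives $\log\mathbb{E}e^{\theta|X|}\leq C(1+\theta^{\alpha'})$ for all $\theta>0$, where $\alpha'=\alpha/(\alpha-1)$. Then
\[
  \log\mathbb{P}(S_k>N_kt\sqrt{m_n})\leq -\theta N_kt\sqrt{m_n}+CN_k(1+\theta^{\alpha'}).
\]
Optimizing over $\theta$ (a Legendre transform of $\theta^{\alpha'}$) gives the bound $-c\,N_kt^\alpha m_n^{\alpha/2}+CN_k$. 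Because $t=\delta_k$ is fixed in $n$, the term $CN_k$ is negligible. Here I will take geometric weights $\delta_k=c_0\delta\,b^{-(k-M)/(2\alpha)}$. Then $N_k\delta_k^\alpha\geq (c_0\delta)^\alpha b^{M}b^{(k-M)/2}$, which is minimized at $k=M+1$, so $R(M)\gtrsim(c_0\delta)^\alpha b^M\to\infty$.

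The step I expect to be the main obstacle is the case $\alpha<1$. There a sum of many heavy-ish variables can exceed the threshold through the combined bulk contribution rather than through one large jump, and the $N_k\log C_s$ term must be controlled uniformly up to $k=k_n$. This is precisely where the upper restriction on $c$ in \eqref{eq:c-choice} enters, and I would check carefully that the $k^{-2}$ weights do not destroy that margin.
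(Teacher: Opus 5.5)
There is a genuine gap in your case $\alpha>1$; the case $0<\alpha\le1$ is correct. For $0<\alpha\le1$, with $\delta_k=c_0\delta k^{-2}$ the error term $b^k\log C_s$ is $O\bigl(b^{(1-\alpha)k_n}k_n^{2\alpha}n^{-\alpha/2}\bigr)=o(1)$ times the main term, uniformly in $k\le k_n$. The union over $O(\log n)$ generations also costs nothing at speed $n^{\alpha/2}$. (The paper instead applies one Chernoff bound to the whole sum $\sum_{k>M}b^{-\alpha k}\sum_{|v|=k}|X_v|^\alpha$, with every tilt at most $\lambda/4$, so it needs no union bound.)

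In the case $\alpha>1$, your claim that $CN_k$ is negligible ``because $t=\delta_k$ is fixed in $n$'' holds for each fixed $k$. You need it uniformly over $M<k\le k_n$, and $k_n\sim c\log n$ grows. With your geometric weights your bound reads
\[
  \log\mathbb{P}\big(S_k>N_k\delta_k\sqrt{m_n}\big)\le -c'(c_0\delta)^\alpha b^{(k+M)/2}m_n^{\alpha/2}+Cb^{k}.
\]
Since $b^{k_n}\asymp n^{c\log b}$, the positive term dominates at $k=k_n$ as soon as $c\log b>\alpha$.

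This is not just a lossy estimate. Here $\delta_{k_n}\sqrt{m_n}\asymp b^{M/(2\alpha)}n^{1/2-c\log b/(2\alpha)}\to0$, so the threshold falls below $\mathbb{E}|X|>0$, which is the law-of-large-numbers limit of $b^{-k_n}S_{k_n}$. The event at generation $k_n$ therefore has probability tending to $1$, and your union bound is vacuous. Condition \eqref{eq:c-choice} only requires $c>\alpha/(2\log b)$ when $\alpha\ge1$, so such $c$ are allowed, and your argument does not prove the proposition as stated.

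The repair is to use slowly decaying weights here too, e.g.\ $\delta_k=c_0\delta k^{-2}$. Then $c'\delta_k^\alpha m_n^{\alpha/2}\ge c'(c_0\delta)^\alpha k_n^{-2\alpha}m_n^{\alpha/2}\to\infty$ uniformly in $k\le k_n$, so $CN_k$ really is negligible. Also $\min_{k>M}b^kk^{-2\alpha}\to\infty$, which gives $R(M)\to\infty$. This is essentially what the paper does with its $j^{-2}$ weights. It uses them inside Jensen's inequality rather than a union bound, and then applies a second Jensen step that reduces everything to exponential moments of $|X|^\alpha$ at tilt $\lambda/4$.

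The obstacle you anticipated was misplaced. The case $\alpha<1$ is unproblematic; the danger is in $\alpha>1$ with weights that decay exponentially in $k$.
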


\begin{proof}
Note that
\[
  \|Y_n-P_MY_n\|_b
  =
  \frac1{\sqrt{m_n}}
  \sum_{k=M+1}^{k_n}b^{-k}\sum_{|v|=k}|X_v|.
\]
We first deal with the case \(0<\alpha\leq1\). Since
\((x+y)^\alpha\leq x^\alpha+y^\alpha\) for \(x,y\geq0\),
\begin{align}\label{54gyhu7t5}
  \|Y_n-P_MY_n\|_b^\alpha
  \leq
  m_n^{-\alpha/2}
  \sum_{k=M+1}^{k_n}b^{-\alpha k}
  \sum_{|v|=k}|X_v|^\alpha .
\end{align}
By \eqref{eq:tail}, $\mathbb{E} e^{\theta|X|^\alpha}<\infty$ for \(\theta<\lambda\). Since \(F(\theta)=\log\mathbb{E} e^{\theta|X|^\alpha}\) is convex on $(-\infty,\lambda)$ and \(F(0)=0\), we obtain that for any $\theta_0<\lambda$,
\[
  F(\theta)=F\left(\frac{1-\theta}{\theta_0}0+\frac{\theta}{\theta_0}\theta_0\right)\leq\frac{\theta}{\theta_0}F(\theta_0),
  \qquad 0\leq\theta\leq\theta_0.
\]
This means for every \(\theta_0<\lambda\), there exists a finite
\(\rho=\rho_{\theta_0}=\frac{F(\theta_0)}{\theta_0}\) such that
\begin{align}\label{5gbgy6r4}
  \log\mathbb{E} [e^{\theta|X|^\alpha}]\leq  \rho\theta,
  \qquad 0\leq\theta\leq\theta_0.
\end{align}
Let
$\theta_0=\lambda/4$ in \eqref{5gbgy6r4}. By exponential Markov's inequality and \eqref{54gyhu7t5}, we obtain that
\begin{align}\label{4grtg6yr4}
  &\mathbb{P}(\|Y_n-P_MY_n\|_b>\delta)\cr
  &\leq
  \mathbb{E}\left[\exp\Big\{
(\lambda/4)b^{\alpha(M+1)}m_n^{\alpha/2}\Big(m_n^{-\alpha/2}
  \sum_{k=M+1}^{k_n}b^{-\alpha k}
  \sum_{|v|=k}|X_v|^\alpha-\delta^{\alpha}\big)\Big\}\right]\cr
&\leq\exp\left\{
    -(\lambda/4)b^{\alpha(M+1)}\delta^\alpha m_n^{\alpha/2}
    +\rho(\lambda/4)b^{\alpha(M+1)}\sum_{k=M+1}^{k_n}b^{(1-\alpha)k}
  \right\},
\end{align}
where the last inequality follows from \eqref{5gbgy6r4} and independence of $X_v$, $v\in\mathbb{T}_b^\circ$. For \(\alpha=1\), \(\sum_{k=M+1}^{k_n}b^{(1-\alpha)k}=O(\log n)\). For \(0<\alpha<1\),
\(\sum_{k=M+1}^{k_n}b^{(1-\alpha)k}=o(n^{\alpha/2})\) by \eqref{eq:kn-sublinear-property}.  Hence, for any
\(M\geq1\),
\[
  \limsup_{n\to\infty}n^{-\alpha/2}
  \log\mathbb{P}(\|Y_n-P_MY_n\|_b>\delta)
  \leq-(\lambda/4)b^{\alpha(M+1)}\delta^\alpha .
\]
Thus, \eqref{eq:exp-approx} follows by letting $M\to\infty.$
\par
We proceed to deal with \(\alpha>1\). Let
$
  \ell_{*}:=\sum_{\ell\geq1}\ell^{-2}.
$
For \(1\leq j\leq k_n-M\), set
\[
  a_j:=b^{-(M+j)}
  \sum_{|v|=M+j}|X_v|,
  \qquad
  w_j:=
  \frac{j^{-2}}{\sum_{\ell=1}^{k_n-M}\ell^{-2}}.
\]
By Jensen's inequality,
\begin{align*}
  \Big(\sum_{j=1}^{k_n-M}a_j\Big)^\alpha
  =
  \Big(
    \sum_{j=1}^{k_n-M}w_j\frac{a_j}{w_j}
  \Big)^\alpha
  \leq
  \sum_{j=1}^{k_n-M}w_j
  \Big(\frac{a_j}{w_j}\Big)^\alpha
  \leq
  \ell_{*}^{\alpha-1}
  \sum_{j=1}^{k_n-M}j^{2(\alpha-1)}a_j^\alpha.
\end{align*}
A second application of Jensen's inequality gives
\[
  \Big(
    \sum_{|v|=M+j}|X_v|
  \Big)^\alpha={b^{\alpha(M+j)}}\Big(
    \sum_{|v|=M+j}\frac{|X_v|}{b^{(M+j)}}
  \Big)^\alpha
  \leq
  b^{(M+j)(\alpha-1)}
  \sum_{|v|=M+j}|X_v|^\alpha.
\]
Consequently,
\begin{align}\label{eq:alpha-large-Jensen}
  \|Y_n-P_MY_n\|_b^\alpha&=
  \frac1{m_n^{\alpha/2}}
  \Big(\sum_{j=1}^{k_n-M}b^{-(M+j)}\sum_{|v|=M+j}|X_v|\Big)^{\alpha}\cr
  &\leq
  \frac{\ell_{*}^{\alpha-1}}{m_n^{\alpha/2}}
  \sum_{j=1}^{k_n-M}j^{2(\alpha-1)}\Big(b^{-(M+j)}\sum_{|v|=M+j}|X_v|\Big)^{\alpha}\cr
&\leq
  \frac{\ell_{*}^{\alpha-1}}{m_n^{\alpha/2}}
  \sum_{j=1}^{k_n-M}
  j^{2(\alpha-1)}b^{-(M+j)}
  \sum_{|v|=M+j}|X_v|^\alpha.
\end{align}
Set
\[
  D_{\alpha,b}
  :=
  \ell_{*}^{\alpha-1}
  \sup_{j\geq1}
  j^{2(\alpha-1)}b^{-j}
  <\infty,
  \qquad
  \theta_M:=
  \frac{\lambda b^M}{4D_{\alpha,b}}.
\]
Again, set $\theta_0=\lambda/4$ in \eqref{5gbgy6r4}. For every \(j\geq1\),
\[
  \theta_M \ell_{*}^{\alpha-1}
  j^{2(\alpha-1)}b^{-(M+j)}
  \leq\frac{\lambda}{4}.
\]
Similar to \eqref{4grtg6yr4}, by \eqref{eq:alpha-large-Jensen}, it follows that
\begin{align*}
  &\mathbb{P}\!\left(\|Y_n-P_MY_n\|_b>\delta\right)\\
  &\leq
  \mathbb{E}\left[\exp\Big\{
\theta_Mm_n^{\alpha/2}\Big(\frac{\ell_{*}^{\alpha-1}}{m_n^{\alpha/2}}
  \sum_{j=1}^{k_n-M}
  j^{2(\alpha-1)}b^{-(M+j)}
  \sum_{|v|=M+j}|X_v|^\alpha-\delta^{\alpha}\Big)\Big\}\right]\cr
  &\leq
  \exp\left\{
    -\theta_M\delta^\alpha m_n^{\alpha/2}
    +\rho\theta_M \ell_{*}^{\alpha-1}
      \sum_{j=1}^{k_n-M}j^{2(\alpha-1)}
  \right\}.
\end{align*}
Since \(k_n=O(\log n)\), we have
$
  \sum_{j=1}^{k_n-M}j^{2(\alpha-1)}
  =
  O\bigl((\log n)^{2\alpha-1}\bigr).
$
Hence, above yields that
\[
  \limsup_{n\to\infty}
  n^{-\alpha/2}
  \log\mathbb{P}\!\left(\|Y_n-P_MY_n\|_b>\delta\right)
  \leq-\theta_M\delta^\alpha.
\]
The desired result follows by letting $M\to\infty$.
\end{proof}
For fixed \(M\geq1\), define
\begin{equation}\label{eq:finite-energy}
  \mathcal{I_{\alpha}}^{(M)}(h):=
  \begin{cases}
    \displaystyle
    \lambda\sum_{1\leq|v|\leq M}|h_v|^\alpha,
       &h\in E_M,\\
    \infty,&h\notin E_M,
  \end{cases}
\end{equation}
where we recall that $E_M=\{h\in\mathcal{H}_b:h_v=0\text{ whenever }|v|>M\}.$ Recall from \eqref{eq:Yn} that \(Y_n\) is supported on the first \(k_n\)
generations, while \(P_MY_n\) retains only its first \(M\) generations.

By Lemma \ref{lem:one-dimensional-LDP}, finite product principle for large deviations (see \cite[Exercise 4.2.7]{DemboZeitouni2010}) and independence, we obtain the following lemma.

\begin{lemma}\label{54gtrehr6r}
\(\{P_MY_n\}_{n\geq1}\), viewed as a sequence of random variables in \(\mathcal{H}_b\), satisfies a full LDP with speed
\(n^{\alpha/2}\) and rate \(\mathcal{I_{\alpha}}^{(M)}\).
\end{lemma}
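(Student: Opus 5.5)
We will prove Lemma~\ref{54gtrehr6r} by first establishing the LDP in the finite-dimensional space \(E_M\) and then transporting it into \(\mathcal{H}_b\).

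\emph{Step 1: the LDP in \(E_M\).} Fix \(M\). For all \(n\) large enough that \(k_n\geq M\), the field \(P_MY_n\) has coordinates \(X_v/\sqrt{m_n}\) for \(1\leq|v|\leq M\), and all other coordinates vanish. There are \(N_M:=\sum_{k=1}^M b^k\) such coordinates. They are i.i.d. and distributed as \(X/\sqrt{m_n}\).

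By Lemma~\ref{lem:one-dimensional-LDP}, each coordinate satisfies a full LDP with speed \(n^{\alpha/2}\) and good rate function \(I_\alpha\). The finite product principle \cite[Exercise 4.2.7]{DemboZeitouni2010} then gives a full LDP for the vector \((X_v/\sqrt{m_n})_{1\leq|v|\leq M}\) in \(\mathbb{R}^{N_M}\), equipped with the product topology. The rate function is \(\sum_{1\leq|v|\leq M}\lambda|x_v|^\alpha\). This rate is good, because a finite product of compact sublevel sets is compact.

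Finitely many values of \(n\) (those with \(k_n<M\)) do not affect the \(\liminf\) and \(\limsup\), so they can be ignored.

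\emph{Step 2: transport into \(\mathcal{H}_b\).} Define
\[
\iota:\mathbb{R}^{N_M}\to\mathcal{H}_b,\qquad \iota(x)_v:=x_v\mathbf 1_{\{1\leq|v|\leq M\}}.
\]
This map is linear. Moreover \(\|\iota(x)\|_b=\sum_{1\leq|v|\leq M}b^{-|v|}|x_v|\), which is a norm on \(\mathbb{R}^{N_M}\), so \(\iota\) is continuous. Hence the contraction principle, Lemma~\ref{lem:contraction}, applies. It yields a full LDP for \(P_MY_n=\iota\big((X_v/\sqrt{m_n})_{1\leq|v|\leq M}\big)\) in \(\mathcal{H}_b\), with good rate function
\[
\inf\{\textstyle\sum\lambda|x_v|^\alpha:\iota(x)=h\}.
\]

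\emph{Step 3: identify the rate.} Since \(\iota\) is injective with image exactly \(E_M\), the infimum above equals \(\mathcal{I_{\alpha}}^{(M)}(h)\) for \(h\in E_M\). For \(h\notin E_M\) the constraint set is empty, so the infimum is \(+\infty=\mathcal{I_{\alpha}}^{(M)}(h)\).

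\emph{Main subtlety.} The only step needing care is that the one-dimensional LDP is stated at speed \(n^{\alpha/2}\) rather than \(m_n^{\alpha/2}\). This is already settled in Lemma~\ref{lem:one-dimensional-LDP} via \(m_n/n\to1\), so I expect no real obstacle.
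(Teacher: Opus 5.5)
Your proposal is correct and follows essentially the same route as the paper, which obtains the lemma in one line from Lemma~\ref{lem:one-dimensional-LDP}, the finite product principle of Dembo and Zeitouni (Exercise 4.2.7), and the independence of the $X_v$. Your explicit contraction through the continuous linear injection $\iota$ onto $E_M$, and your remark that the finitely many $n$ with $k_n<M$ are irrelevant, simply spell out details that the paper leaves implicit.
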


In the following theorem, by letting $M\to\infty$ to \(\{P_MY_n\}_{n\geq1}\), we obtain a full LDP for \(\{Y_n\}_{n\geq1}\).
\begin{theorem}\label{thm:tree-field-LDP}
The fields \(\{Y_n\}_{n\geq1}\) satisfy a full LDP on \(\mathcal{H}_b\), with speed
\(n^{\alpha/2}\) and good rate function \(\mathcal{I_{\alpha}}\).
\end{theorem}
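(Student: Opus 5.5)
We will apply Lemma~\ref{lem:good-approximation-DZ} with $V_n=Y_n$, $V_n^{(M)}=P_MY_n$ and $I_M=\mathcal{I_{\alpha}}^{(M)}$. Lemma~\ref{54gtrehr6r} supplies the LDP for each fixed $M$, and Proposition~\ref{prop:exp-approx} supplies the exponentially good approximation condition. So $\{Y_n\}$ satisfies a weak LDP with candidate rate
\[
  \widetilde I(x)=\sup_{\delta>0}\liminf_{M\to\infty}\inf_{y\in B_b(x,\delta)}\mathcal{I_{\alpha}}^{(M)}(y).
\]
It then remains to identify $\widetilde I=\mathcal{I_{\alpha}}$ and to verify the closed-set condition. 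Goodness of $\mathcal{I_{\alpha}}$ is already known from Proposition~\ref{prop:energy-good}.

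\emph{Upper bound $\widetilde I\le\mathcal{I_{\alpha}}$.} Take $h$ with $\mathcal{I_{\alpha}}(h)<\infty$. For $M$ large we have $P_Mh\in B_b(h,\delta)$ by Proposition~\ref{prop:Hb-Banach}. Moreover $\mathcal{I_{\alpha}}^{(M)}(P_Mh)=\lambda\sum_{|v|\le M}|h_v|^\alpha\le\mathcal{I_{\alpha}}(h)$. Hence $\widetilde I(h)\le\mathcal{I_{\alpha}}(h)$.

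\emph{Lower bound $\widetilde I\ge\mathcal{I_{\alpha}}$.} Observe that $\mathcal{I_{\alpha}}^{(M)}\ge\mathcal{I_{\alpha}}$ pointwise, since on $E_M$ the two coincide and off $E_M$ the left side is $\infty$. Therefore
\[
  \inf_{y\in B_b(x,\delta)}\mathcal{I_{\alpha}}^{(M)}(y)\ge\inf_{y\in B_b(x,\delta)}\mathcal{I_{\alpha}}(y).
\]
Letting $\delta\to0$, the lower semicontinuity of $\mathcal{I_{\alpha}}$ (Proposition~\ref{prop:energy-good}) gives $\sup_{\delta}\inf_{B_b(x,\delta)}\mathcal{I_{\alpha}}=\mathcal{I_{\alpha}}(x)$. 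Thus $\widetilde I=\mathcal{I_{\alpha}}$, which is good.

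\emph{Closed-set condition.} Fix a closed set $F$. By the pointwise domination above, it suffices to show
\[
  \lim_{\delta\to0}\inf_{y\in F^\delta}\mathcal{I_{\alpha}}(y)\ge\inf_{F}\mathcal{I_{\alpha}}.
\]
This is where compactness enters, and it is the only step requiring care. Suppose the left side equals $L<\infty$. For $\delta=1/j$ choose $y_j\in F^{1/j}$ with $\mathcal{I_{\alpha}}(y_j)\le L+1/j$. All $y_j$ lie in the compact sublevel set $\{\mathcal{I_{\alpha}}\le L+1\}$, so a subsequence converges to some $y^*$. Since $d(y_j,F)<1/j$ and $F$ is closed, $y^*\in F$. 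By lower semicontinuity, $\mathcal{I_{\alpha}}(y^*)\le L$, so $\inf_F\mathcal{I_{\alpha}}\le L$.

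Lemma~\ref{lem:good-approximation-DZ} then upgrades the weak LDP to a full LDP with good rate function $\mathcal{I_{\alpha}}$. One minor point: Lemma~\ref{54gtrehr6r} is stated on $\mathcal{H}_b$, while the product LDP lives on the finite-dimensional space $E_M$. Since $E_M$ is a closed subspace of $\mathcal{H}_b$ and $\mathcal{I_{\alpha}}^{(M)}=\infty$ off $E_M$, the LDP transfers to $\mathcal{H}_b$ by the contraction principle (Lemma~\ref{lem:contraction}) applied to the isometric inclusion $E_M\hookrightarrow\mathcal{H}_b$.
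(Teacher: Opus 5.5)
Your proof is correct and takes the same route as the paper. You apply Lemma~\ref{lem:good-approximation-DZ} with Lemma~\ref{54gtrehr6r} and Proposition~\ref{prop:exp-approx}, identify the candidate rate via $P_Mh\to h$ and the domination $\mathcal{I_{\alpha}}^{(M)}\ge\mathcal{I_{\alpha}}$, and check the closed-set condition using goodness of $\mathcal{I_{\alpha}}$. The differences are cosmetic: you use lower semicontinuity directly where the paper uses the positive distance from $h$ to the compact sublevel set $K_L$, and you write out the subsequence argument and the transfer from $E_M$ to $\mathcal{H}_b$ that the paper leaves implicit.
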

\begin{proof}
By Proposition~\ref{prop:exp-approx}, \(\{P_MY_n\}_{n\geq1}\) is an
exponentially good approximation for \(\{Y_n\}_{n\geq1}\). Therefore, applying Lemma~\ref{lem:good-approximation-DZ} and Lemma \ref{54gtrehr6r}, we get a weak LDP for $\{Y_n\}_{n\geq1}$ with candidate rate
\[
  \widetilde{\mathcal{I_{\alpha}}}(h)
  =
  \sup_{\delta>0}\liminf_{M\to\infty}
  \inf_{g\in B_b(h,\delta)}\mathcal{I_{\alpha}}^{(M)}(g),
\]
where \(B_b(h,\delta)\) is the open \(\|\cdot\|_b\)-ball.
\par
We claim that $\widetilde{\mathcal{I_{\alpha}}}(h)=\mathcal{I_{\alpha}}(h).$ For every \(h\in\mathcal{H}_b\), since \(\lim_{M\to\infty}P_Mh=h\) (see Proposition \ref{prop:Hb-Banach}), it follows that
\[
  \widetilde{\mathcal{I_{\alpha}}}(h)
  \leq
  \lim_{M\to\infty}\mathcal{I_{\alpha}}^{(M)}(P_Mh)
  =\mathcal{I_{\alpha}}(h).
\]
If $\mathcal{I_{\alpha}}(h)=0$, then above yields that $\widetilde{\mathcal{I_{\alpha}}}(h)=\mathcal{I_{\alpha}}(h)=0.$ Hence, it suffices to consider $\mathcal{I_{\alpha}}(h)>0$. Fix any \(L\in(0,\mathcal{I_{\alpha}}(h))\). The compact set
\(K_L=\{g:\mathcal{I_{\alpha}}(g)\leq L\}\) does not contain \(h\), so
\(\operatorname{dist}_{\|\cdot\|_b}(h,K_L)>0\).  Choose \(\delta^*<\operatorname{dist}_{\|\cdot\|_b}(h,K_L)\), then every \(g\in B_b(h,\delta^*)\) satisfies
\(\mathcal{I_{\alpha}}(g)>L\).  Since \(\mathcal{I_{\alpha}}^{(M)}(g)\geq\mathcal{I_{\alpha}}(g)\) for all \(g\), we get
\[
  \widetilde{\mathcal{I_{\alpha}}}(h)
  \geq\liminf_{M\to\infty}
  \inf_{g\in B_b(h,\delta^*)}\mathcal{I_{\alpha}}^{(M)}(g) \geq\liminf_{M\to\infty}
  \inf_{g\in B_b(h,\delta^*)}\mathcal{I_{\alpha}}(g)\geq L.
\]
Since above holds for any \(L\in(0,\mathcal{I_{\alpha}}(h))\), it follows that $\widetilde{\mathcal{I_{\alpha}}}(h)\geq\mathcal{I_{\alpha}}(h)$. Hence, $\widetilde{\mathcal{I_{\alpha}}}(h)=\mathcal{I_{\alpha}}(h),$ which is a good rate function by Proposition \ref{prop:energy-good}.
\par
It remains to show the LDP for $\{Y_n\}_{n\geq1}$ is full. Since \(\mathcal{I_{\alpha}}^{(M)}\geq\mathcal{I_{\alpha}}\), for every closed \(F\subset\mathcal{H}_b\) and \(\delta>0\), it holds that
\[
  \lim_{\delta\to0+}\liminf_{M\to\infty}\inf_{g\in F^\delta}\mathcal{I_{\alpha}}^{(M)}(g)
  \geq \lim_{\delta\to0+}\inf_{g\in F^\delta}\mathcal{I_{\alpha}}(g)=\inf_{g\in F}\mathcal{I_{\alpha}}(g),
\]
where the last equality follows from the goodness of \(\mathcal{I_{\alpha}}\).
Hence the closed-set condition in
Lemma~\ref{lem:good-approximation-DZ} is satisfied. Finally, by Lemma~\ref{lem:good-approximation-DZ}, the LDP is full.
\end{proof}

\subsection{Proof of Theorem~\ref{thm:empirical-LDP}}

Let \(\nu_n\) be the law of \(X_1+\cdots+X_n\), $n\geq1$. The following lemma gives
a uniform central-limit estimate. This result is a combination of
\cite[Lemma~2.2]{LouidorPerkins2015} and \cite[Lemma 2.1 (i)]{ChenHe2019}.

\begin{lemma}
\label{lem:uniform-CLT}
For every \(\ell>1\),
\begin{equation}
  \lim_{n\to\infty}
  \sup_{a\in[\ell^{-1},\ell]}\sup_{z\in\mathbb{R}}
  \left|
    \nu_n\!\left(\sqrt n\,(aA-z)\right)-\nu(aA-z)
  \right|
  =0.\nonumber
\end{equation}
Moreover, \((a,z)\mapsto\nu(aA-z)\) is uniformly continuous on $K\times \mathbb{R}$ for every compact set $K\subset(0,\infty)$.
\end{lemma}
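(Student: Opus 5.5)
The lemma has two claims. The first is a uniform CLT for the scaled family $aA-z$. The second is the uniform continuity of $(a,z)\mapsto\nu(aA-z)$. I would prove the continuity claim first, since it also feeds the CLT part.

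\emph{Continuity.} By \eqref{eq:gA-endpoints},
\[
\nu(aA-z)=\sum_{j=1}^{r_A}\bigl[\Phi(ab_j-z)-\Phi(aa_j-z)\bigr],
\]
with the conventions $\Phi(+\infty)=1$ and $\Phi(-\infty)=0$. It therefore suffices to show that each map $(a,z)\mapsto\Phi(ac-z)$, with $c$ a finite endpoint, is uniformly continuous on $K\times\mathbb{R}$. Infinite endpoints give constants and need no argument.

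In $z$ the map is Lipschitz with constant $\sup\varphi$, uniformly in $a$. In $a$, the mean value theorem gives
\[
|\Phi(ac-z)-\Phi(a'c-z)|\le |c|\,\sup\varphi\,|a-a'|,
\]
which is uniform in $z$ because $c$ is a fixed finite number. The two bounds together give a Lipschitz estimate on $K\times\mathbb{R}$. In fact compactness of $K$ is not needed here beyond $a>0$ keeping the set a union of intervals.

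\emph{Uniform CLT.} Write $F_n(x):=\nu_n((-\infty,\sqrt n x])$. Each set $\sqrt n(aA-z)$ is a union of $r_A$ intervals with endpoints $\sqrt n(ac_j-z)$. Hence
\[
|\nu_n(\sqrt n(aA-z))-\nu(aA-z)|\le 2r_A\sup_x|F_n(x)-\Phi(x)|+2r_A\sup_x\nu_n(\{\sqrt n x\}).
\]
The second term bounds the contribution of open versus closed endpoints through atoms. It is at most $\sup_x|F_n(x)-F_n(x-)|$, and since $\Phi$ is continuous this is at most $2\sup_x|F_n-\Phi|$.

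Since $\mathbb{E}X=0$ and $\mathbb{E}X^2=1$, the CLT gives $F_n\to\Phi$ pointwise. Pólya's theorem (pointwise convergence of distribution functions to a continuous limit is uniform) then gives $\sup_x|F_n(x)-\Phi(x)|\to0$. This bound does not depend on $(a,z)$, which proves the first claim, uniformly over all $a>0$.

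The only point requiring care is the endpoint and atom bookkeeping, that is, the treatment of whether intervals are open or closed. Pólya's theorem disposes of it, so I expect no real obstacle. In particular, neither the Weibull tail assumption nor any Berry–Esseen rate is needed.
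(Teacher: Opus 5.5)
Your proof is correct. It also differs from the paper, which does not prove this lemma: it imports it as a combination of Lemma~2.2 of Louidor and Perkins and Lemma~2.1(i) of Chen and He.

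Your route is self-contained. The key observation is that for $a>0$ every set $aA-z$ is a disjoint union of $r_A$ intervals with endpoints $ac-z$, so the error is controlled by the Kolmogorov distance. At each endpoint the distribution-function error is at most $\sup_x|F_n(x)-\Phi(x)|$. Each atom satisfies $F_n(x)-F_n(x-)\le 2\sup_y|F_n(y)-\Phi(y)|$ because $\Phi$ is continuous. Together these give
$|\nu_n(\sqrt n(aA-z))-\nu(aA-z)|\le 6r_A\sup_x|F_n(x)-\Phi(x)|$,
and P\'olya's theorem sends the right-hand side to zero.

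This buys three things the citation does not make visible:
\begin{itemize}
\item The convergence is uniform over all $a>0$, indeed over all unions of at most $r_A$ intervals, so the restriction $a\in[\ell^{-1},\ell]$ is not needed. This is harmless for Lemma~\ref{lem:kernel-replacement}, where only $a=\rho_n\to1$ is used.
\item Only $\mathbb{E}X=0$ and $\mathbb{E}X^2=1$ enter, not symmetry or the Weibull tail.
\item The continuity claim comes out as a global Lipschitz bound on $(0,\infty)\times\mathbb{R}$, so compactness of $K$ is irrelevant, as you note.
\end{itemize}
The citation route buys only brevity.

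One cosmetic point: your CLT step does not actually use the continuity claim, only the endpoint representation of $\nu(aA-z)$ through $\Phi$. The remark that continuity ``feeds'' the CLT part can therefore be dropped.
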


We next state the uniform concentration
estimate in \cite[Lemma~2.4]{LouidorPerkins2015}. For a nonzero finite point measure \(\zeta\) on \(\mathbb{R}\), denote by $\{Z_n^{\zeta}\}_{n\geq0}$ the branching random walk with initial value $\zeta.$ Put
\[
  \overline Z_n^\zeta(\cdot)
  :=\frac{Z_n^\zeta(\cdot)}{Z^{\zeta}_n(\mathbb{R})},
  \qquad |\zeta|:=\zeta(\mathbb{R}).
\]
We write $x\in\zeta$ if $x$ is an atom of $\zeta.$
\begin{lemma}
\label{lem:finite-configuration-concentration}
There exists $C_1,C_2>0$ such that for all $\delta>0$ sufficiently small, measurable set \(B\subset\mathbb{R}\), \(n\geq1\) and
nonzero finite point measure \(\zeta\), it follows that
\[
  \mathbb{P}\!\left(
    \Bigg|
      \overline Z_n^\zeta(B)
      -\frac1{|\zeta|}\sum_{x\in\zeta}\nu_n(B-x)
    \Bigg|>\delta
  \right)
  \leq C_1\exp\{-C_2\delta^2|\zeta|\}.
\]
\end{lemma}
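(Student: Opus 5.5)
Under Assumption~\ref{45twzg5yy43}(1) the normalisation in $\overline Z_n^\zeta$ is deterministic, so the plan is to write $\overline Z_n^\zeta(B)$ as an average of $|\zeta|$ independent $[0,1]$-valued variables with means $\nu_n(B-x)$ and then apply Hoeffding's inequality. This gives the bound with $C_1=C_2=2$ and for every $\delta>0$, not only small ones.

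\emph{Step 1: decomposition.} Fix a nonzero finite point measure $\zeta=\sum_{i=1}^N\delta_{x_i}$, with $N=|\zeta|$ and atoms repeated according to multiplicity. By the branching property,
\[
Z_n^\zeta=\sum_{i=1}^N Z_n^{(i)},
\]
where $Z_n^{(i)}$ are independent branching random walks started from $\delta_{x_i}$. Since $p_b=1$, each $Z_n^{(i)}$ has exactly $b^n$ particles, so $Z_n^\zeta(\mathbb R)=Nb^n$ deterministically. Hence
\[
\overline Z_n^\zeta(B)=\frac1N\sum_{i=1}^N W_i,\qquad W_i:=b^{-n}Z_n^{(i)}(B)\in[0,1],
\]
and the $W_i$ are independent.

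\emph{Step 2: the means.} Every generation-$n$ particle $u$ of $Z^{(i)}$ sits at $x_i+S_u$. Here $S_u$ is a sum of $n$ i.i.d.\ copies of $X$, so $S_u$ has law $\nu_n$. Linearity of expectation over the $b^n$ particles gives
\[
\mathbb E W_i=b^{-n}\sum_{|u|=n}\mathbb P(x_i+S_u\in B)=\nu_n(B-x_i).
\]
Therefore the centering term in the lemma, $\frac1{|\zeta|}\sum_{x\in\zeta}\nu_n(B-x)$, is exactly $\mathbb E\,\overline Z_n^\zeta(B)$.

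\emph{Step 3: Hoeffding.} For independent variables in $[0,1]$, Hoeffding's inequality gives
\[
\mathbb P\Big(\Big|\frac1N\sum_{i=1}^N(W_i-\mathbb E W_i)\Big|>\delta\Big)\le 2e^{-2\delta^2N}.
\]
This is the claim with $C_1=C_2=2$, uniformly in $B$, $n$, $\zeta$ and $\delta>0$.

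There is no real obstacle here. The only point needing care is that the denominator $Z_n^\zeta(\mathbb R)$ is deterministic, which is exactly where regularity of the tree is used. For random offspring one would instead need the argument of \cite[Lemma~2.4]{LouidorPerkins2015}, controlling the ratio of two random sums. Measurability of $W_i$ and the counting of atoms with multiplicity are routine.
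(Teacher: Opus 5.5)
Your argument is correct, and it takes a genuinely different route from the paper. The paper gives no proof of this lemma; it imports the estimate from \cite[Lemma~2.4]{LouidorPerkins2015}, a result from a setting that allows random offspring numbers. There the normalisation $Z_n^\zeta(\mathbb R)$ can be random, which is consistent with the statement's unspecified constants $C_1,C_2$ and its restriction to $\delta$ sufficiently small.

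You instead use the standing assumption $p_b=1$. The denominator is then the deterministic number $|\zeta|b^n$, so $\overline Z_n^\zeta(B)$ is an average of $|\zeta|$ independent $[0,1]$-valued variables $W_i$. Their means are exactly $\nu_n(B-x_i)$, since each generation-$n$ descendant of $x_i$ sits at $x_i$ plus a sum of $n$ i.i.d.\ copies of $X$. Hoeffding's inequality then finishes the proof.

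Your route gives a short, self-contained proof with explicit constants $C_1=C_2=2$. It holds for every $\delta>0$ and does not depend on the law of $X$. In particular, the closing monotonicity-in-$\delta$ step in the proof of Lemma~\ref{lem:conditional-concentration} becomes unnecessary.

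The citation's advantage is generality beyond the regular tree. This paper never needs it, since that same proof already uses $|Z_{k_n}|=b^{k_n}$ explicitly.
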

Let \(\mathcal{F}_{n}:=\sigma(Z_k,0\leq k\leq n),~n\geq0\) be the natural filtration of the branching random walk. Define
\begin{equation}
  R_n
  :=
  \frac{1}{|Z_{k_n}|}\sum_{u\in Z_{k_n}}
  \nu_{n-k_n}\!\left(\sqrt n\,A-S_u\right),
\end{equation}
where $k_n$ is defined in \eqref{eq:kn-mn}. The following lemma says that $\{\overline Z_n(\sqrt n\,A)\}_{n\geq1}$ and $\{R_n\}_{n\geq1}$ are exponentially equivalent; see Definition \ref{4grtet6y6yf}.
\begin{lemma}\label{lem:conditional-concentration}
For every \(\delta>0\),
\[
  \limsup_{n\to\infty}n^{-\alpha/2}
  \log\mathbb{P}\!\left(
    \left|\overline Z_n(\sqrt n\,A)-R_n\right|>\delta
  \right)
  =-\infty.
\]
\end{lemma}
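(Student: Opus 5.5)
Condition on $\mathcal F_{k_n}$ and apply Lemma~\ref{lem:finite-configuration-concentration}, using the branching property at generation $k_n$. Given $\mathcal F_{k_n}$, the process $\{Z_{k_n+j}\}_{j\geq0}$ is a branching random walk started from the point measure $\zeta:=Z_{k_n}$. Since the tree is $b$-regular, this $\zeta$ has exactly $|\zeta|=b^{k_n}$ atoms, located at $\{S_u:|u|=k_n\}$. Hence $\overline Z_n(\sqrt n A)$ has the conditional law of $\overline Z^{\zeta}_{m_n}(\sqrt n A)$, where $m_n=n-k_n\geq1$. Taking $B=\sqrt n A$ (a measurable set) and $n$ replaced by $m_n$ in Lemma~\ref{lem:finite-configuration-concentration}, the centering term becomes
\[
  \frac1{|\zeta|}\sum_{x\in\zeta}\nu_{m_n}(\sqrt n A-x)
  =\frac{1}{|Z_{k_n}|}\sum_{u\in Z_{k_n}}\nu_{n-k_n}(\sqrt n A-S_u)=R_n .
\]

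For $\delta>0$ small enough that the lemma applies, this gives, almost surely,
\[
  \mathbb P\Big(\big|\overline Z_n(\sqrt n A)-R_n\big|>\delta\,\Big|\,\mathcal F_{k_n}\Big)\leq C_1\exp\{-C_2\delta^2 b^{k_n}\}.
\]
The bound is deterministic because $|Z_{k_n}|=b^{k_n}$ does not depend on the displacements. Taking expectations, we get
\[
  n^{-\alpha/2}\log\mathbb P(|\overline Z_n(\sqrt nA)-R_n|>\delta)\leq n^{-\alpha/2}\log C_1-C_2\delta^2\, b^{k_n}n^{-\alpha/2}.
\]
The right-hand side tends to $-\infty$ by \eqref{eq:kn-properties}. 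Note that $b^{k_n}/n^{\alpha/2}\to\infty$ because $c>\alpha/(2\log b)$; this is exactly why the constant $c$ was chosen that way. For larger $\delta$ the event only shrinks, so by monotonicity in $\delta$ the claim holds for every $\delta>0$.

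The only step needing care is justifying the conditional application of Lemma~\ref{lem:finite-configuration-concentration}. One must check that the lemma's constants are uniform in $\zeta$, in $B$, and in the time horizon, which is how it is stated. One must also check that the randomness of the atom locations $S_u$ enters only through $\zeta$, so the bound holds for every realization. Both follow from the Markov/branching property of the BRW with respect to $\mathcal F_{k_n}$ and the independence of the displacements after generation $k_n$. No large-deviation estimate on the displacements themselves is needed here.
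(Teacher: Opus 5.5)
Your proposal is correct and follows essentially the same route as the paper's proof. You condition on $\mathcal F_{k_n}$ and use the branching property to apply Lemma~\ref{lem:finite-configuration-concentration} with $\zeta=Z_{k_n}$, $|\zeta|=b^{k_n}$ and horizon $n-k_n$, which gives the deterministic bound $C_1e^{-C_2\delta^2 b^{k_n}}$; you then conclude via \eqref{eq:kn-properties} for small $\delta$ and monotonicity in $\delta$ for all $\delta>0$.
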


\begin{proof}Let $\{\mathcal{Z}_n\}_{n\geq0}$ be an independent copy of the branching random walk $\{{Z}_n\}_{n\geq0}$. By Lemma \ref{lem:finite-configuration-concentration}, for $\delta>0$ sufficiently
small, it follows that for any $n\geq1$,
\begin{align}
\mathbb{P}\!\left(
    \left|\overline Z_n(\sqrt n\,A)-R_n\right|>\delta
  \right)
  &=\mathbb{E}\!\left[\mathbb{P}\!\left(\left|\overline Z_n(\sqrt n\,A)-R_n\right|>\delta\,\middle|\,\mathcal{F}_{k_n}\right)\right]\cr
  &=\mathbb{E}\!\left[\mathbb{P}\!\left(\left|\overline {\mathcal{Z}}^{Z_{k_n}}_{n-k_n}(\sqrt n\,A)-R_n\right|>\delta\,\middle|\,\mathcal{F}_{k_n}\right)\right]\cr
  &\leq\mathbb{E}\!\left[C_1e^{-C_2\delta^2|Z_{k_n}|}\right]\cr
  &=C_1e^{-C_2\delta^2b^{k_n}},\nonumber
\end{align}
where the second equality follows from the branching property. This, together with \eqref{eq:kn-properties}, concludes the lemma for $\delta>0$ sufficiently small. Since $\mathbb{P}\!(|\overline Z_n(\sqrt n\,A)-R_n|>\delta)$ is decreasing with respect to $\delta$, the lemma holds with any $\delta>0$.
\end{proof}

By \eqref{5j7u7ugt5} and \eqref{45t46yy5t}, $H_{Y_n}(\xi)=\frac{S_{\xi|k_n}}{\sqrt{m_n}}$. Thus, similar to the argument of \eqref{eq:path-L1}, we have
\begin{equation}
  \Psi_A(Y_n)
  = \int_{\partial\mathbb{T}_b}g_A(H_{Y_n}(\xi))\,\mathbf m_b(\mathrm{d}\xi)=
  b^{-k_n}\sum_{|u|=k_n}
  \nu\!\left(A-\frac{S_u}{\sqrt{m_n}}\right).\nonumber
\end{equation}
The following lemma shows that $\{\Psi_A(Y_n)\}_{n\geq1}$ can be a replacement of $\{R_n\}_{n\geq1}$, which implies that they are exponentially
equivalent.
\begin{lemma}
\label{lem:kernel-replacement}
Suppose that the branching random walk $\{Z_{n}\}_{n\geq0}$ is defined on the probability space $(\Omega,\mathcal{F},\mathbb{P})$. Then,
\[
  \lim_{n\to\infty}\sup_{\omega\in \Omega}|R_n-\Psi_A(Y_n)|=0.
\]
Consequently, \(\{R_n\}_{n\geq1}\) and \(\{\Psi_A(Y_n)\}_{n\geq1}\) are exponentially equivalent.
\end{lemma}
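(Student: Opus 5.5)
We compare the two quantities term by term. Since $|Z_{k_n}|=b^{k_n}$,
\[
  R_n-\Psi_A(Y_n)
  =
  b^{-k_n}\sum_{|u|=k_n}
  \Bigl[\nu_{m_n}\bigl(\sqrt n\,A-S_u\bigr)
  -\nu\bigl(A-S_u/\sqrt{m_n}\bigr)\Bigr],
\]
so it suffices to show that
\[
  \sup_{z\in\mathbb R}\bigl|\nu_{m_n}(\sqrt n\,A-z)-\nu(A-z/\sqrt{m_n})\bigr|\to0 .
\]
This bound is deterministic and uniform in the configuration $\omega$, because the positions $S_u$ enter only through the arbitrary shift $z$.

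To prove it, we rewrite $\sqrt n\,A-z=\sqrt{m_n}\,(a_nA-z')$, where $a_n:=\sqrt{n/m_n}$ and $z':=z/\sqrt{m_n}$. Since $k_n=O(\log n)$, we have $a_n\to1$, so $a_n\in[\ell^{-1},\ell]$ for, say, $\ell=2$ and all large $n$. The first part of Lemma~\ref{lem:uniform-CLT}, applied with $n$ replaced by $m_n\to\infty$, gives
\[
  \sup_{z'\in\mathbb R}\bigl|\nu_{m_n}\bigl(\sqrt{m_n}(a_nA-z')\bigr)-\nu(a_nA-z')\bigr|\to0 .
\]
Next, the uniform continuity of $(a,z)\mapsto\nu(aA-z)$ on $[1/2,2]\times\mathbb R$, from the second part of Lemma~\ref{lem:uniform-CLT}, gives
\[
  \sup_{z'}\bigl|\nu(a_nA-z')-\nu(A-z')\bigr|\to0,
\]
since the pairs $(a_n,z')$ and $(1,z')$ are at distance $|a_n-1|\to0$, uniformly in $z'$. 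By the triangle inequality the displayed supremum tends to $0$. Averaging over $u$ then yields $\sup_\omega|R_n-\Psi_A(Y_n)|\to0$.

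For exponential equivalence, fix $\delta>0$. For all $n$ large the event $\{|R_n-\Psi_A(Y_n)|>\delta\}$ is empty, so its probability is $0$ and $n^{-\alpha/2}\log 0=-\infty$ eventually. This is exactly Definition~\ref{4grtet6y6yf}.

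The only delicate point is the uniformity in $z$ combined with the mismatch between the scales $\sqrt n$ and $\sqrt{m_n}$. It is handled by absorbing the ratio $a_n$ into the dilation parameter of Lemma~\ref{lem:uniform-CLT}, which is precisely why that lemma is stated uniformly over $a$ in a compact subset of $(0,\infty)$. One should also note that $m_n\to\infty$, which holds because $k_n\le c\log n$.
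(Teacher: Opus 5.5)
Your proposal is correct and follows the paper's proof essentially step for step. The paper likewise absorbs the scale mismatch into the dilation $\rho_n=\sqrt{n/(n-k_n)}$ and then uses the two parts of Lemma~\ref{lem:uniform-CLT}: the uniform CLT at time $m_n$, plus uniform continuity of $(a,z)\mapsto\nu(aA-z)$. Since the resulting bound does not depend on $\omega$, the event $\{|R_n-\Psi_A(Y_n)|>\delta\}$ is empty for all large $n$, which gives exponential equivalence.
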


\begin{proof}
Put \(\rho_n:=\sqrt{n/(n-k_n)}\). Since $\lim_{n\to\infty}\rho_n=1$, for any fixed \(\ell>1\), one has
\(\rho_n\in[\ell^{-1},\ell]\) for all sufficiently large \(n\). Therefore,
\begin{align}\label{e5yhyt74}
  &|R_n-\Psi_A(Y_n)|\cr
  &=\left|\frac{1}{|Z_{k_n}|}\sum_{u\in Z_{k_n}}
  \nu_{n-k_n}\!\left(\sqrt n\,A-S_u\right)- b^{-k_n}\sum_{u\in Z_{k_n}}
  \nu\!\left(A-\frac{S_u}{\sqrt{n-k_n}}\right)\right|\cr
  &\leq
  \sup_{z\in\mathbb{R}}
  \left|
    \nu_{n-k_n}\!\left(\sqrt{n-k_n}\,(\rho_nA-z)\right)
    -\nu(\rho_nA-z)
  \right|\cr
  &+\sup_{z\in\mathbb{R}}|\nu(\rho_nA-z)-\nu(A-z)|,
\end{align}
which tends to $0$ by Lemma~\ref{lem:uniform-CLT}. Moreover, observe that the bound in \eqref{e5yhyt74} does not depend
on $\omega\in\Omega$. Thus, $\lim_{n\to\infty}\sup_{\omega\in \Omega}|R_n-\Psi_A(Y_n)|=0$. Hence, for every \(\delta>0\), there exists a deterministic integer $N$ such that for $n>N$,
\[
  \mathbb{P}(|R_n-\Psi_A(Y_n)|>\delta)=0.
\]
Thus, \(\{R_n\}_{n\geq1}\) and \(\{\Psi_A(Y_n)\}_{n\geq1}\) are exponentially equivalent.
\end{proof}
Now, we are ready to prove Theorem~\ref{thm:empirical-LDP}.
\begin{proof}[Proof of Theorem~\ref{thm:empirical-LDP}]

Recall that $\{Z_n(\sqrt n\,A)\}_{n\geq1}$ and $\{R_n\}_{n\geq1}$ are exponentially
equivalent (see Lemma~\ref{lem:conditional-concentration}), and $\{R_n\}_{n\geq1}$ and \(\{\Psi_A(Y_n)\}_{n\geq1}\) are exponentially
equivalent (see Lemma~\ref{lem:kernel-replacement}). This, together with the fact that
\begin{align*}
  &\mathbb{P}\!\left(
    |\overline Z_n(\sqrt n\,A)-\Psi_A(Y_n)|>\delta
  \right)\cr
  &\leq
  \mathbb{P}\!\left(
    |\overline Z_n(\sqrt n\,A)-R_n|>\delta/2
  \right)+
  \mathbb{P}\!\left(|R_n-\Psi_A(Y_n)|>\delta/2\right),
\end{align*}
yields that
\(\{\overline Z_n(\sqrt n\,A)\}_{n\geq1}\) and \(\{\Psi_A(Y_n)\}_{n\geq1}\) are exponentially
equivalent with speed $n^{\alpha/2}$. By Lemma~\ref{lem:exp-equivalence-transfer}, it suffices to consider the LDP for \(\{\Psi_A(Y_n)\}_{n\geq1}\). In Theorem~\ref{thm:tree-field-LDP}, we have proved that  \(\{Y_n\}_{n\geq1}\) satisfy a full LDP with speed
\(n^{\alpha/2}\) and good rate function \(\mathcal{I_{\alpha}}\). Since $\Psi_A$ is continuous (see Lemma \ref{lem:Psi-Lipschitz}), by Lemma \ref{lem:contraction}, we get that $\{\Psi_A(Y_n)\}_{n\geq1}$
satisfy a full LDP with speed \(n^{\alpha/2}\) and good rate function
$$
\inf\{\mathcal{I_{\alpha}}(h):\Psi_A(h)=p, h\in\mathcal{H}_b\}=Q_A(p).
 $$
The infimum is attained by Proposition~\ref{prop:cost-bounds} (iii).
\end{proof}

\subsection{Properties of the rate function \texorpdfstring{\(Q_A\)}{QA}}\label{dtgxbgfdgge}

Recall that
\[
  m_A=\inf_{x\in\mathbb{R}}g_A(x),
  \qquad
  M_A=\sup_{x\in\mathbb{R}}g_A(x),
  \qquad
  D_A=\{g_A(x):x\in\mathbb{R}\}
\]
and
$$
 Q_A(q)
  =
  \inf\Big\{\lambda\sum_{v\neq\varnothing}|h_v|^\alpha:h\in\mathcal{H}_b,\ \Psi_A(h)=q\Big\}.
  $$
We point out that if
\(q\in D_A\), then the following setting implies that $\Psi_A(h)=q$ has uncountably many feasible fields. Since \(q\in D_A\), we can choose $x$ such that $g_A(x)=q.$ For every \(s\in\mathbb{R}\),
set the field
\begin{align}\label{rsgfdgty5y3}
  h_v^{(s)}:=
  \begin{cases}
    s,&|v|=1,\\
    x-s,&|v|=2,\\
    0,&|v|\geq3,
  \end{cases}
\end{align}
which belongs to \(\mathcal{H}_b\). Hence, \(H_{h^{(s)}}\equiv x\) and
\(\Psi_A(h^{(s)})=q\) for any \(s\in\mathbb{R}\). In the following, we present several properties of $Q_A$.

\begin{proposition}
\label{prop:QA-regularity}
The rate function \(Q_A\) has the following properties.\\
(i) \(\{q\in[0,1]:Q_A(q)\in[0,\infty)\}=D_A\), and \(Q_A(q)=0\) if and only if
  \(q=\nu(A)\).\\
  (ii) \(Q_A\) is strictly decreasing on \(D_A\cap[m_A,\nu(A)]\) and strictly
  increasing on  \(D_A\cap[\nu(A),M_A]\).\\
   (iii) \(Q_A\) is lower semicontinuous on $[0,1]$. Hence, it is right-continuous on \(D_A\cap[m_A,\nu(A))\) and left-continuous on \(D_A\cap(\nu(A),M_A]\).\\
  (iv) The discontinuities of \(Q_A\) are jumps and form an at most countable set.
\end{proposition}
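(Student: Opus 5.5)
The plan is to derive all four items from three ingredients: the averaging structure of \(\Psi_A\), attainment of the infimum (Proposition~\ref{prop:cost-bounds}~(iii)), and a scaling argument along rays \(t\mapsto th\). Throughout I use that \(D_A\) is an interval, since \(g_A\) is continuous; by \eqref{4gr5yhy6yr4} it has endpoints \(m_A,M_A\).

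\textbf{Step (i).} I will show that \(\Psi_A(h)\in D_A\) for every \(h\in\mathcal{H}_b\). Since \(\Psi_A(h)\) is an \(\mathbf m_b\)-average of \(g_A(H_h(\xi))\), we have \(\Psi_A(h)\in[m_A,M_A]\). If \(\Psi_A(h)=M_A\), then \(g_A(H_h(\xi))=M_A\) for \(\mathbf m_b\)-a.e.\ \(\xi\), because \(M_A-g_A(H_h)\geq0\) has zero integral. Hence \(M_A\) is attained by \(g_A\), so \(M_A\in D_A\); the same holds for \(m_A\). Combined with \eqref{4gr5yhy6yr4}, this gives \(\Psi_A(\mathcal{H}_b)\subset D_A\), so \(Q_A=\infty\) off \(D_A\).

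Conversely, for \(q\in D_A\), the one-generation field \(h_v=x\mathbf 1_{\{|v|=1\}}\) with \(g_A(x)=q\) satisfies \(\Psi_A(h)=q\) and \(\mathcal{I_\alpha}(h)=\lambda b|x|^\alpha<\infty\). For the zero set: \(\mathcal{I_\alpha}(h)=0\) if and only if \(h=\mathbf 0\), and \(\Psi_A(\mathbf 0)=\nu(A)\). Since the infimum is attained whenever finite, \(Q_A(q)=0\) forces an optimizer with zero energy, hence \(q=\nu(A)\).

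\textbf{Step (ii).} This is the main step. Take \(q<q'\) in \(D_A\cap[\nu(A),M_A]\) and an optimizer \(h'\) for \(Q_A(q')\), which exists by Proposition~\ref{prop:cost-bounds}~(iii).

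\begin{enumerate}
\item The map \(t\mapsto\Psi_A(th')\) is continuous on \([0,1]\) by Lemma~\ref{lem:Psi-Lipschitz}.
\item It equals \(\nu(A)\) at \(t=0\) and \(q'\) at \(t=1\).
\item By the intermediate value theorem there is \(t\in[0,1)\) with \(\Psi_A(th')=q\).
\item Then \(Q_A(q)\leq\mathcal{I_\alpha}(th')=t^\alpha\mathcal{I_\alpha}(h')<\mathcal{I_\alpha}(h')=Q_A(q')\).
\end{enumerate}

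The strict inequality in the last line uses \(\mathcal{I_\alpha}(h')>0\), which follows from (i) since \(q'\neq\nu(A)\). The case \(t=0\) (that is, \(q=\nu(A)\)) is covered by the same inequality. The decreasing side \(D_A\cap[m_A,\nu(A)]\) is symmetric. The only delicate point is securing an optimizer, which is exactly what attainment provides; without it, one would have to run the argument with near-optimizers and lose strictness.

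\textbf{Steps (iii)--(iv).} Lower semicontinuity is Proposition~\ref{prop:cost-bounds}~(ii). On \(D_A\cap(\nu(A),M_A]\), \(Q_A\) is increasing, so for \(q_n\uparrow q\) the values \(Q_A(q_n)\) increase to a limit that is \(\leq Q_A(q)\). Lower semicontinuity gives \(Q_A(q)\leq\liminf_n Q_A(q_n)\), so the left limit equals \(Q_A(q)\), i.e.\ \(Q_A\) is left-continuous there; right-continuity on the decreasing side is analogous.

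For (iv), \(Q_A\) is monotone on each of the two intervals \(D_A\cap[m_A,\nu(A)]\) and \(D_A\cap[\nu(A),M_A]\). Monotone functions have one-sided limits in \([0,\infty]\) everywhere, so their discontinuities are jumps and form a countable set. At \(\nu(A)\) itself, \(Q_A\) has a minimum value \(0\), so it is continuous there by lower semicontinuity. Off \(D_A\), \(Q_A\equiv\infty\), so the only additional discontinuities are at the at most two boundary points of \(D_A\) in \([0,1]\), where the discontinuity is again a jump to \(\infty\). Altogether the discontinuity set is at most countable and consists of jumps.
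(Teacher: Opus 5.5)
\textbf{Verdict: gap in (iv).} Your items (i)--(iii) are correct and follow the paper's route: attainment plus the ray $t\mapsto\Psi_A(th')$ for (ii), and lower semicontinuity plus monotonicity for (iii). Your endpoint argument in (i) is in fact more careful than the paper's appeal to $D_A$ being an interval: $\Psi_A(h)=M_A$ forces $g_A(H_h)=M_A$ a.e., hence $M_A\in D_A$.

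The gap is at the point $\nu(A)$, where the two monotone branches meet. You assert that because $Q_A(\nu(A))=0$ is the minimum, lower semicontinuity makes $Q_A$ continuous there. But lower semicontinuity at a minimizer only gives $\liminf_{q\to\nu(A)}Q_A(q)\geq 0$, which is vacuous; continuity would need the reverse inequality. The conclusion is also false in general. Take $0<\alpha\leq1$ and $A$ such that $0$ is a strict local but non-global maximizer of $g_A$, for instance $A=A_\ell-x_\ell$ with $A_\ell,x_\ell$ as in Example~\ref{prop:genuine-jump}. Every $h$ with $\mathcal{I_{\alpha}}(h)\leq\lambda\varepsilon^\alpha$ satisfies $|H_h(\xi)|\leq\sum_{k\geq1}|h_{\xi|k}|\leq\varepsilon$ for all $\xi$. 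Hence $\Psi_A(h)\leq g_A(0)=\nu(A)$ once $\varepsilon$ is small. So $Q_A(q)\geq\lambda\varepsilon^\alpha$ for every $q>\nu(A)$, while $Q_A(\nu(A))=0$, and $Q_A$ is discontinuous at $\nu(A)$.

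\textbf{What (iv) actually needs.} At $\nu(A)$ you only need that a discontinuity there is not removable. That is, the two one-sided limits, which exist by monotonicity, cannot coincide at a common positive value. The paper gets this from two facts:
\begin{itemize}
\item the bound $Q_A(g_A(x))\leq\lambda b|x|^\alpha$, which is exactly your one-generation field;
\item $g_A$ is real analytic and nonconstant, so $g_A(x)-g_A(0)\sim cx^k$ with $c\neq0$.
\end{itemize}
The second fact yields $x_n\to0$ with $g_A(x_n)\to\nu(A)$ strictly from one side. Monotonicity of $Q_A$ on that side, together with $Q_A(g_A(x_n))\leq\lambda b|x_n|^\alpha\to0$, then forces the corresponding one-sided limit to be $0$. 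In the example above this gives left limit $0$ and right limit positive, i.e.\ a genuine jump.

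Replacing your lower-semicontinuity sentence with this argument repairs (iv). The rest of your treatment of (iv) can stay as is.
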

\begin{proof}
For every \(q\in D_A\), by \eqref{rsgfdgty5y3}, we have $Q_A(q)\leq \lambda b|s|^{\alpha}+\lambda b^2|x-s|^{\alpha}<\infty$. Thus,
\(D_A\subset\{q\in[0,1]:Q_A(q)\in[0,\infty)\}\). It remains to show $D_A\supset\{q\in[0,1]:Q_A(q)\in[0,\infty)\}$. Suppose that $Q_A(q)<\infty$. Then, by Proposition~\ref{prop:cost-bounds} (iii), there exists $h$ such that
$$q=\Psi_A(h)= \int_{\partial\mathbb{T}_b}g_A(H_h(\xi))\,\mathbf m_b(\mathrm{d}\xi)\in D_A,$$
where the last inclusion follows because \(D_A\) is an interval. Hence, \(\{q\in[0,1]:Q_A(q)\in[0,\infty)\}=D_A\).
Note that we can choose \(h=\mathbf{0}\)
such that \(\Psi_A(h)=\Psi_A(\mathbf{0})=\nu(A)\).
Thus, $Q_A(\nu(A))=0$. Conversely, if $Q_A(q)=0$, then
by Proposition~\ref{prop:cost-bounds} (iii), there exists $h$
such that $\Psi_A(h)=q$ and $\sum_{v\neq\varnothing}|h_v|^\alpha=0$,
which means $h=\mathbf{0}.$ Thus, $q=\Psi_A(h)=\Psi_A(\mathbf{0})=\nu(A)$. We have completed the proof of (i).
\par

We proceed to deal with (ii). Without loss of generality, we only show
$Q_A$ is strictly increasing on \(D_A\cap[\nu(A),M_A]\). Let \(q_1<q_2\) and $q_1,~q_2\in D_A\cap[\nu(A),M_A]$.
By Proposition~\ref{prop:cost-bounds} (iii), we can choose \(h_2\) attaining \(Q_A(q_2)\).
The continuous function \(l(t)=\Psi_A(th_2),~t\geq0\) satisfies that $l(0)=\nu(A)$ and $l(1)=q_2$. Hence, there exists \(t\in[0,1)\) such that
\(\Psi_A(th_2)=q_1\). By the definition of $Q_A$,
\[
  Q_A(q_1)\leq\mathcal{I_{\alpha}}(th_2)=t^\alpha Q_A(q_2)<Q_A(q_2).
\]
This proves (ii).
\par
From Proposition \ref{prop:cost-bounds} (ii), $Q_A$ is lower semicontinuous. This, combined with the monotonicity proved in (ii), concludes the one-sided continuity. Thus, (iii) holds.
\par
Next, we are going to deal with (iv). Since a monotone function has at most countably many discontinuities, by (iii), it suffices to show that $q=\nu(A)$ cannot be a removable discontinuity point of $Q_A$. To this end, we show that either $\lim_{q\to\nu(A)+}Q_A(q)=Q_A(\nu(A))$ or $\lim_{q\to\nu(A)-}Q_A(q)=Q_A(\nu(A))$ holds. By Assumption \ref{45twzg5yy43} (4), \(g_A\) is nonconstant and real analytic. Thus, there exist \(k\geq1\) and \(c\neq0\) such that
$$\lim_{x\to0}\frac{g_A(x)-g_A(0)}{x^k}=c;$$
see \cite[Sections~1.1--1.2]{KrantzParks}. We only consider $c>0$ here, for $c<0$, the proofs are similar. Then there exists a non-negative sequence $\{x_n\}_{n\geq1}$ such that $\lim_{n\to\infty}x_n=0$, $g_A(x_n)>g_A(0),~n\geq1$ and $\lim_{n\to\infty}g_A(x_n)= \nu(A)=g_A(0)$. Similar to \eqref{tryj66tyj}, we can get \(Q_A(g_A(x))\leq\lambda b|x|^\alpha\) for every \(x\in\mathbb R\). Thus, it follows that
$$0\leq\lim_{q\to\nu(A)+}Q_A(q)=\lim_{n\to\infty}Q_A(g_A(x_n))\leq \lim_{n\to\infty}\lambda b|x_n|^\alpha=0,$$
where the second equality follows from the monotonicity of $Q_A$ on \(D_A\cap[\nu(A),M_A]\). Hence, $\lim_{q\to\nu(A)+}Q_A(q)=0=Q_A(\nu(A)).$
\end{proof}
\begin{remark}
  We also point that \(Q_A\) may not convex. To see this, set \(A=(-\infty,0]\) and \(0<\alpha<1\). Recall that $\Phi$ is the distribution function of a standard normal random
variable. Let $\Phi^{-1}$ be the inverse function of $\Phi$. For $0.5<p<0.5+{(\Phi(\sqrt{1-\alpha})-0.5)/}{b}$, we have \(0<t^2<1-\alpha\), where $t=\Phi^{-1}\left(bp-(b-1)/2\right)$. Moreover, since \(\Phi(0)=0.5\), we have
\(0<s:=b(p-\Phi(0))<\Phi(\sqrt{1-\alpha})-0.5<0.5=1-\Phi(0)\). Thus, the conditions of Example \ref{5tre6yy6yu} are satisfied. Hence,
$
Q_A(p)=\lambda t^\alpha.
$
Since
\[
Q_A''(p)
=
\frac{\lambda\alpha b^2t^{\alpha-2}}
     {\varphi(t)^2}
\left(t^2+\alpha-1\right)
<0,
\]
\(Q_A\) is strictly concave on $(0.5,0.5+{(\Phi(\sqrt{1-\alpha})-0.5)/}{b})$. Therefore $Q_A$ is not convex in general.
\end{remark}

\subsection{Proof of Theorem~\ref{thm:main-one-sided}}
Now, we are ready to prove Theorem~\ref{thm:main-one-sided}, which is a direct application of Theorem \ref{thm:empirical-LDP}.
\begin{proof}[Proof of Theorem~\ref{thm:main-one-sided}]
In Theorem \ref{thm:empirical-LDP}, set $\Gamma=[p,1]$. Then, it follows that
\begin{align*}
  -Q_A(p+)=-\inf_{q\in(p,1]}Q_A(q)
  &\leq
  \liminf_{n\to\infty}n^{-\alpha/2}
  \log\mathbb{P}\!\left(\overline Z_n(\sqrt n\,A)\geq p\right)\\
  &\leq
  \limsup_{n\to\infty}n^{-\alpha/2}
  \log\mathbb{P}\!\left(\overline Z_n(\sqrt n\,A)\geq p\right)\\
  &\leq
  -\inf_{q\in[p,1]}Q_A(q)=-Q_A(p),
\end{align*}
where the first and last equalities follow from the fact that $Q_A$ is increasing on $[\nu(A),M_A)$ (see Proposition \ref{prop:QA-regularity} (ii)).
\end{proof}

\section{Continuity criteria and examples}\label{5yehye6t5t}
Theorem \ref{thm:main-one-sided} involves the two constants \(Q_A(p)\) and \(Q_A(p+)\). This section characterizes when they are equal, and gives sufficient conditions for it. Moreover, we also present both continuity examples and counter-examples for $Q_A$.

\subsection{Necessary and sufficient criteria}
In this section, we shall give several criteria for $Q_A(p)=Q_A(p+)$. For $h\in\mathcal{H}_b$, put
\[
  \|h\|_{\ell^\alpha}
  :=\left(\sum_{v\neq\varnothing}|h_v|^\alpha\right)^{1/\alpha},
\]
with $\|h\|_{\ell^\alpha}=\infty$ when the series diverges. $\|\cdot\|_{\ell^\alpha}$ is a quasi-norm when \(0<\alpha<1\) and a norm when $\alpha\geq1$. For \(p\in(\nu(A),M_A)\), define the sets of optimal minimizers of $Q_A(p)$ by
\[
  \mathcal O_A(p)
  :=\{h\in\mathcal{H}_b:\Psi_A(h)=p,\ \mathcal{I_{\alpha}}(h)=Q_A(p)\},
\]
and the finite tree energy fields by
\[
  \mathcal U_A(p)
  :=\{h\in\mathcal{H}_b:\mathcal{I_{\alpha}}(h)<\infty,\ \Psi_A(h)>p\}.
\]
Define the distance between $\mathcal O_A(p)$ and $\mathcal U_A(p)$ by
\begin{equation}\label{eq:escape-radius}
  \rho_{\alpha,A}(p)
  :=\inf\left\{
    \|\widetilde h-h\|_{\ell^\alpha}:
    h\in\mathcal O_A(p),\ \widetilde h\in\mathcal U_A(p)
  \right\}.
\end{equation}
Note that \(\mathcal O_A(p)\) and $\mathcal U_A(p)$ are nonempty by
Proposition~\ref{prop:cost-bounds} (iii).

\begin{theorem}[Continuity criterion]
\label{thm:optimizer-escape}
For every \(\alpha>0\) and \(p\in(\nu(A),M_A)\), the following assertions are equivalent:\\
(i) \(Q_A(p+)=Q_A(p)\);\\
(ii) \(\rho_{\alpha,A}(p)=0\);\\
(iii) there exist \(h_p\in\mathcal O_A(p)\) and \(\{h_n\}_{n\geq1}\subset\mathcal U_A(p)\) such that
  \[
    \lim_{n\to\infty}\|h_n-h_p\|_{\ell^\alpha}=0.
  \]
\end{theorem}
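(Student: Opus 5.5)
The plan is to prove the cycle (ii)$\Rightarrow$(iii)$\Rightarrow$(i)$\Rightarrow$(ii). Two preliminary facts come first. One is continuity of $\mathcal{I_{\alpha}}$ with respect to $\|\cdot\|_{\ell^\alpha}$ on finite-energy fields. For $0<\alpha\le1$ we have $\bigl|\,|x|^\alpha-|y|^\alpha\bigr|\le|x-y|^\alpha$, hence $|\mathcal{I_{\alpha}}(\widetilde h)-\mathcal{I_{\alpha}}(h)|\le\lambda\|\widetilde h-h\|_{\ell^\alpha}^\alpha$. For $\alpha>1$, Minkowski's inequality gives $\bigl|\|\widetilde h\|_{\ell^\alpha}-\|h\|_{\ell^\alpha}\bigr|\le\|\widetilde h-h\|_{\ell^\alpha}$. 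In both cases $\|h_n-h\|_{\ell^\alpha}\to0$ implies $\mathcal{I_{\alpha}}(h_n)\to\mathcal{I_{\alpha}}(h)$. The other fact is that $\|\cdot\|_{\ell^\alpha}$-convergence implies $\|\cdot\|_b$-convergence; this follows from the estimates in the proof of Proposition~\ref{prop:cost-bounds}(i), namely $\|h\|_b\le b^{-1}\|h\|_{\ell^\alpha}$ for $\alpha\le1$ and the H\"older bound for $\alpha>1$. Combined with Lemma~\ref{lem:Psi-Lipschitz}, $\Psi_A$ is then continuous in $\|\cdot\|_{\ell^\alpha}$.

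For (ii)$\Rightarrow$(iii): $\rho_{\alpha,A}(p)=0$ yields pairs $(h^{(n)},\widetilde h_n)\in\mathcal O_A(p)\times\mathcal U_A(p)$ with $\|\widetilde h_n-h^{(n)}\|_{\ell^\alpha}\to0$, but (iii) asks for a single $h_p$. Every $h^{(n)}$ lies in the compact sublevel set $\{\mathcal{I_{\alpha}}\le Q_A(p)\}$ (Proposition~\ref{prop:energy-good}), so a subsequence converges in $\|\cdot\|_b$ to some $h_p$, and continuity of $\Psi_A$ together with lower semicontinuity of $\mathcal{I_{\alpha}}$ gives $h_p\in\mathcal O_A(p)$. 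The main obstacle is upgrading this to $\|\cdot\|_{\ell^\alpha}$-convergence of $h^{(n)}$ to $h_p$, since the compactness is only in the weaker norm. Here the energies all equal $Q_A(p)=\mathcal{I_{\alpha}}(h_p)$ and coordinatewise convergence holds, so I would use a Brezis--Lieb/Radon--Riesz (Scheff\'e-type) argument. For any finite set $F$ of vertices, $\sum_{v\in F}|h^{(n)}_v|^\alpha\to\sum_{v\in F}|h_{p,v}|^\alpha$, so the tail energies $\sum_{v\notin F}|h^{(n)}_v|^\alpha$ become uniformly small once $F$ is chosen with small tail for $h_p$. Splitting $\|h^{(n)}-h_p\|_{\ell^\alpha}$ into the part on $F$ and the part off $F$, and using the quasi-triangle inequality for $\alpha<1$, then gives $\|h^{(n)}-h_p\|_{\ell^\alpha}\to0$. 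The triangle (or quasi-triangle) inequality then shows $\widetilde h_{n}\to h_p$ in $\ell^\alpha$ along the subsequence, which proves (iii).

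For (iii)$\Rightarrow$(i): by the first preliminary fact, $\mathcal{I_{\alpha}}(h_n)\to Q_A(p)$. Each $q_n:=\Psi_A(h_n)>p$ satisfies $q_n\to\Psi_A(h_p)=p$, and $q_n\in D_A$ by Proposition~\ref{prop:QA-regularity}(i), with $q_n<M_A$ eventually. Hence $Q_A(q_n)\le\mathcal{I_{\alpha}}(h_n)$, and by monotonicity (Proposition~\ref{prop:QA-regularity}(ii)) $Q_A(p+)=\lim_n Q_A(q_n)\le Q_A(p)$. The reverse inequality $Q_A(p+)\ge Q_A(p)$ is also monotonicity, which gives (i).

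For (i)$\Rightarrow$(ii): take $q_n\downarrow p$ and minimizers $h_n$ of $Q_A(q_n)$ (Proposition~\ref{prop:cost-bounds}(iii)); these lie in $\mathcal U_A(p)$ and satisfy $\mathcal{I_{\alpha}}(h_n)=Q_A(q_n)\to Q_A(p)$. By compactness a subsequence converges in $\|\cdot\|_b$ to some $h$ with $\Psi_A(h)=p$ and $\mathcal{I_{\alpha}}(h)\le Q_A(p)$, so $h\in\mathcal O_A(p)$. Since the energies converge to $\mathcal{I_{\alpha}}(h)$, the same Radon--Riesz argument as above gives $\|h_n-h\|_{\ell^\alpha}\to0$, hence $\rho_{\alpha,A}(p)=0$.
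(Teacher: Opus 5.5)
Your proof is correct, but you arrange the cycle of implications differently from the paper. The paper proves (ii)$\Rightarrow$(i)$\Rightarrow$(iii)$\Rightarrow$(ii).

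\begin{itemize}
\item For (ii)$\Rightarrow$(i) it works directly with the approximating pairs $(h_n,\widetilde h_n)\in\mathcal O_A(p)\times\mathcal U_A(p)$. It uses only continuity of $\mathcal{I_{\alpha}}$ in $\|\cdot\|_{\ell^\alpha}$ and the identity $Q_A(p+)=\inf\{\mathcal{I_{\alpha}}(h):\Psi_A(h)>p\}$, so it never needs a single limiting optimizer.
\item (iii)$\Rightarrow$(ii) is then immediate from the definition of $\rho_{\alpha,A}(p)$.
\item As a result, the compactness-plus-energy-convergence upgrade from $\|\cdot\|_b$ to $\|\cdot\|_{\ell^\alpha}$ is needed only once, in (i)$\Rightarrow$(iii).
\end{itemize}

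Your ordering forces you to do that upgrade twice. You use it once in (ii)$\Rightarrow$(iii), to extract a single $h_p$ from the sequence of optimizers, and again in (i)$\Rightarrow$(ii). Both applications are valid:
\begin{itemize}
\item coordinatewise convergence follows from $\|\cdot\|_b$-convergence;
\item in (ii)$\Rightarrow$(iii) all energies equal $Q_A(p)=\mathcal{I_{\alpha}}(h_p)$;
\item the tail splitting with $|x-y|^\alpha\le 2^{(\alpha-1)^+}(|x|^\alpha+|y|^\alpha)$ goes through exactly as you describe.
\end{itemize}
So the only cost is length. As a by-product, your (ii)$\Rightarrow$(iii) step in effect shows that $\mathcal O_A(p)$ is sequentially compact in the $\ell^\alpha$ topology, a slightly stronger structural fact than the paper records.

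Your (iii)$\Rightarrow$(i) is essentially the paper's (ii)$\Rightarrow$(i) specialized to a single optimizer. You are somewhat more explicit there: you check that $\Psi_A(h_n)\to p$ via the continuous embedding of $\ell^\alpha$ into $(\mathcal H_b,\|\cdot\|_b)$, and that the levels $q_n=\Psi_A(h_n)$ lie in $D_A$ below $M_A$. This lets you invoke the monotonicity of $Q_A$. The paper absorbs both points into the identity $Q_A(p+)=\inf\{\mathcal{I_{\alpha}}(h):\Psi_A(h)>p\}$.
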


\begin{proof}
We begin with \textup{(ii)}\(\Rightarrow\)\textup{(i)}. Because \(Q_A\) is increasing on \((\nu(A),M_A)\), by \eqref{eq:QA},
\begin{equation}\label{eq:open-cost-levelwise}
  Q_A(p+)
  =\inf_{q>p}Q_A(q)
  =\inf\{\mathcal{I_{\alpha}}(h): h\in\mathcal{H}_b, \Psi_A(h)>p\}.
\end{equation}
Since (ii) holds, we can choose
\(h_n\in\mathcal O_A(p)\) and
\(\widetilde h_n\in\mathcal U_A(p)\) such that
\(\lim_{n\to\infty}\|\widetilde h_n-h_n\|_{\ell^\alpha}=0\). The map
\(h\mapsto\lambda\|h\|_{\ell^\alpha}^\alpha\) is continuous in $\|\cdot\|_{\ell^\alpha}$
topology for every \(\alpha>0\) (by subadditivity when
\(0<\alpha\leq1\), and by norm continuity when \(\alpha>1\)). Hence,
\[
  \lim_{n\to\infty}\mathcal{I_{\alpha}}(\widetilde h_n)-\mathcal{I_{\alpha}}(h_n)=0.
\]
Since
\(\mathcal{I_{\alpha}}(h_n)=Q_A(p)\) and \(\Psi_A(\widetilde h_n)>p\),
\[
  Q_A(p+)
  =\inf_{\substack{h\in\mathcal{H}_b\\\Psi_A(h)>p}}\mathcal{I_{\alpha}}(h)
  \leq\lim_{n\to\infty}\mathcal{I_{\alpha}}(\widetilde h_n)
  =Q_A(p),
\]
where the first equality follows from \eqref{eq:open-cost-levelwise}. This completes the proof of \textup{(ii)}\(\Rightarrow\)\textup{(i)}.
\par
The implication \textup{(iii)}\(\Rightarrow\)\textup{(ii)} follows directly from the definition of $\rho_{\alpha,A}(p)$; see \eqref{eq:escape-radius}. So, it remains to show \textup{(i)}\(\Rightarrow\)\textup{(iii)}. Since (i)
holds, choose \(q_n\downarrow p\) and minimizer \(h_n\) of \(Q_A(q_n)\)
such that
\[
  \Psi_A(h_n)=q_n,
  \quad
  \mathcal{I_{\alpha}}(h_n)=Q_A(q_n)\to Q_A(p)~\text{as}~n\to\infty.
\]
Similar to those arguments used in proving \eqref{4tgty6gr4tyr4}, there exists $h_p\in\mathcal{H}_b$ such that along a subsequence $\{n_j\}_{j\geq1}$,
\begin{align}\label{4tgty6yr4}
  \lim_{j\to\infty}\|h_{n_j}-h_p\|_b=0.
\end{align}
Continuity of \(\Psi_A\) (see Lemma \ref{lem:Psi-Lipschitz}) and lower
semicontinuity of \(\mathcal{I_{\alpha}}\) (see Proposition \ref{prop:energy-good}) give
\[
  \Psi_A(h_p)=p,
  \qquad
  Q_A(p)\leq\mathcal{I_{\alpha}}(h_p)
  \leq\liminf_{j\to\infty}\mathcal{I_{\alpha}}(h_{n_j})=Q_A(p),
\]
which implies \(h_p\in\mathcal O_A(p)\). Since $\mathcal{I_{\alpha}}(h_n)=Q_A(q_n)\to Q_A(p)=\mathcal{I_{\alpha}}(h_p)$, it follows that
\begin{equation}\label{eq:energy-sum-convergence}
  \lim_{j\to\infty}\sum_{v\neq\varnothing}|(h_{n_j})_v|^\alpha=
  \sum_{v\neq\varnothing}|(h_p)_v|^\alpha=Q_A(p)/\lambda<\infty.
\end{equation}
For any $M\geq1$, \eqref{4tgty6yr4} implies the coordinatewise convergence on the
finite set \(\{v:|v|\leq M\}\), which entails that
\begin{align}\label{ryhythy5}
  \lim_{j\to\infty}\sum_{|v|\leq M}|(h_{n_j})_v|^\alpha=\sum_{|v|\leq M}|(h_p)_v|^\alpha,~\lim_{j\to\infty}\sum_{|v|\leq M}|(h_{n_j})_v-(h_{p})_v|^\alpha=0.
\end{align}
This, combined with \eqref{eq:energy-sum-convergence}, concludes that
\[
  \lim_{j\to\infty}\sum_{|v|> M}|(h_{n_j})_v|^\alpha=\sum_{|v|> M}|(h_p)_v|^\alpha.
  \]
Thus, given any \(\eta>0\), we can choose \(M\) large enough such that
\begin{align}
&\limsup_{j\to\infty}\sum_{|v|>M}|(h_{n_j})_v|^\alpha\leq2\sum_{|v|>M}|(h_p)_v|^\alpha\leq\eta.\nonumber
\end{align}
Since $|x-y|^\alpha\leq 2^{(\alpha-1)^+}(|x|^\alpha+|y|^\alpha)$ for any $\alpha>0$, above yields that
\begin{align}\label{5gbrh6hyh6}
  &\limsup_{j\to\infty}
  \sum_{v\neq\varnothing}|(h_{n_j})_v-(h_p)_v|^\alpha\cr
 &\leq  \limsup_{j\to\infty}
  \sum_{|v|\leq M}|(h_{n_j})_v-(h_p)_v|^\alpha +\limsup_{j\to\infty}
  \sum_{|v|>M}|(h_{n_j})_v-(h_p)_v|^\alpha \cr
   &\leq \limsup_{j\to\infty}
  2^{(\alpha-1)^+}\sum_{|v|>M}\left[|(h_{n_j})_v|^\alpha+|(h_p)_v|^\alpha\right]\leq2^{1+(\alpha-1)^+}\eta,
\end{align}
where the second inequality follows from the second equality of \eqref{ryhythy5}.
Letting \(\eta\downarrow0\) in \eqref{5gbrh6hyh6} gives
\(\lim_{j\to\infty}\|h_{n_j}-h_p\|_{\ell^\alpha}=0\). Since
\(\Psi_A(h_{n_j})=q_{n_j}>p\), we have \(\{h_{n_j}\}_{j\geq1}\subset\mathcal U_A(p)\). This proves \textup{(iii)}.
\end{proof}
\begin{remark}
The topology in Theorem~\ref{thm:optimizer-escape} is dictated by the rate
function itself: convergence in \(\ell^\alpha\) makes the energies converge,
whereas convergence in the weaker tree-space norm \(\|\cdot\|_b\) need
not do so.
\end{remark}

\subsection{A sufficient criterion: absence of suboptimal local maxima}
The criterion in Theorem \ref{thm:optimizer-escape} is stated in terms
of the minimizers of $Q_A(p)$, which seems abstract.
The following condition involves only \(g_A\) and is easier to verify.

\begin{proposition}
\label{cor:no-suboptimal-maxima}
If every local maximizer of \(g_A\) is global, then
\[
  Q_A(q+)=Q_A(q)
  \qquad\text{for every }q\in(\nu(A),M_A).
\]
\end{proposition}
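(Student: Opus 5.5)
We verify criterion (iii) of Theorem~\ref{thm:optimizer-escape} directly. Fix $q\in(\nu(A),M_A)$ and a minimizer $h_q\in\mathcal O_A(q)$, which exists by Proposition~\ref{prop:cost-bounds}~(iii). Write $H:=H_{h_q}$. For $v\in\mathbb T_b^\circ$ let $e_v$ be the field equal to $1$ at $v$ and $0$ elsewhere. The competitors will be the one-coordinate perturbations $h_q+\varepsilon e_v$, for which
\[
\|(h_q+\varepsilon e_v)-h_q\|_{\ell^\alpha}=|\varepsilon|,
\qquad H_{h_q+\varepsilon e_v}=H+\varepsilon\mathbf 1_{[v]},
\]
so that
\[
\Psi_A(h_q+\varepsilon e_v)-q=\int_{[v]}\bigl[g_A(H+\varepsilon)-g_A(H)\bigr]\,\mathrm d\mathbf m_b .
\]
It therefore suffices to show: for arbitrarily small $|\varepsilon|$ there is a vertex $v$ (allowed to depend on $\varepsilon$) making this integral strictly positive. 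Then $h_n:=h_q+\varepsilon_n e_{v_n}\in\mathcal U_A(q)$ with $\|h_n-h_q\|_{\ell^\alpha}\to0$, and the energy stays finite.

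\emph{Case 1: $\int_{[v]}g_A'(H)\,\mathrm d\mathbf m_b\neq0$ for some $v$.} By dominated convergence, using the bound \eqref{eq:gA-explicit-Lipschitz} on $g_A'$,
\[
\varepsilon^{-1}\int_{[v]}\bigl[g_A(H+\varepsilon)-g_A(H)\bigr]\,\mathrm d\mathbf m_b\;\longrightarrow\;\int_{[v]}g_A'(H)\,\mathrm d\mathbf m_b \qquad(\varepsilon\to0).
\]
Choosing the sign of $\varepsilon$ to match the sign of this limit, and taking $|\varepsilon|$ small, makes the integral positive. This finishes Case 1 with $v$ fixed.

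\emph{Case 2: the integral vanishes for every $v$.} The cylinders generate $\mathcal F_\partial$, so $g_A'(H)=0$ $\mathbf m_b$-a.s., i.e.\ $H$ takes values in the critical set $C$ of $g_A$. Since $g_A$ is real analytic and nonconstant, $g_A'$ is nonconstant analytic and $C$ is countable.
\begin{itemize}
\item Because $q<M_A$, the event $\{g_A(H)<M_A\}$ has positive measure. This is automatic if $M_A$ is not attained.
\item On this event $H(\xi)$ is a critical point that is not a global maximizer. By hypothesis it is then not a local maximizer either.
\item By countability of $C$, there is $c\in C$ that is not a local maximizer and has $\mathbf m_b(H=c)>0$.
\end{itemize}
Since $c$ is not a local maximum, for every small $\eta>0$ there is $\varepsilon$ with $|\varepsilon|<\eta$ and $\gamma:=g_A(c+\varepsilon)-g_A(c)>0$.

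Fix such an $\varepsilon$; now choose $v$. By L\'evy's martingale convergence theorem applied to $\mathbf 1_{\{H=c\}}$ and the filtration generated by the cylinders, for $\mathbf m_b$-a.e.\ $\xi\in\{H=c\}$,
\[
\frac{\mathbf m_b([\xi|k]\cap\{H=c\})}{\mathbf m_b([\xi|k])}\to1 \qquad (k\to\infty).
\]
Pick $v=\xi|k$ with $k$ so large that $\mathbf m_b([v]\setminus\{H=c\})<\tfrac{\gamma}{2}\,\mathbf m_b([v])$. Since $0\le g_A\le1$, the points of $[v]$ with $H\neq c$ lose at most $\mathbf m_b([v]\setminus\{H=c\})$. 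Hence
\[
\Psi_A(h_q+\varepsilon e_v)-q\;\ge\;\gamma\,\mathbf m_b([v]\cap\{H=c\})-\mathbf m_b([v]\setminus\{H=c\})\;>\;0 .
\]

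The step I expect to be the main obstacle is exactly this last one. For a fixed vertex, the gain at a degenerate critical point can be of order $\varepsilon^{k}$ with $k\ge2$, while the loss from the rest of the cylinder is of order $|\varepsilon|$. The plan avoids this mismatch by first fixing $\varepsilon$ (hence $\gamma$) and only afterwards choosing a deep cylinder via the martingale density argument. This is legitimate because the $\ell^\alpha$-cost of the perturbation is $|\varepsilon|$ regardless of which $v$ is chosen. Combining the two cases gives (iii), and Theorem~\ref{thm:optimizer-escape} yields $Q_A(q+)=Q_A(q)$.
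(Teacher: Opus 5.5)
Your proof is correct, but it is organized differently from the paper's. Both arguments use the same one-vertex perturbations $h_q+te^{(u)}$, whose $\ell^\alpha$-cost is $|t|$ and whose effect on $\Psi_A$ is $\int_{[u]}F_t\,\mathrm d\mathbf m_b$ with $F_t:=g_A(H_{h_q}+t)-g_A(H_{h_q})$.

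The paper argues by contradiction. If $Q_A(q+)\neq Q_A(q)$, then via Theorem~\ref{thm:optimizer-escape} every $h_q\in\mathcal O_A(q)$ is an $\ell^\alpha$-local maximizer of $\Psi_A$ with some uniform radius $\varepsilon_0$. So for each fixed $|t|<\varepsilon_0$, all cylinder integrals of $F_t$ are $\le 0$. The monotone class theorem upgrades this to $F_t\le 0$ a.e. Intersecting over rational $t$ then shows that $H_{h_q}(\xi)$ is a local, hence global, maximizer of $g_A$ for a.e.\ $\xi$, which forces $q=M_A$.

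That route uses only continuity of $g_A$. It needs no first-order/critical case split and no quantitative localization, because the uniform radius lets it fix $t$ and conclude qualitatively. Your worry about degenerate critical points therefore never arises there.

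Your route is constructive: it exhibits an explicit improving vertex. It pays for this with the derivative computation in Case~1, with real analyticity of $g_A$ (to make the critical set countable and produce an atom $c$ of $H_{h_q}$), and with the L\'evy density estimate needed to beat the $O(1)$ loss on the rest of the cylinder.

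You could drop both the case split and analyticity by borrowing the paper's rational-$t$ trick. Every point of the positive-measure set $\{g_A(H_{h_q})<M_A\}$ is not a local maximizer, so this set is covered by the countably many sets $\{F_t>0\}$, $t\in\mathbb Q\cap(-\eta,\eta)$. Hence some rational $t$ and some $\gamma>0$ satisfy $\mathbf m_b(F_t>\gamma)>0$. Applying your density argument to $\{F_t>\gamma\}$ in place of $\{H_{h_q}=c\}$ then gives a cylinder $[v]$ with $\int_{[v]}F_t\,\mathrm d\mathbf m_b>0$.
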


\begin{proof}
Fix \(q\in(\nu(A),M_A)\). Throughout the proof, local maximality of $\Psi_A$ is understood in the \(\ell^\alpha\)-topology. We first claim that
\begin{equation}
Q_A(q+)=Q_A(q)\Longleftrightarrow
\exists\,h\in \mathcal O_A(q)\text{ that is not a local maximizer of }\Psi_A.
\label{eq:local-max-equivalence}
\end{equation}
We first show the $``\Rightarrow"$ direction. Suppose that $Q_A(q+)=Q_A(q)$. Then, by Theorem~\ref{thm:optimizer-escape}, there exist $h_q\in\mathcal O_A(q)$ and \(\{h_n\}_{n\geq1}\subset\mathcal U_A(q)\) such that
$\lim_{n\to\infty}\|h_n-h_q\|_{\ell^\alpha}=0$. Since $\Psi_A(h_n)>q$ and $\Psi_A(h_q)=q$, $h_q$ is not a local maximizer of $\Psi_A$, which means $``\Rightarrow"$ holds. We proceed to deal with the $``\Leftarrow"$ direction. Suppose that \(h\in \mathcal O_A(q)\) is not a local maximizer of \(\Psi_A\). Then, there exists \(\{h_n\}_{n\geq1}\subset\mathcal H_b\) such that \(\lim_{n\to\infty}\|h_n-h\|_{\ell^\alpha}=0\) and \(\Psi_A(h_n)>\Psi_A(h)=q,~n\geq1\). If \(0<\alpha\leq1\), then
\begin{equation*}
\mathcal{I_{\alpha}}(h_n)=\lambda\sum_{v\neq\varnothing}|(h_n)_v|^\alpha
\leq \mathcal{I_{\alpha}}(h)+\lambda \|h_n-h\|^{\alpha}_{\ell^\alpha}
<\infty,
\end{equation*}
where the last inequality follows from  $\mathcal{I_{\alpha}}(h)=Q_A(q)<\infty$ for \(h\in \mathcal O_A(q)\).
If \(\alpha>1\), then $\|\cdot\|_{\ell^\alpha}$ is a norm. Thus,
\begin{equation*}
\|h_n\|_{\ell^\alpha}
\leq
\|h\|_{\ell^\alpha}
+
\|h_n-h\|_{\ell^\alpha}
<\infty,
\end{equation*}
which means \(I_\alpha(h_n)<\infty\). Hence, for any $\alpha>0$, we have \(\{h_n\}_{n\geq1}\subset\mathcal U_A(q)\). So, by Theorem~\ref{thm:optimizer-escape}, we have $Q_A(q+)=Q_A(q)$.
\par
Now, we prove Proposition \ref{cor:no-suboptimal-maxima} by contradiction. Suppose that \(Q_A(q+)\neq Q_A(q)\). Then, by \eqref{eq:local-max-equivalence}, any \(h_q\in \mathcal O_A(q)\) is a local maximizer of \(\Psi_A\). Hence, there exists
\(\varepsilon_0=\varepsilon_0(h_q)>0\) such that
\begin{equation}\label{eq:local-max-Psi}
  \Psi_A(\widetilde h)\leq\Psi_A(h_q)=q
  \quad\text{for}
  \ \|\widetilde h-h_q\|_{\ell^\alpha}<\varepsilon_0.
\end{equation}
For \(u\in\mathbb{T}_b^\circ\), define the field
\(e^{(u)}=(e^{(u)}_v)_{v\in\mathbb{T}_b^\circ}\) by
\[
  e^{(u)}_v:=\mathbf{1}_{\{v=u\}},
  \qquad v\in\mathbb{T}_b^\circ.
\]
Then
\[
  H_{h_q+te^{(u)}}(\xi)
  =H_{h_q}(\xi)+t\mathbf{1}_{[u]}(\xi),
  \qquad
  \|te^{(u)}\|_{\ell^\alpha}=|t|.
\]
Set $\widetilde h=h_q+te^{(u)}$ in \eqref{eq:local-max-Psi}. Then, \eqref{eq:local-max-Psi} gives for every
\(|t|<\varepsilon_0\),
\begin{align}\label{5thtr6ye3}
  0
  &\geq \Psi_A(h_q+te^{(u)})-\Psi_A(h_q)\notag\\
  &=\int_{\partial\mathbb{T}_b}
  \bigl[g_A(H_{h_q}(\xi)+t\mathbf{1}_{[u]}(\xi))
         -g_A(H_{h_q}(\xi))\bigr]
  \,\mathbf m_b(\mathrm{d}\xi)\notag\\
  &=\int_{[u]}
  \bigl[g_A(H_{h_q}(\xi)+t)-g_A(H_{h_q}(\xi))\bigr]
  \,\mathbf m_b(\mathrm{d}\xi).
\end{align}
The last equality follows because
\(H_{h_q+te^{(u)}}=H_{h_q}\) on \([u]^c\).
Define
\[
  F_t(\xi)
  :=g_A(H_{h_q}(\xi)+t)-g_A(H_{h_q}(\xi)).
\]
\par

Next, we are going to show that $F_t\leq0$ $\mathbf m_b\text{-a.s.}$ for any given $t\in(-\varepsilon_0,\varepsilon_0)$. Define
\begin{align*}
\mathcal A
&:=
\{\varnothing\}
\cup
\left\{
\bigcup_{j=1}^{k}[u_j]:
k\geq1,\
u_1,\ldots,u_k\in\mathbb T_b^\circ
\right\}.
\end{align*}
Then \(\mathcal A\) is an algebra and $\sigma(\mathcal A)=\mathcal F_{\partial}$; see \eqref{54gt24g5gt}. For every \(B\in\mathcal A\), there exist \(N\geq1\) and
\(D\subset\{u\in\mathbb T_b:|u|=N\}\) such that
\begin{equation*}
B=\bigsqcup_{u\in D}[u],
\end{equation*}
where $\bigsqcup$ stands for the disjoint union. Consequently, by \eqref{5thtr6ye3},
\begin{equation*}
\int_BF_t(\xi)\mathbf m_b(\mathrm{d}\xi)
=
\sum_{u\in D}\int_{[u]}F_t(\xi)\mathbf m_b(\mathrm{d}\xi)
\leq0,
\qquad B\in\mathcal A.
\end{equation*}
Note that $\{B\in\mathcal F_{\partial}:\int_BF_t(\xi)\mathbf m_b(\mathrm{d}\xi)
\leq0\}\supset\mathcal A$ and is a monotone class. By the monotone class theorem, we obtain
\begin{equation*}
\int_BF_t(\xi)\mathbf m_b(\mathrm{d}\xi)
\leq0,
\quad B\in\mathcal F_{\partial},
\end{equation*}
which implies that $\mathbf m_b(\{\xi\in \partial\mathbb T_b:F_t(\xi)\leq 0\})=1.$
\par
Above yields that
$$\mathbf m_b\Big(\cap_{|t|<\varepsilon_0~\text{and}~t~\text{is~rational} }\{\xi\in \partial\mathbb T_b:F_t(\xi)\leq 0\}\Big)=1.$$
By the
continuity of \(g_A\), it follows that for almost every \(\xi\) and $|t|<\varepsilon_0$,
\[
  g_A(H_{h_q}(\xi)+t)\leq g_A(H_{h_q}(\xi)).
\]
In other words, for almost every \(\xi\), the real number
\(x=H_{h_q}(\xi)\) is a local maximizer of the same deterministic function
\(g_A\). Since we assume every local maximizer of $g_A$ is global in Proposition \ref{cor:no-suboptimal-maxima}, we have
\[
  q=\Psi_A(h_q)
  =\int_{\partial\mathbb{T}_b}g_A(H_{h_q}(\xi))\,\mathbf m_b(\mathrm{d}\xi)
  =M_A.
\]
This contradicts our setting \(q<M_A\). Therefore the supposition
\(Q_A(q+)\neq Q_A(q)\) is false, and \(Q_A(q+)=Q_A(q)\).
\end{proof}
\subsection{Examples}
In this subsection, we present three classes of sets such that $Q_A(p)=Q_A(p+)$ for every \(p\in(\nu(A),M_A)\) and a counter-example.
\begin{example}
\label{cor:concrete-continuity}
For every \(p\in(\nu(A),M_A)\), one has \(Q_A(p+)=Q_A(p)\) in each of the following cases:\\
  (i) \(A\) is a single interval, including a bounded interval or a half-line;\\
  (ii) \(A=(-\infty,a]\cup[b,\infty)\) for any \(a<b\);\\
  (iii) up to translation,
  \[A=[-b,-a]\cup[a,b],
    \qquad b>a>0.\]
\end{example}
\begin{proof}
We begin with (i). For a bounded interval \(A=[a,b]\),
\[
  g_A'(x)=\varphi(a-x)-\varphi(b-x)=\frac{1}{\sqrt{2\pi}}\left[e^{-(a-x)^2/2}-e^{-(b-x)^2/2}\right].
\]
Thus, \(g_A'>0\) on \(( -\infty,(a+b)/2)\) and
\(g_A'<0\) on \(((a+b)/2,\infty)\). This means \(g_A\) has a unique local maximum, which is also global. For a half-line, since \(g_A\) is strictly monotone on $\mathbb{R}$, \(g_A\) does not have a local maximum. So, by Proposition \ref{cor:no-suboptimal-maxima}, we have
\(Q_A(p+)=Q_A(p)\) for \(p\in(\nu(A),M_A)\).
\par
In case \textup{(ii)},
\[
  g_A(x)=1-[\Phi(b-x)-\Phi(a-x)].
\]
The midpoint \(x=(a+b)/2\) is the unique local minimum point, and there is no local
maximum in \(\mathbb{R}\). Thus the hypothesis of
Proposition~\ref{cor:no-suboptimal-maxima} holds vacuously.
\par
We proceed to deal with (iii). Note that translation only shifts the graph of \(g_A\) and therefore preserves whether every local maximizer is global. So, it suffices to take
  \[
    A=[-b,-a]\cup[a,b],
    \qquad 0<a<b.
\]
By symmetry of the Gaussian density $\varphi$, \(g_A\) is even and \(g_A'\) is odd. Thus, we only consider $g_A$ on $(0,\infty)$. For \(x>0\),
\begin{align}\label{54hgdhrtyt2}
R_s(x)&:=\varphi(s-x)-\varphi(s+x)>0{~\text{for}~s>0},\cr
  g_A'(x)&=R_a(x)-R_b(x)=R_b(x)\left(\frac{R_a(x)}{R_b(x)}-1\right).
\end{align}
Using \(\varphi'(y)=-y\varphi(y)\), a direct differentiation gives
\[
  \frac{\mathrm{d}}{\mathrm{d} x}\log\frac{R_a(x)}{R_b(x)}=a~\text{coth}(ax)-b~\text{coth}(bx)<0,
  \qquad x>0,
\]
where $a~\text{coth}(ax)=a{(e^{ax}+e^{-ax})/}{(e^{ax}-e^{-ax})}$ is increasing with respect to $a$ for fixed $x>0$.
Thus, \(R_a/R_b\) is strictly decreasing on $(0,\infty)$. Moreover, since $0<a<b$, we have
$$
\lim_{x\to+\infty}\frac{R_a(x)}{R_b(x)}=\lim_{x\to+\infty}\frac{e^{-\frac{(a-x)^2}{2}}-e^{-\frac{(a+x)^2}{2}}
}{e^{-\frac{(b-x)^2}{2}}-e^{-\frac{(b+x)^2}{2}}}=\lim_{x\to+\infty}\frac{e^{\frac{(b-x)^2}{2}-\frac{(a-x)^2}{2}}-e^{\frac{(b-x)^2}{2}-\frac{(a+x)^2}{2}}
}{1-e^{\frac{(b-x)^2}{2}-\frac{(b+x)^2}{2}}}=0.
$$
This, combined with \(R_a/R_b>0\) and the monotonicity of \(R_a/R_b\), implies that $R_a/R_b$ either stays below $1$ (when $\lim_{x\to0+}R_a(x)/R_b(x)
\leq1$) or crosses the level $1$ exactly once (when $\lim_{x\to0+}R_a(x)/R_b(x)>1$). Consequently, by \eqref{54hgdhrtyt2}, \(g_A'\) is either negative throughout
\((0,\infty)\), or
changes sign from positive to negative. Hence \(g_A\)
is either strictly decreasing on \((0,\infty)\), or first increases and
then decreases. Since \(g_A\) is even, its local maximizers are therefore
either \(0\), or one symmetric pair; in either case they are global.
The result follows from Proposition~\ref{cor:no-suboptimal-maxima}.
\end{proof}

The preceding example gives broad classes for which \(Q_A\) is right-continuous at $p$. The next
result gives a counter-example.

\begin{example}
\label{prop:genuine-jump}
Assume \(0<\alpha\leq1\). Fix \(0<a<c\). For
\(\ell>c\), define
\[
  A_\ell=[a,c]\cup[\ell,\infty).
\]
Then, for all sufficiently large \(\ell\), there exists
\(p_\ell\in(\nu(A_\ell),M_{A_\ell})\) such that
\[
  Q_{A_\ell}(p_\ell)<Q_{A_\ell}(p_\ell+).
\]
\end{example}

\begin{proof}
Write
\[
  g_\ell(x):=g_{A_\ell}(x)
  =\nu(A_{\ell}-x)=\Phi(c-x)-\Phi(a-x)+\Phi(x-\ell),
\]
and denote the contribution of the bounded interval by
\[
  f(x):=\Phi(c-x)-\Phi(a-x).
\]
Let $\mu:=(a+c)/2>0.$ Observe that \(f'>0\) on \((-\infty,\mu)\) and \(f'<0\) on
\((\mu,\infty)\). Thus \(f\) has a unique global maximum at \(\mu\).
Moreover, $f'(\mu)=0,~f''(\mu)<0.$ Choose \(\eta>0\) small enough and $\ell>0$ large enough such that \(f''<0\) and $g_\ell''<0$ on
\(I:=[\mu-\eta,\mu+\eta]\). Since $f'<0$ on $(\mu,\infty)$, for large $\ell$, we have
\begin{align}
  g_\ell'(\mu)&=f'(\mu)+\varphi(\mu-\ell)=\varphi(\mu-\ell)>0,\cr
  g_\ell'(\mu+\eta)&=f'(\mu+\eta)+\varphi(\mu+\eta-\ell)<0.\nonumber
\end{align}
 Since \(f\) has a unique global maximum at \(\mu\),
\[
  \delta_\eta
  :=f(\mu)-\sup_{|x-\mu|\geq\eta}f(x)>0.
\]
Since $g_\ell''<0$ on $I$, $g_\ell'$ is decreasing on $I$. Hence, there exists a unique \(x_\ell\in(\mu,\mu+\eta)\) such that \(g_\ell'(x_\ell)=0\). Set \(p_\ell:=g_\ell(x_\ell)\). Note that $g'_\ell>0$ on  $[\mu-\eta,x_\ell)$ and  $g'_\ell<0$ on $(x_\ell,\mu+\eta]$. Hence, \(g_\ell(x)>p_\ell=g_\ell(x_\ell)\) implies \(|x-\mu|\geq \eta\), which means
\[
  \Phi(x-\ell)
  =g_\ell(x)-f(x)
  >p_\ell-(f(\mu)-\delta_\eta)>g_\ell(\mu)-(f(\mu)-\delta_\eta)
  >\delta_\eta.
\]
Thus, if \(g_\ell(x)>p_\ell\), then
\begin{equation}\label{eq:jump-superlevel-location}
  x>\ell+\Phi^{-1}(\delta_\eta),
\end{equation}
where $\Phi^{-1}$ stands for the inverse function of $\Phi$.
\par
Next, we show that $p_\ell\in(\nu(A_\ell),M_{A_\ell})$. Since $g_\ell'>0$ on $[0,x_\ell)$, we have
\[
  \nu(A_\ell)
  =g_\ell(0)
  <g_\ell(x_\ell)
  =p_\ell.
\]
Since
\(\lim_{x\to+\infty}g_\ell(x)=\lim_{x\to+\infty}\nu(A_{\ell}-x)=1\), it follows that
\[
  M_{A_\ell}=1,
  \qquad
  p_\ell\in(\nu(A_\ell),M_{A_\ell}).
\]
\par

By Proposition \ref{prop:cost-bounds} (i), we have
\begin{equation}\label{eq:jump-closed-upper}
  Q_{A_\ell}(p_\ell)\leq\lambda b|x_\ell|^\alpha.
\end{equation}
We proceed to give a lower bound for $Q_{A_\ell}(p_\ell+)$. Notice that
$$
Q_{A_\ell}(p_\ell+)=\inf\Big\{\lambda\sum_{v\neq\varnothing}|h_v|^\alpha:h\in\mathcal{H}_b, \Psi_{A_\ell}(h)=\int_{\partial\mathbb{T}_b}g_{\ell}(H_h(\xi))\,\mathbf m_b(\mathrm{d}\xi)>p_\ell\Big\}.
$$
For any \(h\in\mathcal{H}_b\) satisfying
\(\Psi_{A_\ell}(h)>p_\ell\) and
\(\mathcal{I_{\alpha}}(h)<\infty\), it follows that
\[
  \mathbf m_b(g_\ell(H_h(\xi))>p_\ell)>0.
\]
Since \(0<\alpha\leq1\) and \(\mathcal{I_{\alpha}}(h)<\infty\), it follows that
\[
  \sum_{v\neq\varnothing}|h_v|<\infty.
\]
So, every path series converges absolutely.
Choose a ray \(\xi\in\{\xi:g_\ell(H_h(\xi))>p_\ell\}\). By \eqref{eq:jump-superlevel-location}, for $\ell$ large enough,
\[
  H_h(\xi)>\ell+\Phi^{-1}(\delta_\eta)>0.
\]
Using \(0<\alpha\leq1\), above yields that
\[
  \frac{\mathcal{I_{\alpha}}(h)}{\lambda}
  \geq\sum_{k\geq1}|h_{\xi|k}|^\alpha
  \geq
  \left(\sum_{k\geq1}|h_{\xi|k}|\right)^\alpha
  \geq|H_h(\xi)|^\alpha
  >
  \bigl(\ell+\Phi^{-1}(\delta_\eta)\bigr)^\alpha.
\]
Consequently,
\[
  Q_{A_\ell}(p_\ell+)
  \geq
  \lambda\bigl(\ell+\Phi^{-1}(\delta_\eta)\bigr)^\alpha,
\]
which tends to $+\infty$ as $\ell\to\infty$.
This, combined with the fact that \(\lim_{\ell\to\infty}x_\ell=\mu\) and \eqref{eq:jump-closed-upper},
gives that for all sufficiently large \(\ell\),
\[
  Q_{A_\ell}(p_\ell+)
  \geq
  \lambda\bigl(\ell+\Phi^{-1}(\delta_\eta)\bigr)^\alpha
  >
  \lambda b|x_\ell|^\alpha
  \geq
  Q_{A_\ell}(p_\ell).
\]
\end{proof}

\subsection{Explicit formulas}

\begin{theorem}\label{thm:dimension-reduction}
Suppose that, for some \(\sigma\in\{-1,1\}\), the map
\(t\mapsto g_A(\sigma t)\) is strictly increasing on
 \(\mathbb{R}\). Fix $p\in(\nu(A),M_A)$. Then \(Q_A(p+)=Q_A(p)\), and the following assertions hold.\\
(i) If \(0<\alpha\le1\), then
\begin{equation}\label{eq:first-generation}
  Q_A(p)
  =\lambda\min\left\{
      \sum_{i=1}^b t_i^\alpha:
      t_i\ge0,\quad
      \frac1b\sum_{i=1}^b g_A(\sigma t_i)\ge p
    \right\}.
\end{equation}
The minimum is attained.\\
(ii) If $\alpha\geq1$ and \(t\mapsto g_A(\sigma t)\) is concave on
\([0,\infty)\), then
\[
  Q_A(p)=
  \begin{cases}
    \lambda b I_A(p), & \alpha=1,\\[3pt]
    \lambda\bigl(b^{1/(\alpha-1)}-1\bigr)^{\alpha-1}I_A(p)^\alpha,
      & \alpha>1.
  \end{cases}
\]
\end{theorem}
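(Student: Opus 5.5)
The plan is to reduce to $\sigma=1$ by the reflection $h\mapsto -h$: it leaves $\mathcal I_\alpha$ unchanged, sends $H_h\mapsto -H_h$, and turns $g_A(-\cdot)$ into $g_{-A}$. So I assume $g_A$ is strictly increasing on $\mathbb R$.

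\textbf{Continuity and the relaxed problem.} A strictly increasing $g_A$ has no local maximizers, so the hypothesis of Proposition~\ref{cor:no-suboptimal-maxima} holds vacuously and $Q_A(p+)=Q_A(p)$. I then replace the equality constraint by the inequality constraint. By the monotonicity in Proposition~\ref{prop:QA-regularity}(ii), together with $Q_A=\infty$ above $M_A$ and $Q_A(p+)=Q_A(p)$,
\[
Q_A(p)=\inf\{\mathcal I_\alpha(h):\Psi_A(h)\ge p\},
\]
and this infimum is attained by Proposition~\ref{prop:cost-bounds}(iii).

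\textbf{Reduction to nonnegative fields.} Next I pass to $h\ge0$. Replace $h$ by $|h|=(|h_v|)_v$. This keeps the energy unchanged, and $H_{|h|}\ge H_h$ pointwise, so $\Psi_A(|h|)\ge \Psi_A(h)$ because $g_A$ is increasing. Hence the relaxed infimum may be taken over $h\ge0$. Let $x:=I_A(p)$. Strict monotonicity and $p>\nu(A)=g_A(0)$ give $x>0$ with $g_A(x)=p$.

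\textbf{Part (i), $0<\alpha\le1$.} Take $h\ge0$ with $\Psi_A(h)\ge p$. For each child $i\in\{1,\dots,b\}$ put $t_i:=\operatorname{ess\,sup}_{\xi\in[i]}H_h(\xi)$, which may be infinite. For any ray $\xi\in[i]$, subadditivity of $s\mapsto s^\alpha$ gives
\[
\sum_{v\succeq i}h_v^\alpha\ \ge\ \sum_{k\ge1}h_{\xi|k}^\alpha\ \ge\ H_h(\xi)^\alpha .
\]
Hence $\sum_{v\succeq i}h_v^\alpha\ge t_i^\alpha$, and in particular each $t_i$ is finite when the energy is finite. Since $g_A$ is increasing, $b^{-1}\int_{[i]}g_A(H_h)\,d\mathbf m_b\cdot b\le g_A(t_i)$, so $\frac1b\sum_i g_A(t_i)\ge\Psi_A(h)\ge p$. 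This yields $\mathcal I_\alpha(h)\ge\lambda\sum_i t_i^\alpha$ with $(t_i)$ feasible for \eqref{eq:first-generation}.

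Conversely, the first-generation field $h_i=t_i$ (zero elsewhere) has $\Psi_A(h)=\frac1b\sum_i g_A(t_i)$ and energy $\lambda\sum_i t_i^\alpha$, which gives the reverse inequality. The finite-dimensional minimum is attained: the feasible set is closed, and we may restrict to $t_i\le b^{1/\alpha}x$, since $t_i=x$ for all $i$ is feasible with cost $bx^\alpha$. So we minimize a continuous function over a compact set.

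\textbf{Part (ii), $\alpha\ge1$ with concavity.} For $h\ge0$ we have $H_h\ge0$, so Jensen's inequality for the concave $g_A$ on $[0,\infty)$ together with \eqref{eq:path-L1} gives
\[
p\le\Psi_A(h)\le g_A\Big(\int H_h\,d\mathbf m_b\Big)=g_A(\|h\|_b).
\]
Hence $\|h\|_b\ge x$ by strict monotonicity. It remains to minimize $\lambda\sum_v h_v^\alpha$ subject to $\sum_v b^{-|v|}h_v\ge x$.

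For $\alpha=1$ the best weight is $\max_v b^{-|v|}=b^{-1}$, attained in generation one, so the minimum is $\lambda b x$. For $\alpha>1$, Hölder's inequality (as in the proof of Proposition~\ref{prop:cost-bounds}) with
\[
\sum_v b^{-\alpha'|v|}=\sum_{k\ge1}b^{-k/(\alpha-1)}=\big(b^{1/(\alpha-1)}-1\big)^{-1}
\]
gives the lower bound $\lambda\big(b^{1/(\alpha-1)}-1\big)^{\alpha-1}x^\alpha$. Equality in Hölder holds for $h_v=c\,b^{-|v|/(\alpha-1)}$ with $c$ chosen so that $\|h\|_b=x$. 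This field is constant on each generation, so $H_h\equiv\|h\|_b=x$ identically and $\Psi_A(h)=g_A(x)=p$ exactly. It therefore attains the bound, and for $\alpha=1$ the constant-$x$ first-generation field does the same.

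\textbf{Main obstacle.} I expect the delicate step to be the lower bound in part (i). One has to pass from an arbitrary infinite field to $b$ numbers, and this uses the essential supremum of $H_h$ on each cylinder together with the pathwise subadditivity bound $\sum_{v\succeq i}h_v^\alpha\ge H_h(\xi)^\alpha$. The other essential point is the earlier justification that the equality-constrained $Q_A(p)$ agrees with the inequality-constrained infimum. That identification rests on $Q_A(p+)=Q_A(p)$, which is what Proposition~\ref{cor:no-suboptimal-maxima} supplies.
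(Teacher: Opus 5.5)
Your proposal is correct and follows the paper's proof essentially step by step. The main steps match: continuity of $Q_A$ at $p$ from the absence of local maximizers of $g_A$, passage to the closed constraint and to nonnegative fields, subadditive compression onto the first generation for (i), and Jensen plus H\"older with the geometric field $h_v\propto b^{-|v|/(\alpha-1)}$ for (ii). Your deviations are cosmetic: reflecting to $\sigma=1$, using $|h|$ instead of $\sigma(\sigma h)^+$, and taking $t_i$ as an essential supremum of $H_h$ on $[i]$ rather than the subtree sum $\sum_{v\succeq i}h_v$.

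One small correction to your closing remark. Identifying $Q_A(p)$ with $\inf\{\mathcal{I_{\alpha}}(h):\Psi_A(h)\ge p\}$ needs only the strict monotonicity of $Q_A$ on $D_A\cap[\nu(A),M_A]$ and $Q_A=\infty$ off $D_A$, not $Q_A(p+)=Q_A(p)$.
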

\begin{proof}
We first give a more restricted representation for $Q_A(p)$. Since \(g_A\) has no local maximizer, Proposition~\ref{cor:no-suboptimal-maxima} gives $Q_A(p+)=Q_A(p).$ The monotonicity assumption and \(g_A(0)=\nu(A)<p\) give
\[
  I_A(p)=\min\{t\ge0:g_A(\sigma t)\ge p\},
  \qquad
  g_A(\sigma I_A(p))=p.
\]
Since $Q_A$ is increasing on $[\nu(A),M_A)$, by the definition of $Q_A$ (see \eqref{eq:QA}), we can write
\begin{align}\label{45gtrh6u}
  Q_A(p)=\inf\{\mathcal{I_{\alpha}}(h):h\in\mathcal{H}_b,\ \Psi_A(h)\ge p\}.
\end{align}
Set $x+:=\max\{x,0\}$. For a finite-energy field \(h\) satisfying \(\Psi_A(h)\ge p\), for ${v\in\mathbb{T}_b^{\circ}}$, put
\[
  y_v:=\sigma h_v,
  \quad
  \widetilde h_v:=\sigma y_v^+.
\]
Since \(\lvert\widetilde h_v\rvert\le\lvert h_v\rvert\) and
\(\widetilde h\in\mathcal{H}_b\), it follows that
\begin{equation}\label{eq:energy-truncation}
  \mathcal{I_{\alpha}}(\widetilde h)
  =\lambda\sum_{v\ne\varnothing}(y_v^+)^\alpha
  \le \lambda\sum_{v\ne\varnothing}\lvert y_v\rvert^\alpha
  =\mathcal{I_{\alpha}}(h).
\end{equation}
Let $y=(y_v)_{v\in\mathbb{T}_b}$ and $y^+=(y^+_v)_{v\in\mathbb{T}_b}$ be fields with $y_{\varnothing}=y^+_{\varnothing}=0$. For $\mathbf m_b$-almost every \(\xi\), we have
\[
  H_{\widetilde h}(\xi)=\sigma H_{y^+}(\xi),
  \qquad
  H_h(\xi)=\sigma H_y(\xi),
  \qquad
  H_{y^+}(\xi)\ge H_y(\xi).
\]
Hence,
\begin{align}
  \Psi_A(\widetilde h)
  &=\int_{\partial\mathbb{T}_b}g_A\bigl(\sigma H_{y^+}(\xi)\bigr)\,\mathbf m_b(\mathrm{d}\xi)\cr
  &\ge\int_{\partial\mathbb{T}_b}g_A\bigl(\sigma H_y(\xi)\bigr)\,\mathbf m_b(\mathrm{d}\xi)=\Psi_A(h)\ge p.\nonumber
\end{align}
This, together with \eqref{eq:energy-truncation} and \eqref{45gtrh6u}, implies that we can write
\begin{equation}\label{eq:positive-y}
 Q_A(p)=\inf\left\{\mathcal{I_{\alpha}}(y):y\in\mathcal{H}_b,~y_v\geq0\ \text{for}\ {v\in\mathbb{T}_b},\ \int_{\partial\mathbb{T}_b}g_A\bigl(\sigma H_{y}(\xi)\bigr)\,\mathbf m_b(\mathrm{d}\xi)\ge p\right\}.
\end{equation}
\par
 We are ready to prove (i). Define
\begin{equation}\label{eq:Mp}
  M_p:=\lambda\inf\left\{
      \sum_{i=1}^b t_i^\alpha:
      t_i\ge0,\quad
      \frac1b\sum_{i=1}^b g_A(\sigma t_i)\ge p
    \right\}.
\end{equation}
We first prove $Q_A(p)\geq M_p$. Let \(y\) be feasible for the closed-constraint problem \eqref{eq:positive-y}.
Since \(0<\alpha\le1\), it follows that for any $i=1,...,b$,
\begin{equation}\label{eq:subadditivity}
\Big(\sum_{v\succeq i}y_v\Big)^\alpha
  \le\sum_{v\succeq i}y_v^\alpha<\infty,
\end{equation}
where $v\succeq i$ means $v=i$ or $v$ is a descendant of $i$. This, combined with  $y_v\geq0$, yields that if $\xi|1=i$, then
\begin{equation}\label{eq:ray-bound}
  H_y(\xi)=\sum_{k\ge1}y_{\xi|k}\le \sum_{v\succeq i}y_v<\infty.
\end{equation}
This, combined with the fact that $\mathbf m_b([i])=\frac{1}{b}$, concludes that
\begin{align}\label{eq:compressed-feasible}
\frac1b\sum_{i=1}^b g_A\Big(\sigma t_i\Big)
  &=\frac1b\sum_{i=1}^b g_A\Big(\sigma \sum_{v\succeq i}y_v\Big)\cr
  &\ge\sum^{b}_{i=1}\int_{[i]}g_A\bigl(\sigma H_y(\xi)\bigr)\,
       \mathbf m_b(\mathrm{d}\xi)\cr
  &=\int_{\partial\mathbb{T}_b}g_A\bigl(\sigma H_y(\xi)\bigr)\,
       \mathbf m_b(\mathrm{d}\xi)\geq p,
\end{align}
where $t_i=\sum_{v\succeq i}y_v$, $i=1,...,b$.
It follows that
\begin{equation}\label{eq:lower-bound-i}
  M_p\leq\lambda\sum_{i=1}^bt_i^{\alpha}
  =\lambda\sum_{i=1}^b(\sum_{v\succeq i}y_v)^\alpha
  \le\lambda\sum_{v\ne\varnothing}y_v^\alpha
  =\mathcal{I_{\alpha}}(y).
\end{equation}
This, together with \eqref{eq:positive-y}, yields
\(M_p\leq Q_A(p)\). It remains to prove \(Q_A(p)\leq M_p\).
The feasible set in \eqref{eq:Mp} is nonempty and closed, and its intersection with
$\{(t_1,...,t_b):\sum_{i=1}^b t_i^\alpha\leq L,t_i\ge0\}$ is compact for any $L>0$. Hence the infimum in \eqref{eq:Mp} is attained. Let \((t_1^*,\ldots,t_b^*)\) minimize
\eqref{eq:Mp}, and set
\begin{equation}\label{eq:first-gen-field}
  y_i^*:=t_i^*\quad(\lvert i\rvert=1),
  \qquad
  y_v^*:=0\quad(\lvert v\rvert\ge2).
\end{equation}
Then
\begin{equation}\label{eq:upper-bound-i}
  \int_{\partial\mathbb{T}_b}g_A\bigl(\sigma H_{y^*}(\xi)\bigr)\,
  \mathbf m_b(\mathrm{d}\xi)
  =\frac1b\sum_{i=1}^bg_A(\sigma t_i^*)\ge p,
  \quad
  \mathcal{I_{\alpha}}(y^*)=\lambda\sum_{i=1}^b(t_i^*)^\alpha=M_p.
\end{equation}
This entails that \(Q_A(p)\leq M_p\), and proves \textup{(i)}.
\par
We proceed to prove \textup{(ii)}. Let $y$ be feasible for the closed-constraint problem in \eqref{eq:positive-y}. Observe that by concavity of $g_A(\sigma\cdot)$ on $[0,\infty)$ and Jensen's inequality,
\begin{equation}\label{eq:Jensen}
\begin{aligned}
  p\le \int_{\partial\mathbb{T}_b}g_A\left(\sigma H_y(\xi)\right)\,\mathbf m_b(\mathrm{d}\xi)\leq g_A\left(\sigma\int_{\partial\mathbb{T}_b}H_y(\xi)\,\mathbf m_b(\mathrm{d}\xi)\right).
\end{aligned}
\end{equation}
By using similar arguments in \eqref{eq:path-L1}, it follows that
\begin{equation}\label{eq:weighted-lower}
  \sum_{v\ne\varnothing}b^{-\lvert v\rvert}y_v
  =\int_{\partial\mathbb{T}_b}H_y(\xi)\,\mathbf m_b(\mathrm{d}\xi)
  \ge I_A(p),
\end{equation}
where the inequality follows from \eqref{eq:Jensen} and increasing property of \(g_A(\sigma\,\cdot)\). We first deal with
\(\alpha=1\) in (ii).  Since \(b^{-\lvert v\rvert}\le b^{-1}\), by \eqref{eq:weighted-lower}, it follows that
\begin{align}\label{67i89ou71q}
  Q_A(p)&=\inf\left\{\lambda\sum_{v\ne\varnothing}y_v:
    y\in\mathcal{H}_b,~y_v\geq0\ \text{for}\ {v\in\mathbb{T}_b},\ \int_{\partial\mathbb{T}_b}g_A\bigl(\sigma H_{y}(\xi)\bigr)\,\mathbf m_b(\mathrm{d}\xi)\ge p\right\}\cr
    &\geq\lambda bI_A(p).
\end{align}
Moreover, by setting \(y_v=I_A(p)\) for $|v|=1$ and \(y_v=0\) for $|v|\neq1$, we can show that
$$Q_A(p)\leq\lambda bI_A(p).$$
This, combined with \eqref{67i89ou71q}, concludes the case \(\alpha=1\) in \textup{(ii)}.

\par
We proceed to deal with \(\alpha>1\) in (ii). By H\"older's inequality,
\begin{align}\label{eq:Holder}
  I_A(p)
  \le\sum_{v\ne\varnothing}b^{-\lvert v\rvert}y_v&\le\left(\sum_{v\ne\varnothing}
       (b^{-\lvert v\rvert})^{\alpha/(\alpha-1)}\right)^{(\alpha-1)/\alpha}
       \left(\sum_{v\ne\varnothing}y_v^\alpha\right)^{1/\alpha}\cr
  &=\bigl(b^{1/(\alpha-1)}-1\bigr)^{-(\alpha-1)/\alpha}
       \left(\sum_{v\ne\varnothing}y_v^\alpha\right)^{1/\alpha}.
\end{align}
The two inequalities in \eqref{eq:Holder} can be attained by setting
\[
  \bigl(I_A(p)(1-r)r^{-1}\bigr)^\alpha
  \bigl(b^{-\lvert v\rvert}\bigr)^{\alpha/(\alpha-1)}=y_v^\alpha,
  \qquad r=b^{-1/(\alpha-1)}.
\]
Thus, we can set
\[
  h_v^*=\sigma I_A(p)(1-r)r^{k-1},
  \quad |v|=k,~k\geq1.
\]
For every \(\xi\in\partial\mathbb{T}_b\),
\[
  H_{h^*}(\xi)
  =\sigma I_A(p)(1-r)\sum_{k\geq1}r^{k-1}
  =\sigma I_A(p),
\]
and therefore \(\Psi_A(h^*)=g_A(\sigma I_A(p))=p\). Moreover,
\begin{align*}
  \mathcal{I_{\alpha}}(h^*)
  &=\lambda I_A(p)^\alpha(1-r)^\alpha
    \sum_{k\geq1}b^k r^{\alpha(k-1)}\\
  &=\lambda b(1-r)^{\alpha-1}I_A(p)^\alpha\\
  &=\lambda\bigl(b^{1/(\alpha-1)}-1\bigr)^{\alpha-1}
    I_A(p)^\alpha.
\end{align*}
Together with \eqref{eq:Holder}, this proves the case \(\alpha>1\) in assertion \textup{(ii)}.
\end{proof}
\begin{remark}
For the half-line \(A=(-\infty,a]\) and $\sigma=-1$,
the monotonicity condition in Theorem \ref{thm:dimension-reduction} holds. Moreover, if $a\geq 0$, then the concavity condition in Theorem \ref{thm:dimension-reduction} also holds.
\end{remark}
Next, we give a computable example for the constraint problem \eqref{eq:first-generation}.
Recall that $\Phi$ is the distribution function of a standard normal random variable.
\begin{example}\label{5tre6yy6yu}
Assume \(0<\alpha<1\). Let \(A=(-\infty,a]\) and \(\sigma=-1\). Choose \(p\in(\nu(A),1)\) such that
\begin{align}\label{43thy6yy6}
  s:=b\bigl(p-\Phi(a)\bigr)\in (0,1-\Phi(a)),
  \quad
    t^{*}(a+t^{*})\le 1-\alpha,
\end{align}
where \(t^{*}:=\Phi^{-1}(\Phi(a)+s)-a>0\). Then the minimum in
\eqref{eq:first-generation} is attained, up to
permutation, by
\[
  (t_1,\ldots,t_b)=(t^{*},0,\ldots,0),
\]
and hence
\begin{equation}\label{eq:halfline-computable-formula}
  Q_A(p)
  =\lambda\left[
    \Phi^{-1}\bigl(bp-(b-1)\Phi(a)\bigr)-a
  \right]^\alpha.
\end{equation}
\end{example}
\begin{proof}
Since
\(g_A(-t)=\Phi(a+t)\) for $A=(-\infty,a]$, by Theorem \ref{thm:dimension-reduction} (i), it remains to solve the following finite-dimensional minimization problem:
\begin{align}
  Q_A(p)
  &=\lambda\min\left\{
      \sum_{i=1}^b t_i^\alpha:
      t_i\ge0,\quad
      \frac1b\sum_{i=1}^b\Phi(a+t_i)\ge p
    \right\}\cr
  &=\lambda\min\left\{
      \sum_{i=1}^b t_i^\alpha:
      t_i\ge0,\quad
      \sum_{i=1}^b\bigl[\Phi(a+t_i)-\Phi(a)\bigr]= s
    \right\}\cr
     &=\lambda\min\left\{
      \sum^b_{i=1}\psi(z_i):
      1-\Phi(a)\ge z_i\ge0,\quad
      \sum_{i=1}^bz_i=s
    \right\},
  \label{eq:halfline-first-generation}
\end{align}
where in the last equality, we use
\[
  z_i:=\Phi(a+t_i)-\Phi(a),
  \qquad
  \psi(z):=\left[\Phi^{-1}\bigl(\Phi(a)+z\bigr)-a\right]^\alpha.
\]
Note that the second equality in \eqref{eq:halfline-first-generation} uses $\sum_{i=1}^b\bigl[\Phi(a+t_i)-\Phi(a)\bigr]= s$ instead of $\sum_{i=1}^b\bigl[\Phi(a+t_i)-\Phi(a)\bigr]\geq s$. The reasons are as follows. Since both \(t_i^\alpha\) and
\(\Phi(a+t_i)\) are increasing in \(t_i\), a minimizing vector must satisfy
\(\sum^b_{i=1}z_i=s\): if the inequality were strict, one could continuously
decrease a positive coordinate until equality holds, thereby decreasing the
objective function $\sum_{i=1}^b t_i^\alpha$.
\par
We proceed to minimize \(\sum^b_{i=1}\psi(z_i)\) over
 $z_i\in[0,1-\Phi(a)]$ and \(\sum^b_{i=1}z_i=s\). Writing
\(t=\Phi^{-1}(\Phi(a)+z)-a\), for \(0<z\le s\), we have
\[
  \psi''(z)
  =\frac{\alpha t^{\alpha-2}}{\varphi(a+t)^2}
    \bigl[(\alpha-1)+t(a+t)\bigr].
\]
The function \(t\mapsto t(a+t)\) is convex. Therefore, by \eqref{43thy6yy6}, for
\(0\le t\le t^{*}\),
\[
  t(a+t)
  \le\max\{0,t^{*}(a+t^{*})\}
  \le1-\alpha.
\]
Thus \(\psi''\le0\) on
\((0,s]\), and \(\psi\) is concave on \([0,s]\). This, combined with \(\psi(0)=0\), gives that whenever \(x,y\ge0\) and \(0<x+y\le s\),
\[
  \psi(x)\ge\frac{x}{x+y}\psi(x+y),
  \qquad
  \psi(y)\ge\frac{y}{x+y}\psi(x+y).
\]
Hence,
\[
  \psi(x)+\psi(y)\ge\psi(x+y).
\]
Repeatedly merging two coordinates now yields
\[
  \sum_{i=1}^b\psi(z_i)
  \ge\psi\left(\sum_{i=1}^b z_i\right)
  =\psi(s),
\]
where the inequality can be improved to an equality at \((z_1,\ldots,z_b)=(s,0,\ldots,0)\). This proves
\eqref{eq:halfline-computable-formula}.
\end{proof}
\section{Discussion}\label{rgrthrt45vg}
The common first-generation shift used by Chen and
He~\cite[Section~4.1.3]{ChenHe2019} assigns the displacement
\(\sigma I_A(p)\) to all \(b\) first-generation particles and has energy
\(\lambda bI_A(p)^\alpha\). Under the assumptions of
Theorem~\ref{thm:dimension-reduction}, this value is recovered when
\(\alpha=1\). For \(\alpha>1\), the proof of
Theorem~\ref{thm:dimension-reduction} identifies the minimizing field
\(h_v^*=\sigma I_A(p)(1-r)r^{|v|-1}\), $v\in\mathbb{T}_b^\circ$, where
\(r=b^{-1/(\alpha-1)}\). This gives
\(\displaystyle
Q_A(p)=\lambda\bigl(b^{1/(\alpha-1)}-1\bigr)^{\alpha-1}
I_A(p)^\alpha
\).
Since
\(\bigl(b^{1/(\alpha-1)}-1\bigr)^{\alpha-1}<b\) for $\alpha>1$, the common
first-generation shift is not optimal in this case. For
\(0<\alpha<1\), the half-line example leading to
\eqref{eq:halfline-computable-formula} has, up to permutation, the
minimizing vector \((t^*,0,\ldots,0)\). Thus only one first-generation
particle receives a nonzero displacement, which is inherited by its
descendants. These minimizers also describe the corresponding
large-deviation bounds. For the lower bound, one takes \(q>p\), uses a
finite-generation approximation of a minimizing field for \(Q_A(q)\),
and then lets \(q\downarrow p\). For the upper bound, the energy estimates
\eqref{eq:lower-bound-i} and \eqref{eq:Holder}, together with the merging
argument following \eqref{eq:halfline-first-generation}, show that no
feasible field has smaller tree energy.
\par
The deterministic assumption is substantive. When the maximal offspring
number \(B\) is strictly larger than the minimal offspring number \(b\),
the event can exploit both a displacement and an atypical population
imbalance. The resulting two-scale mechanism in
Chen and He~\cite[Theorem 1.3]{ChenHe2019} is not described by the present boundary
measure $\mathbf m_b$ or by simply replacing \(b\) with a random tree norm. Likewise, the
heavy-tail mechanisms studied by Zhang~\cite{Zhang2022} involve an
exceptional family or displacement and require different normalizations. The symmetry assumption for the step size was used to obtain the same logarithmic rate in both directions. If the two tails have constants \(\lambda_+\) and \(\lambda_-\), then the natural
candidate tree energy is \[
  \sum_{v\neq\varnothing}
  \Big\{
    \lambda_+(h_v^+)^\alpha+
    \lambda_-(h_v^-)^\alpha
  \Big\},
\]
where $x^-:=\max\{-x,0\}$.

\textbf{Acknowledgements.} Shuxiong Zhang is supported by NSF of China (No. 12501176), and by the National Key R\&D Program of
China 2022YFA1006102. Yaping Zhu is the corresponding author. The research of Yaping Zhu is supported by National Natural Science Foundation of China, Tianyuan Mathematics Young Researcher Project (No. 12526540) and the Fundamental Research Funds for the Central Universities.

\vspace{2cm}
	
	 \smallskip		
   \noindent{\bf Shuxiong Zhang:}  School of Mathematics and Statistics, Anhui Normal University, Wuhu, China.

   \noindent{\bf Email:} {\texttt
   	shuxiong.zhang@ahnu.edu.cn}

	\noindent{\bf Yaping Zhu:}  Department of Mathematics, Shanghai University of Finance and Economics, Shanghai, China.
	
	\noindent{\bf Email:} {\texttt
		zhuyaping@mail.sufe.edu.cn}	
\end{document}